\DeclareMathAlphabet{\mathpzc}{OT1}{pzc}{m}{it}
\def\cA{\mathscr{A}}
\def\cB{\mathscr{B}}
\def\cC{\mathscr{C}}
\def\cD{\mathscr{D}}
\def\cE{\mathscr{E}}
\def\cI{\mathscr{I}}
\def\cM{\mathscr{M}}
\def\cP{\mathscr{P}}
\def\cW{\mathscr{W}}
\def\cX{\mathscr{X}}
\def\cY{\mathscr{Y}}
\def\BZ{\mathbb{Z}}
\def\fa{\mathfrak{a}}
\def\fr{\mathfrak{r}}
\def\Ab{\cA\!{\it b}}
\def\adots{\mathinner{\mkern1mu\raise1.0pt\vbox{\kern7.0pt\hbox{.}}\mkern2mu\raise4.0pt\hbox{.}\mkern2mu\raise7.0pt\hbox{.}\mkern1mu}}
\def\cof{\operatorname{cof}}
\def\Coker{\operatorname{Coker}}
\def\dddots{\mathinner{\mkern1mu\raise10.0pt\vbox{\kern7.0pt\hbox{.}}\mkern2mu\raise5.3pt\hbox{.}\mkern2mu\raise1.0pt\hbox{.}\mkern1mu}}
\def\dddotssmall{\mathinner{\mkern1mu\raise7.0pt\vbox{\kern7.0pt\hbox{.}}\mkern-1mu\raise4pt\hbox{.}\mkern-1mu\raise1.0pt\hbox{.}\mkern1mu}}
\def\dim{\operatorname{dim}}
\def\dual{\operatorname{D}}
\def\Ext{\operatorname{Ext}}
\def\fib{\operatorname{fib}}
\def\Fun{\operatorname{Fun}}
\def\H{\operatorname{H}}
\def\Hom{\operatorname{Hom}}
\def\id{\operatorname{id}}
\def\Image{\operatorname{Im}}
\def\invlim{\underset{ \textstyle \longleftarrow }{ \operatorname{ lim }}}
\def\K{\operatorname{K}}
\def\K0{\operatorname{K}_0}
\def\Ker{\operatorname{Ker}}
\def\lim{\operatorname{lim}}
\def\Mod{\operatorname{Mod}}
\def\obj{{\operatorname{obj}}}
\def\opp{\operatorname{o}}
\def\PSL2{\operatorname{PSL}_2}
\def\rad{\operatorname{rad}}
\def\SL2{\operatorname{SL}_2}
\def\Tor{\operatorname{Tor}}
\def\weq{\operatorname{weq}}
\numberwithin{equation}{section}
\renewcommand{\theequation}{\arabic{section}.\arabic{equation}}
\renewcommand{\thesubsection}{\arabic{section}.\roman{subsection}}
\newtheorem{Lemma}{Lemma}[section]
\newtheorem{Theorem}[Lemma]{Theorem}
\newtheorem{Proposition}[Lemma]{Proposition}
\theoremstyle{definition}
\newtheorem{Definition}[Lemma]{Definition}
\newtheorem{Setup}[Lemma]{Setup}
\newtheorem{Construction}[Lemma]{Construction}
\newtheorem{Remark}[Lemma]{Remark}
\newtheorem{bfhpg}[Lemma]{}               
\newtheorem*{bfhpg*}{}
\newenvironment{VarDescription}[1]%
  {\begin{list}{}{%
    \settowidth{\labelwidth}{\textbf{#1:}}%
    \setlength{\leftmargin}{\labelwidth}\addtolength{\leftmargin}{\labelsep}}}%
  {\end{list}}
\newcounter{saveLemma}
\begin{document}

\setlength{\parindent}{0pt}
\setlength{\parskip}{7pt}

\title[Model categories of quiver representations]{Model categories of quiver representations}

\author{Henrik Holm}

\address{Department of Mathematical Sciences, Universitetsparken 5, University of Copenhagen, 2100 Copenhagen {\O}, Denmark} 
\email{holm@math.ku.dk}

\urladdr{http://www.math.ku.dk/\~{}holm/}

\author{Peter J\o rgensen}

\address{School of Mathematics and Statistics,
Newcastle University, Newcastle upon Tyne NE1 7RU, United Kingdom}
\email{peter.jorgensen@ncl.ac.uk}

\urladdr{http://www.staff.ncl.ac.uk/peter.jorgensen}


\keywords{Abelian model categories, chain complexes, cotorsion pairs, Gillespie's Theorem, Hovey's Theorem, $N$-complexes, periodic chain complexes}

\subjclass[2010]{18E30, 18E35, 18G55}


\begin{abstract} 

\medskip
\noindent
Gillespie's Theorem gives a systematic way to construct model category structures on $\mathscr{C}( \mathscr{M} )$, the category of chain complexes over an abelian category $\mathscr{M}$.

\medskip
\noindent
We can view $\mathscr{C}( \mathscr{M} )$ as the category of representations of the quiver $\cdots \rightarrow 2 \rightarrow 1 \rightarrow 0 \rightarrow -1 \rightarrow -2 \rightarrow \cdots$ with the relations that two consecutive arrows compose to $0$.  This is a self-injective quiver with relations, and we generalise Gillespie's Theorem to other such quivers with relations.  There is a large family of these, and following Iyama and Minamoto, their representations can be viewed as generalised chain complexes.  

\medskip
\noindent
Our result gives a systematic way to construct model category structures on many categories.  This includes the category of $N$-periodic chain complexes, the category of $N$-complexes where $\partial^N = 0$, and the category of representations of the repetitive quiver $\mathbb{Z} A_n$ with mesh relations.

\end{abstract}

\maketitle

\setcounter{section}{-1}
\section{Introduction}
\label{sec:introduction}

Gillespie's Theorem permits the construction of model category structures on categories of chain complexes.  We will generalise it to representations of self-injective quivers with relations, which can be viewed as generalised chain complexes by the work of Iyama and Minamoto, see \cite{IM1} and \cite[sec.\ 2]{IM2}.

\smallskip
\subsection{Outline}~\\
\label{subsec:outline}
\vspace{-3.4ex}

Let $\cM$ be an abelian category.  An abelian model category structure on $\cC( \cM )$, the category of chain complexes over $\cM$, consists of three classes of morphisms, $( \fib,\cof,\weq )$, known as fibrations, cofibrations, and weak equivalences, subject to several axioms, see \cite[def.\ 2.1]{H} and \cite[sec.\ I.1]{Q}.  It provides an extensive framework for the construction and manipulation of the localisation $\weq^{-1}\cC( \cM )$, where the morphisms in $\weq$ have been inverted formally.  Some of the localisations thus obtained are of considerable interest, not least the derived category $\cD( \cM )$. 

Hovey's Theorem says that each abelian model category structure can be constructed from two so-called complete, compatible cotorsion pairs, see Theorem \ref{thm:Hovey}.  This motivates Gillespie's Theorem, which takes a hereditary cotorsion pair in $\cM$ and produces two compatible cotorsion pairs in $\cC( \cM )$, see Theorem \ref{thm:Gillespie}.    

Gillespie's Theorem can be viewed as a result on quiver representations since $\cC( \cM )$ is the category of representations of $Q$ with values in $\cM$, where $Q$ is the following self-injective  quiver with relations.
\begin{equation}
\label{equ:Gillespie_quiver}
  \left\{
    \begin{array}{cc}
      \mbox{Quiver:}    & \cdots \xrightarrow{} 2 \xrightarrow{} 1 \xrightarrow{} 0 \xrightarrow{} -1 \xrightarrow{} -2  \xrightarrow{} \cdots \\[1mm]
      \mbox{Relations:} & \mbox{Two consecutive arrows compose to $0$.}
    \end{array}
  \right.
\end{equation}
The notion of self-injectivity is made precise in Paragraph \ref{bfhpg:quivers}.  This paper will generalise Gillespie's Theorem to other self-injective quivers with relations. They form a large family, see for example Equations \eqref{equ:intro_quiver_2} and \eqref{equ:intro_quiver_4} and Section \ref{subsec:other_quivers}.

Let $k$ be a field, $R$ a $k$-algebra, $Q$ a self-injective quiver with relations over $k$, and let $\cX$ be the category of representations of $Q$ with values in ${}_{ R }\!\Mod$, the category of $R$-left-modules.  Our main theorem,  Theorem \ref{thm:main}, takes a hereditary cotorsion pair in ${}_{ R }\!\Mod$ and produces two compatible cotorsion pairs in $\cX$.  It specialises to Gillespie's Theorem for $\cM = {}_{ R }\!\Mod$ if $Q$ is the quiver with relations from \eqref{equ:Gillespie_quiver}.

\bigskip
\subsection{Cotorsion pairs}~\\
\vspace{-2.5ex}

Let $\cY$ be an abelian category.  If $\Gamma$ and $\Delta$ are classes of objects of $\cY$, then we write
\[
  \Gamma^{ \perp } = \{\, Y \in \cY \mid \Ext_{ \cY }^1( C,Y ) = 0 \mbox{ for } C \in \Gamma \,\}
  \;\;,\;\;  
  {}^{ \perp }\Delta = \{\, Y \in \cY \mid \Ext_{ \cY }^1( Y,D ) = 0 \mbox{ for } D \in \Delta \,\}.
\]
\begin{Definition}
\label{def:cotorsion}
Recall the following from the literature.
\begin{enumerate}
\setlength\itemsep{4pt}

  \item  A {\em cotorsion pair} in $\cY$ is a pair $( \Gamma,\Delta )$ of classes of objects of $\cY$ such that $\Gamma = {}^{ \perp }\Delta$ and $\Gamma^{ \perp } = \Delta$, see \cite[p.\ 12]{S}.  A cotorsion pair $( \Gamma,\Delta )$ is determined by each of the classes $\Gamma$ and $\Delta$, because it is equal to $( \Gamma,\Gamma^{ \perp } )$ and to $( {}^{ \perp }\Delta,\Delta )$.

  \item  The cotorsion pair $( \Gamma,\Delta )$ in $\cY$ is {\em complete} if each $Y \in \cY$ permits short exact sequences $0 \xrightarrow{} D \xrightarrow{} C \xrightarrow{} Y \xrightarrow{} 0$ and $0 \xrightarrow{} Y \xrightarrow{} D' \xrightarrow{} C' \xrightarrow{} 0$ with $C,C' \in \Gamma$ and $D,D' \in \Delta$, see \cite[lem.\ 5.20]{GT}.

  \item  The cotorsion pair $( \Gamma,\Delta )$ is {\em hereditary} if $\Gamma$ is closed under kernels of epimorphisms and $\Delta$ is closed under cokernels of monomorphisms, see \cite[lem.\ 5.24]{GT}.
  
  \item  The cotorsion pairs $( \Phi,\Phi^{ \perp } )$ and $( {}^{ \perp }\Psi,\Psi )$ in $\cY$ are {\em compatible} if they satisfy the following conditions, see \cite[sec.\ 1]{G1}.
\medskip
\begin{VarDescription}{Comp2\quad}
\setlength\itemsep{5pt}

  \item[(Comp1)]  $\Ext^1_{ \cY }( \Phi,\Psi ) = 0$.

  \item[(Comp2)]  $\Phi \cap \Phi^{ \perp } = {}^{ \perp }\Psi \cap \Psi$.
  
\end{VarDescription}
\medskip
Condition (Comp1) is equivalent to $\Phi \subseteq {}^{ \perp }\Psi$ and to $\Phi^{ \perp } \supseteq \Psi$.  It is not symmetric in the two cotorsion pairs; their order matters.  

\medskip
\noindent
Note that our definition of compatibility is weaker than Gillespie's from \cite[def.\ 3.7]{G2}, and that his cortorsion pairs $( \tilde{ \cA },\operatorname{dg} \tilde{ \cB } )$ and $( \operatorname{dg} \tilde{ \cA },\tilde{ \cB } )$ in $\cC( \cM )$ from \cite[prop.\ 3.6]{G2} are always compatible in our sense.  Indeed, $\tilde{ \cA } \cap \operatorname{dg} \tilde{ \cB }$ and $\operatorname{dg} \tilde{ \cA } \cap \tilde{ \cB }$ are both equal to the class of split exact complexes with terms in $\cA \cap \cB$.

  \item  Let $( \Gamma,\Delta )$ be a cotorsion pair in $\cY$, and let $\cC$ be a class of objects in $\cY$.  If $\Delta = \cC^{ \perp }$, then we say that $( \Gamma,\Delta )$ is {\em generated} by $\cC$.  If $\Gamma = {}^{ \perp }\cC$, then we say that $( \Gamma,\Delta )$ is {\em cogenerated} by $\cC$.  See \cite[def.\ 5.15]{GT}.

\end{enumerate}
\end{Definition}

For example, if $\cY$ has enough projective objects, then $( \mbox{projective objects},\cY )$ is called the {\em projective cotorsion pair}.  If $\cY$ has enough injective objects, then $( \cY,\mbox{injective objects} )$ is called the {\em injective cotorsion pair}.  These cotorsion pairs are complete and hereditary.  Note that the triangulated version of compatible cotorsion pairs was investigated by Nakaoka under the name concentric twin cotorsion pair, see \cite[def.\ 3.3]{N}.

\bigskip
\subsection{Hovey's Theorem: Abelian model category structures}~\\
\vspace{-2.5ex}

We will not reproduce Hovey's Theorem in full, but rather state the following result, which motivates the interest in compatible cotorsion pairs and dovetails with Gillespie's Theorem.

\begin{Theorem}[{\cite[prop.\ 2.3 and sec.\ 4.2]{G0}, \cite[thm.\ 1.1]{G1}, \cite[thm.\ 2.2]{H}}]
\label{thm:Hovey}
Let $( \Phi,\Phi^{ \perp } )$ and $( {}^{ \perp }\Psi,\Psi )$ be complete, hereditary, compatible cotorsion pairs in the abelian category $\cY$.

There is a class $\cW$ of objects, often referred to as {\em trivial}, characterised by
\begin{align*}
  \cW & = \{\, Y \in \cY \mid \mbox{there is a short exact sequence $0 \xrightarrow{} P \xrightarrow{} F \xrightarrow{} Y \xrightarrow{} 0$ with $P \in \Psi$, $F \in \Phi$} \,\} \\[1mm]
  & = \{\, Y \in \cY \mid \mbox{there is a short exact sequence $0 \xrightarrow{} Y \xrightarrow{} P' \xrightarrow{} F' \xrightarrow{} 0$ with $P' \in \Psi$, $F' \in \Phi$} \,\}.
\end{align*}
Moreover, there is a model category structure on $\cY$ with
\begin{align*}
  \fib
  & =
  \{\,\mbox{epimorphisms with kernel in $\Phi^{ \perp }$}\,\}, \\[1mm]
  \cof
  & =
  \{\,\mbox{monomorphisms with cokernel in ${}^{ \perp }\Psi$}\,\}, \\[1mm]
  \weq
  & =
  \left\{
    \begin{array}{l}
      \mbox{morphisms which factor as a monomorphism with cokernel in $\cW$} \\
      \mbox{followed by an epimorphism with kernel in $\cW$}    	
    \end{array}
  \right\},
\end{align*}
and the localisation $\weq^{ -1 }\cY$ is triangulated.
\end{Theorem}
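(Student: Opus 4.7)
The plan is to derive this from Hovey's original correspondence \cite{H} between abelian model structures and triples $( \cC, \cW, \cF )$ where $\cW$ is thick and both $( \cC \cap \cW, \cF )$ and $( \cC, \cF \cap \cW )$ are complete cotorsion pairs. The natural guess is to set $\cC = {}^{ \perp }\Psi$ and $\cF = \Phi^{ \perp }$, with $\cW$ defined by the first displayed formula. One must then establish three things: (a) the two displayed formulas describe the same class $\cW$; (b) $\Phi = \cC \cap \cW$ and $\Psi = \cF \cap \cW$; (c) $\cW$ is thick. Once these hold, Hovey's theorem delivers the fibrations and cofibrations as stated.

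For (a), given a witnessing sequence $0 \to P \to F \to Y \to 0$ of the first type, I would apply completeness of $( {}^{ \perp }\Psi, \Psi )$ to $F$ to obtain $0 \to F \to P' \to F' \to 0$ with $P' \in \Psi$ and $F' \in {}^{ \perp }\Psi$, then form the pushout along $F \to Y$. Hereditarity of $\Psi$ (closure under cokernels of monomorphisms) makes the middle term of the resulting sequence $0 \to Y \to Q \to F' \to 0$ lie in $\Psi$. A further completion from $(\Phi, \Phi^{ \perp })$ combined with (Comp2) upgrades $F' \in {}^{ \perp }\Psi$ to a representative in $\Phi$. The reverse implication is dual.

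For (b), compatibility gives $\Phi \subseteq {}^{ \perp }\Psi$ and $\Psi \subseteq \Phi^{ \perp }$, while the trivial short exact sequences $0 \to 0 \to F \to F \to 0$ and $0 \to P \to P \to 0 \to 0$ show $\Phi \subseteq \cW$ and $\Psi \subseteq \cW$. Conversely, if $X \in {}^{ \perp }\Psi \cap \cW$, a witnessing sequence $0 \to P \to F \to X \to 0$ satisfies $\Ext^{ 1 }_{ \cY }( X, P ) = 0$ and hence splits, realising $X$ as a direct summand of $F \in \Phi$; since left classes of cotorsion pairs are closed under direct summands, $X \in \Phi$. The dual argument yields $\Psi = \Phi^{ \perp } \cap \cW$.

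The main obstacle is (c): showing $\cW$ is thick. Closure under direct summands is immediate once one observes that $\Phi$ and $\Psi$ are themselves closed under summands. The genuine work is two-out-of-three for short exact sequences $0 \to Y' \to Y \to Y'' \to 0$: one must juggle witnessing sequences for the two assumed members and patch them via completeness of the other cotorsion pair, invoking hereditarity to keep the middle terms of the constructed sequences in $\Phi$ and $\Psi$, and (Comp2) to handle the common intersection. This is the heart of the Hovey--Gillespie machinery and is executed in detail in \cite{G0, G1, H}. With thickness in hand, Hovey's theorem supplies the model structure, and the given description of $\weq$ follows from the general fact that a weak equivalence factors as a trivial cofibration (a monomorphism with cokernel in $\cC \cap \cW = \Phi \subseteq \cW$) followed by a trivial fibration (an epimorphism with kernel in $\cF \cap \cW = \Psi \subseteq \cW$); the converse direction uses two-out-of-three for $\cW$ applied to the mono--epi factorisation. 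Triangulability of the homotopy category is then a standard consequence of the abelian model category formalism.
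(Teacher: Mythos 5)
The paper does not actually prove this statement: it is quoted verbatim from the cited sources, so the only possible comparison is with the arguments of \cite{G0}, \cite{G1}, \cite{H}, and your outline does follow exactly that route (assemble a Hovey triple $\big( {}^{\perp}\Psi, \cW, \Phi^{\perp} \big)$, check $\Phi = {}^{\perp}\Psi \cap \cW$ and $\Psi = \Phi^{\perp} \cap \cW$, defer thickness of $\cW$ and the correspondence itself to the literature). Your step (b) is correct as written.

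There is, however, a concrete problem in your step (a). After completing $F$ with respect to $( {}^{\perp}\Psi, \Psi )$ and pushing out you hold $0 \to Y \to Q \to F' \to 0$ with $Q \in \Psi$ but $F'$ only in ${}^{\perp}\Psi$, and you assert that ``a further completion from $( \Phi, \Phi^{\perp} )$ combined with (Comp2) upgrades $F'$ to $\Phi$''. The natural implementation of this — take $0 \to B \to A \to F' \to 0$ with $A \in \Phi$, $B \in \Phi^{\perp}$, and pull back along $Q \to F'$ — gives $0 \to Y \to E \to A \to 0$ where $E$ is an extension of $Q$ by $B$; hereditarity only puts $B$ in ${}^{\perp}\Psi \cap \Phi^{\perp}$, and (Comp2) says nothing about this class (for $\Phi = $ projectives and $\Psi = $ injectives over a self-injective algebra it is \emph{all} modules, while the core $\Phi \cap \Phi^{\perp} = {}^{\perp}\Psi \cap \Psi$ is just the projective-injectives), so you cannot conclude $E \in \Psi$ and the upgrade stalls. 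The standard repair is to reorder the two completions: complete $F$ with respect to $( \Phi, \Phi^{\perp} )$ to get $0 \to F \to B \to A \to 0$ with $A \in \Phi$, $B \in \Phi^{\perp}$; since $\Phi$ is closed under extensions, $B \in \Phi \cap \Phi^{\perp} = {}^{\perp}\Psi \cap \Psi \subseteq \Psi$ by (Comp2); now push out along $F \to Y$, and hereditarity of $( {}^{\perp}\Psi, \Psi )$ applied to $0 \to P \to B \to Q \to 0$ gives $Q \in \Psi$, so $0 \to Y \to Q \to A \to 0$ is the desired sequence with $A \in \Phi$. (A smaller quibble of the same kind: closure of $\cW$ under direct summands is not ``immediate'' from summand-closure of $\Phi$ and $\Psi$, since a witnessing sequence for $Y \oplus Y'$ does not restrict to one for $Y$; but you delegate thickness to \cite{G0}, \cite{G1}, \cite{H}, where it is proved in full.)
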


We will give an example after recalling Gillespie's Theorem.

\bigskip
\subsection{Gillespie's Theorem: Chain complexes}~\\
\label{subsec:Gillespie}
\vspace{-2.5ex}

Gillespie's Theorem gives a systematic way to construct compatible cotorsion pairs in the category of chain complexes.  It requires the following setup.
\begin{itemize}
\setlength\itemsep{4pt}

  \item  $\cM$ is an abelian category with enough projective and enough injective objects.

  \item  $\cC( \cM )$ is the category of chain complexes over $\cM$.

  \item  For $q \in \BZ$, consider the functors
\[
\vcenter{
\xymatrix
{
  \cM
    \ar[rrr]^{ S_q } &&&
  \cC( \cM ),
    \ar@/_2.0pc/[lll]_{ C_q }
    \ar@/^2.0pc/[lll]^{ K_q }
}
        }
\]
where $S_q$ sends $M$ to the chain complex with $M$ in degree $q$ and zero everywhere else, and  $C_q$ and $K_q$ are given by
\[
  C_q( X ) = \Coker( \partial_{ q+1 }^X )
  \;\;,\;\;
  K_q( X ) = \Ker( \partial_q^X ).
\]
Here $\partial_q^X$ is the $q$th differential of the chain complex $X$.  There are adjoint pairs $( C_q,S_q )$ and $( S_q,K_q )$.

\end{itemize}

The following is Gillespie's Theorem.

\begin{Theorem}[{\cite[thm.\ 3.12 and cor.\ 3.13]{G2}}]
\label{thm:Gillespie}
If $( \cA,\cB )$ is a hereditary cotorsion pair in $\cM$, then there are hereditary, compatible cotorsion pairs $\big( \Phi( \cA ),\Phi( \cA )^{ \perp } \big)$ and $\big( {}^{ \perp }\Psi( \cB ),\Psi( \cB ) \big)$ 
in $\cC( \cM )$, where
\begin{align*}
  \Phi( \cA ) & = \{\, X \in \cC( \cM ) \mid \mbox{If $q \in \BZ$ then $C_q( X ) \in \cA$ and $\H_q( X ) = 0$} \,\}, \\[1mm]
  \Psi( \cB ) & = \{\, X \in \cC( \cM ) \mid \mbox{If $q \in \BZ$ then $K_q( X ) \in \cB$ and $\H_q( X ) = 0$} \,\}.
\end{align*}
\end{Theorem}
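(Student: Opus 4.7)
I would verify four things in turn: that $(\Phi(\cA), \Phi(\cA)^{\perp})$ is a cotorsion pair in $\cC(\cM)$; that $({}^{\perp}\Psi(\cB), \Psi(\cB))$ is a cotorsion pair; that both are hereditary; and that they are compatible.

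For the cotorsion pair statements, the nontrivial direction is $\Phi(\cA) = {}^{\perp}(\Phi(\cA)^{\perp})$; the inclusion $\subseteq$ is automatic. The plan is to exhibit enough test objects in $\Phi(\cA)^{\perp}$ to force any $X \in {}^{\perp}(\Phi(\cA)^{\perp})$ into $\Phi(\cA)$. The natural candidates are the sphere complexes $S_q(B)$ and the disk complexes $D^q(B) = (\, 0 \to B \xrightarrow{\id} B \to 0 \,)$ concentrated in degrees $q$ and $q-1$, for $B \in \cB$. Standard adjunction-based Ext computations using the pairs $(C_q, S_q)$, $(S_q, K_q)$ give, for instance, $\Ext^1_{\cC(\cM)}(X, D^q(B)) \cong \Ext^1_{\cM}(X_{q-1}, B)$; vanishing for all $B \in \cB$ then places each $X_r$ in $\cA$. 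Combining this with the long exact sequence associated to $0 \to S_q(B) \to D^{q+1}(B) \to S_{q+1}(B) \to 0$ and dimension shifting yields acyclicity of $X$ and $C_q(X) \in \cA$. The argument for $({}^{\perp}\Psi(\cB), \Psi(\cB))$ is dual, with test objects built from $A \in \cA$ on the left.

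For hereditariness of $\Phi(\cA)$, take a short exact sequence $0 \to X' \to X \to X'' \to 0$ with $X, X'' \in \Phi(\cA)$. The long exact sequence in homology makes $X'$ acyclic. Applying the snake lemma to the square formed by the degree-$(q+1)$ and degree-$q$ short exact sequences joined by their differentials produces a six-term exact sequence whose connecting map $K_{q+1}(X'') \to C_q(X') \cong K_{q-1}(X')$ vanishes, because the induced inclusion $K_{q-1}(X') \hookrightarrow K_{q-1}(X)$ is injective. Hence $0 \to K_{q+1}(X') \to K_{q+1}(X) \to K_{q+1}(X'') \to 0$ is exact, and closure of $\cA$ under kernels of epimorphisms forces $K_{q+1}(X') \in \cA$, so $X' \in \Phi(\cA)$. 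The argument for $\Psi(\cB)$ is strictly analogous, using closure of $\cB$ under cokernels of monomorphisms.

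For compatibility the crux is condition (Comp1), $\Ext^1_{\cC(\cM)}(\Phi(\cA), \Psi(\cB)) = 0$. Given $X \in \Phi(\cA)$, $Y \in \Psi(\cB)$, and an extension $0 \to Y \to Z \to X \to 0$, the snake-lemma argument above (applied to this new sequence, using that $Z$ is acyclic by the long exact homology) produces short exact sequences $0 \to K_q(Y) \to K_q(Z) \to K_q(X) \to 0$ in $\cM$, each of which splits because $K_q(X) \in \cA$ and $K_q(Y) \in \cB$ force $\Ext^1_{\cM}(K_q(X), K_q(Y)) = 0$; a careful induction on degree assembles compatible cycle-splittings into a chain-level splitting of the original extension. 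For (Comp2), the Ext characterisations together with the previous steps identify both $\Phi(\cA) \cap \Phi(\cA)^{\perp}$ and ${}^{\perp}\Psi(\cB) \cap \Psi(\cB)$ with the class of split exact complexes whose terms lie in $\cA \cap \cB$, as anticipated in the paper's discussion of (Comp2). The main obstacle throughout is the first step: the Ext computation in $\cC(\cM)$ must be set up so that the disk and sphere test objects genuinely pin down $\Phi(\cA)$; once that is done, hereditariness and compatibility both reduce cleanly to the cotorsion pair structure of $(\cA, \cB)$ in $\cM$.
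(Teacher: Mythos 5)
You are proving a theorem the paper itself does not prove: it quotes it from Gillespie and instead establishes the generalisation (Theorem A) by a more abstract route --- five-term exact sequences of derived adjunctions, the identifications $\Phi(\cA)={}^{\perp}\{S_*(\cB)\}$ and $\Psi(\cB)=\{S_*(\cA)\}^{\perp}$, a radical filtration for (Comp1), and the inverse-limit construction behind condition (Seq) for (Comp2). Your plan is a direct chain-level argument in the spirit of Gillespie's original proof, and most of it is workable, indeed in a generality (arbitrary $\cM$ with enough projectives and injectives, no (co)limits needed) where the paper's machinery would not directly apply. Two calibration points: the disk computation $\Ext^1_{\cC(\cM)}(X,D^q(B))\cong\Ext^1_{\cM}(X_{q-1},B)$ is genuine, but for the spheres the bare adjunctions $(C_q,S_q)$, $(S_q,K_q)$ only give $\Hom$-level isomorphisms, since $S_q$ preserves neither projectives nor injectives; what you actually need is the five-term exact sequence of the derived adjunction (the paper's Lemmas \ref{lem:five_term_exact_sequence} and \ref{lem:five_term_exact_sequence_dual}), from which $C_q(X)\in\cA$ and, testing against injective objects of $\cB$, acyclicity follow. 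Your snake-lemma proof of heredity is correct, and your (Comp1) argument (degreewise splitting, then assembling a splitting) is the classical one; just be aware that the ``careful induction on degree'' runs over all of $\BZ$, so it must be organised as a two-sided extension from an arbitrary base degree, correcting at each step with a lift whose obstruction lies in an $\Ext^1_{\cM}$ of an object of $\cA$ into an object of $\cB$.

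The genuine gap is (Comp2). For $X\in\Phi(\cA)\cap\Phi(\cA)^{\perp}$ your previous steps give: $X$ exact, cycles in $\cA$, and, testing against the disks $D^q(A)$ with $A\in\cA$ (which do lie in $\Phi(\cA)$), terms in $\cB$. They do not give cycles in $\cB$, nor splitness, and neither follows ``from the Ext characterisations'': the objects you would want to test against, the spheres $S_q(A)$ with $A\in\cA$, are not exact, hence not in $\Phi(\cA)$, so $X\in\Phi(\cA)^{\perp}$ says nothing about $\Ext^1_{\cC(\cM)}(S_q(A),X)$. The paper's appeal to ``split exact complexes with terms in $\cA\cap\cB$'' in Definition \ref{def:cotorsion}(iv) concerns Gillespie's classes $\operatorname{dg}\tilde{\cB}$, where a null-homotopy condition is built into the definition; for your $\Phi(\cA)^{\perp}$ this must be derived, and it is precisely the point where the paper's generalisation needs condition (Seq) (resolving $S_q(A)$ by objects of $\Phi(\cA)$, Section \ref{sec:Seq}). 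A concrete repair within your toolkit: since $\Sigma X\in\Phi(\cA)$ and $X\in\Phi(\cA)^{\perp}$, we get $\Ext^1_{\cC(\cM)}(\Sigma X,X)=0$; the cone-of-identity sequence $0\to X\to\operatorname{cone}(\id_X)\to\Sigma X\to 0$ is degreewise split and corresponds to the class of $\id_X$ under the identification of degreewise split extensions with homotopy classes $X\to X$, so its splitting forces $X$ to be contractible, hence split exact, and then the cycles are direct summands of the terms and lie in $\cA\cap\cB$; argue dually for ${}^{\perp}\Psi(\cB)\cap\Psi(\cB)$, using that split exact complexes with terms in $\cA\cap\cB$ lie in both cores by (Comp1). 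Finally, your closing assessment that ``the main obstacle throughout is the first step'' has it backwards: pinning down $\Phi(\cA)$ is routine once the five-term sequence is available; the delicate points are (Comp1) for unbounded complexes and, above all, (Comp2).
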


For instance, the projective cotorsion pair $( \cA,\cB ) = (\mbox{projective objects},\cM )$ gives
\begin{equation}
\label{equ:intro11}
    \big( \Phi( \cA ),\Phi( \cA )^{ \perp } \big) = ( \cP,\cP^{ \perp } )
  \;\;,\;\;  
  \big( {}^{ \perp }\Psi( \cB ),\Psi( \cB ) \big) = ( {}^{ \perp }\cE,\cE ),
\end{equation}
where $\cP$ is the class of projective objects in $\cC( \cM )$ and $\cE$ is the class of exact chain complexes.  Note that $\cP^{ \perp } = \cC( \cM )$.  The cotorsion pairs \eqref{equ:intro11} are hereditary and compatible by Gillespie's Theorem.  If $\cM$ is a complete and cocomplete category, then the cotorsion pairs \eqref{equ:intro11} are complete, and then Theorem \ref{thm:Hovey} says that they determine an abelian model category structure on $\cC( \cM )$.  The associated localisation $\weq^{ -1 }\cC( \cM )$ is the derived category $\cD( \cM )$, see \cite[thm.\ 5.3]{G0}.

\bigskip
\subsection{The Main Theorem: Quiver representations}~\\
\label{subsec:Main}
\vspace{-2.5ex}

Our main theorem is a generalisation of Gillespie's Theorem to quiver representations.  It requires the following setup, which we keep in the rest of the introduction.
\begin{itemize}
\setlength\itemsep{4pt}

  \item  $k$ is a field, $R$ is a $k$-algebra, ${}_{ R }\!\Mod$ is the category of $R$-left-modules.
  
  \item  $Q$ is a self-injective quiver with relations over $k$, see Paragraph \ref{bfhpg:quivers}.

  \item  $\cX$ is the category of representations of $Q$ with values in ${}_{ R }\!\Mod$.  If $p \xrightarrow{ \pi } q$ is an arrow in $Q$, then the corresponding homomorphism in $X \in \cX$ is $X_p \xrightarrow{ X_{ \pi } } X_q$.

  \item  For $q$ an element of $Q_0$, the set of  vertices of $Q$, consider the functors
\[
\vcenter{
\xymatrix
{
  {}_{ R }\!\Mod
    \ar[rrr]^{ S_q } &&&
  \cX
    \ar@/_2.0pc/[lll]_{ C_q }
    \ar@/^2.0pc/[lll]^{ K_q }
}
        }
\]
defined by:
\[
  C_q( - ) = \dual\!S \langle q \rangle \underset{ Q }{ \otimes } -
  \;\;\;,\;\;\;
  S_q( - ) = S \langle q \rangle \underset{ k }{ \otimes } -
  \;\;\;,\;\;\;
  K_q( - ) = \Hom_Q( S \langle q \rangle,- ).
\]
Here $S \langle q \rangle$ is the simple representation of $Q$
supported at $q$.  Its dual $\dual\!S \langle q \rangle = \Hom_k( S \langle q \rangle,k )$ is the simple representation of the opposite quiver $Q^{ \opp }$ supported at $q$.  The symbols $\underset{ Q }{ \otimes }$ and $\Hom_Q$ denote the tensor product and homomorphism functors of representations of $Q$.  Note that $S_q$ sends $M$ to the representation with $M$ at vertex $q$ and zero everywhere else.  There are adjoint pairs $( C_q,S_q )$ and $( S_q,K_q )$.

\end{itemize}

Our main theorem is the following.

\begingroup
\setcounter{saveLemma}{\value{Lemma}}
\setcounter{Lemma}{0}
\renewcommand{\theLemma}{\Alph{Lemma}}
\begin{Theorem}
\label{thm:main}
If $( \cA,\cB )$ is a hereditary cotorsion pair in ${}_{ R }\!\Mod$, then there are hereditary, compatible cotorsion pairs $\big( \Phi( \cA ),\Phi( \cA )^{ \perp } \big)$ and $\big( {}^{ \perp }\Psi( \cB ),\Psi( \cB ) \big)$ in $\cX$, the category of representations of $Q$ with values in ${}_{ R }\!\Mod$, where
\begin{align*}
  \Phi( \cA ) & = \{\, X \in \cX \mid \mbox{If $q \in Q_0$ then $C_q( X ) \in \cA$ and $L_1C_q( X ) = 0$} \,\}, \\[1mm]
  \Psi( \cB ) & = \{\, X \in \cX \mid \mbox{If $q \in Q_0$ then $K_q( X ) \in \cB$ and $R^1K_q( X ) = 0$} \,\}.
\end{align*}
\end{Theorem}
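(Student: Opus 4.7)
The plan is to adapt Gillespie's proof template to the setting of a general self-injective quiver with relations. In Gillespie's argument, the special structure of $\cC(\cM)$ (suspension, truncation, mapping cones) is used to control extensions between the relevant classes. The replacement here is the triple of functors $C_q$, $S_q$, $K_q$ linked by the two adjunctions $(C_q, S_q)$ and $(S_q, K_q)$, together with the self-injectivity of $Q$, which supplies a duality between ``cokernel'' and ``kernel'' data analogous to the shift in chain complexes.

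The first step is an Ext calculation. Applying the derived adjunctions and the associated Grothendieck spectral sequences, the vanishing $L_1 C_q(X) = 0$ should give an isomorphism
\[
\Ext^1_{\cX}(X, S_q(M)) \;\cong\; \Ext^1_R(C_q(X), M),
\]
and dually the vanishing $R^1 K_q(X) = 0$ should give
\[
\Ext^1_{\cX}(S_q(M), X) \;\cong\; \Ext^1_R(M, K_q(X)).
\]
From these one reads off that $\Ext^1_{\cX}(\Phi(\cA), S_q(B)) = 0$ for every $B \in \cB$, and symmetrically $\Ext^1_{\cX}(S_q(A), \Psi(\cB)) = 0$ for every $A \in \cA$.

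To turn this into genuine cotorsion pairs, I would argue that $(\Phi(\cA), \Phi(\cA)^{\perp})$ is cogenerated by a set, namely the family $\{S_q(B)\}$ as $q$ ranges over $Q_0$ and $B$ ranges over a set of generators for the cotorsion pair $(\cA, \cB)$ in ${}_{R}\!\Mod$. The Eklof--Trlifaj lemma then simultaneously delivers the equality $\Phi(\cA) = {}^{\perp}(\Phi(\cA)^{\perp})$ and the completeness of the cotorsion pair. Heredity of $\Phi(\cA)$ follows from the long exact sequence of left derived functors $L_{\ast} C_q$ combined with heredity of $(\cA, \cB)$. The second cotorsion pair $({}^{\perp}\Psi(\cB), \Psi(\cB))$ is handled dually, using $K_q$ in place of $C_q$ and right derived functors in place of left derived.

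The main obstacle will be the compatibility condition. For (Comp1), given $X \in \Phi(\cA)$ and $Y \in \Psi(\cB)$, the plan is to build a filtration of $Y$ whose consecutive quotients are of the form $S_q(B)$ with $B \in \cB$, so that the vanishing $\Ext^1_{\cX}(X, S_q(B)) = 0$ transfers via an Eklof-type lemma to $\Ext^1_{\cX}(X, Y) = 0$. Constructing such a filtration is where self-injectivity of $Q$ should enter crucially: it ought to ensure the existence of short exact sequences in $\cX$ which ``peel off'' copies of $S_q(K_q Y)$ from $Y$ while keeping the remainder inside $\Psi(\cB)$. For (Comp2), the plan is to identify both $\Phi(\cA) \cap \Phi(\cA)^{\perp}$ and ${}^{\perp}\Psi(\cB) \cap \Psi(\cB)$ with the same ``split'' subclass of $\cX$, namely the representations that decompose as sums of elementary pieces $S_q(M)$ with $M \in \cA \cap \cB$, where the decomposition is produced from the projective-injective representations guaranteed by the self-injectivity of $Q$.
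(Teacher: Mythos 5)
Your opening Ext computation does match the paper's starting point (the five-term exact sequences of Lemmas \ref{lem:five_term_exact_sequence} and \ref{lem:five_term_exact_sequence_dual}), but the proposal breaks down exactly where the real work lies, namely compatibility. For (Comp1) you filter the wrong variable: you propose to filter $Y \in \Psi( \cB )$ by pieces $S_q( B )$ and invoke ``an Eklof-type lemma''. Eklof's lemma transfers $\Ext^1$-vanishing along transfinite filtrations of the \emph{first} argument only; for infinite filtrations of the second argument there is no such lemma (one runs into derived-limit obstructions), and the filtrations you would need are genuinely infinite already for the quiver \eqref{equ:Gillespie_quiver}, where $Y$ is an unbounded exact complex with cycles in $\cB$. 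The paper's Proposition \ref{pro:self_injective_Comp1} does the opposite: since every $X \in \Phi( \cA )$ satisfies $L_1C_q( X ) = 0$ for all $q$, it is flat as an object of ${}_{ Q }\!\Mod$ (Proposition \ref{pro:self-injective_Ex} --- this is where self-injectivity enters, via flat $=$ projective $=$ injective), so tensoring the \emph{finite} radical filtration $0 = \rad_Q^N \subsetneq \cdots \subsetneq \rad_Q^0 = Q(-,-)$ with $X$ gives a finite filtration of $X$ with quotients built from $S_qC_p( X )$, $C_p( X ) \in \cA$; this puts the filtration in the first variable, where the induction is legitimate. For (Comp2) your identification of the two cores is false: any object of $\Phi( \cA ) \cap \Phi( \cA )^{ \perp }$ lies in $\Phi( \cA )$, hence satisfies $L_1C_q = 0$ for all $q$, whereas a nonzero direct sum of objects $S_q( M )$ never does (for chain complexes these are non-exact stalk complexes); the correct analogue of ``split exact complexes with entries in $\cA \cap \cB$'' is built from the projective--injective objects $P\langle q \rangle \underset{ k }{ \otimes } M$, not from simples. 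More importantly, nothing in your outline replaces the actual content of the paper's compatibility proof: the construction, for each $q$ and $B \in \cB$, of a short exact sequence $0 \rightarrow W \rightarrow T \rightarrow S_q( B ) \rightarrow 0$ with $T \in \Psi( \cB )$ and $\Ext^2_{ \cX }( \cE,W ) = 0$ (condition (Seq) of Definition \ref{def:ExSeq}), obtained as a Mittag-Leffler inverse limit and controlled by the Lukas lemma (Proposition \ref{pro:25}); this is what allows Theorem \ref{thm:CTCP} to prove ${}^{ \perp }\Psi( \cB ) \cap \cE = \Phi( \cA )$ and $\Phi( \cA )^{ \perp } \cap \cE = \Psi( \cB )$.

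Two further gaps. First, Eklof--Trlifaj is both unavailable and unnecessary for existence: Theorem \ref{thm:main} does not assume $( \cA,\cB )$ is generated by a set, and even when it is, Eklof--Trlifaj yields completeness only for the pair generated by the set $\{ S_q( A ) \}$ (the $\Psi$ side, as in Theorem \ref{thm:self-injective_complete}(i)); it says nothing about the pair cogenerated by $\{ S_q( B ) \}$, whose completeness the paper only gets under extra hypotheses, and completeness is not claimed in Theorem \ref{thm:main} anyway. What you actually need is the identification $\Phi( \cA ) = {}^{ \perp }\{ S_q( B ) \mid q \in Q_0,\ B \in \cB \}$; your displayed isomorphisms give only the inclusion $\subseteq$, since they are stated under the hypothesis $L_1C_q( X ) = 0$, and the converse inclusion requires the separate argument of Lemma \ref{lem:perps}, using an injective object of $\cB$ to kill the $\Ext^2_{ \cM }( C_qX,N )$ term in the five-term sequence. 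Second, heredity is not just ``the long exact sequence plus heredity of $( \cA,\cB )$'': to deduce $L_1C_q( X' ) = 0$ from $0 \rightarrow X' \rightarrow X \rightarrow X'' \rightarrow 0$ with $X, X'' \in \Phi( \cA )$ you need $L_2C_q( X'' ) = 0$, which again comes from flatness over $Q$ of the objects of $\cE$ (Theorem \ref{thm:hereditary} combined with Proposition \ref{pro:self-injective_Ex}), i.e.\ from self-injectivity, and is not automatic.
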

\setcounter{Lemma}{\value{saveLemma}}
\endgroup

In the body of the paper, we prove the more general Theorem \ref{thm:self-injective_main} where $Q$ is a small $k$-preadditive category, and $\cX$ is the functor category of $k$-linear functors $Q \rightarrow {}_{ R }\!\Mod$.  Paragraph \ref{bfhpg:quivers} explains how a quiver can be viewed as a category, whence Theorem \ref{thm:self-injective_main} specialises to Theorem \ref{thm:main}.  

Theorem \ref{thm:main} specialises to Gillespie's Theorem for $\cM = {}_{ R }\!\Mod$ if $Q$ is the quiver with relations from \eqref{equ:Gillespie_quiver}.  Then $\cX$ is the category of chain complexes over ${}_{ R }\!\Mod$.  A computation shows that the functors $C_q$, $S_q$, $K_q$ specialise to those of Section \ref{subsec:Gillespie}, and that
\[
  L_1 C_q = \H_{ q+1 }
  \;\;,\;\;
  R^1 K_q = \H_{ q-1 },
\]
whence the formulae in Theorem \ref{thm:main} specialise to those in Gillespie's Theorem.  However, Theorem \ref{thm:main} applies to many other quivers with relations.  Following Iyama and Minamoto \cite[def.\ 8]{IM2}, we then think of $L_1 C_q$ and $R^1 K_q$ as generalised homology functors.

To serve as the input for Theorem \ref{thm:Hovey}, the cotorsion pairs $\big( \Phi( \cA ),\Phi( \cA )^{ \perp } \big)$ and $\big( {}^{ \perp }\Psi( \cB ),\Psi( \cB ) \big)$ must be complete.  In the setup of Theorem \ref{thm:Gillespie}, this is indeed true under the conditions that $\cM$ is a complete and cocomplete category and $( \cA,\cB )$ is a complete cotorsion pair, see \cite[thm.\ 2.4]{DY}.  In the more complicated setup of Theorem \ref{thm:main}, we do not have an equally neat result, but we do prove completeness in certain cases, see Theorem \ref{thm:self-injective_complete}.

\bigskip
\subsection{Application: $N$-periodic chain complexes}~\\
\vspace{-2.5ex}
\label{subsec:main2}

Let $N \geqslant 1$ be an integer.  
\begin{itemize}
\setlength\itemsep{4pt}

  \item  In Section \ref{subsec:main2} only, $Q$ is the following quiver with relations. 
\begin{equation}
\label{equ:intro_quiver_2}
  \left\{
    \begin{array}{cc}
      \mbox{Quiver:}    &
\xymatrix
{
  N-1 \ar[r] & N-2 \ar[r] & \cdots \ar[r] & 1 \ar[r] & 0 \ar@/^1.5pc/[llll] \\
}
\\[6mm]
      \mbox{Relations:} & \mbox{Two consecutive arrows compose to $0$}
    \end{array}
  \right.
\end{equation}
This is a self-injective quiver with relations, see Paragraph \ref{bfhpg:quivers}.

\end{itemize}
An object $X \in \cX$ has the form
\[
\xymatrix
{
  X_{ N-1 } \ar^-{ \partial^X_{ N-1 } }[r] & X_{ N-2 } \ar^-{ \partial^X_{ N-2 } }[r] & \cdots \ar[r] & X_1 \ar^-{ \partial^X_1 }[r] & X_0, \ar^{ \partial^X_0 }@/^1.5pc/[llll] \\
}
\]
where two consecutive morphisms compose to $0$.  Hence $\cX$ is the category of $N$-periodic chain complexes over ${}_{ R }\!\Mod$.  This even makes sense for $N = 1$, in which case $X$ is a so-called module with differentiation in the sense of \cite[sec.\ IV.1]{CE}, consisting of an object $X_0 \in {}_{ R }\!\Mod$ and a morphism $X_0 \xrightarrow{ \partial^X_0 } X_0$ squaring to $0$.

For $0 \leqslant q \leqslant N-1$ there is a homology functor $\cX \xrightarrow{ \H_q } {}_{ R }\!\Mod$ defined in an obvious fashion.  
We will use our theory to prove the following.

\begingroup
\setcounter{saveLemma}{\value{Lemma}}
\setcounter{Lemma}{1}
\renewcommand{\theLemma}{\Alph{Lemma}}
\begin{Theorem}
\label{thm:main2}
Let $( \cA,\cB )$ be a hereditary cotorsion pair in ${}_{ R }\!\Mod$.
\begin{enumerate}

	\item  There are hereditary, compatible cotorsion pairs $\big( \Phi( \cA ),\Phi( \cA )^{ \perp } \big)$ and $\big( {}^{ \perp }\Psi( \cB ),\Psi( \cB ) \big)$ in $\cX$, the category of $N$-periodic chain complexes over ${}_{ R }\!\Mod$, where
\begin{align*}
  \Phi( \cA ) & = \{\, X \in \cX \mid \mbox{If $0 \leqslant q \leqslant N-1$ then $\Coker( \partial_q^X ) \in \cA$ and $\H_q( X ) = 0$} \,\}, \\[1mm]
  \Psi( \cB ) & = \{\, X \in \cX \mid \mbox{If $0 \leqslant q \leqslant N-1$ then $\Ker( \partial_q^X ) \in \cB$ and $\H_q( X ) = 0$} \,\}.
\end{align*}

  \item  If $\cA$ is closed under pure quotients and $( \cA,\cB )$ is generated by a set, then the cotorsion pairs in part (i) are complete.

\end{enumerate}
\end{Theorem}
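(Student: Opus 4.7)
The plan is to deduce both parts from the general machinery developed in the body of the paper: part (i) from Theorem \ref{thm:main}, and part (ii) from the completeness result promised as Theorem \ref{thm:self-injective_complete}. The only real work is to pin down what the functors $C_q$, $K_q$ and their first derived functors from Section \ref{subsec:Main} look like for the cyclic quiver $Q$ in \eqref{equ:intro_quiver_2}.

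For part (i), $Q$ is a self-injective quiver with relations, so Theorem \ref{thm:main} applies and delivers two hereditary, compatible cotorsion pairs on $\cX$. It remains to identify the classes $\Phi( \cA )$ and $\Psi( \cB )$. Reading the definitions of $S_q$, $C_q$, $K_q$ off Section \ref{subsec:Main} and interpreting indices modulo $N$, I expect
\[
  S_q( M )_p = \begin{cases} M & p = q, \\ 0 & p \neq q, \end{cases} \qquad C_q( X ) = \Coker\bigl( \partial^X_{q+1} \bigr), \qquad K_q( X ) = \Ker\bigl( \partial^X_q \bigr),
\]
exactly as in the chain complex case of Section \ref{subsec:Gillespie}. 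Using the short explicit projective (respectively injective) resolutions of the simples $S\langle q \rangle$ and $\dual\!S\langle q \rangle$ on this cyclic quiver with mesh relations, I expect the same computation that gives $L_1 C_q = \H_{ q+1 }$ and $R^1 K_q = \H_{ q-1 }$ in Section \ref{subsec:Main} to go through here verbatim, with all indices read mod $N$. Since $q$, $q+1$ and $q-1$ all exhaust $Q_0 = \{ 0, 1, \ldots, N-1 \}$, the vanishing of $L_1 C_q( X )$ (respectively $R^1 K_q( X )$) for every $q$ is equivalent to $\H_q( X ) = 0$ for every $q$, and Theorem \ref{thm:main} then produces $\Phi( \cA )$ and $\Psi( \cB )$ exactly as claimed.

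For part (ii), the hypotheses that $\cA$ be closed under pure quotients and that $( \cA, \cB )$ be generated by a set are precisely the assumptions of Theorem \ref{thm:self-injective_complete}, and completeness of the two cotorsion pairs in $\cX$ will follow by direct invocation. The main obstacle I anticipate is purely combinatorial: one must set up the projective and injective resolutions on the cyclic quiver correctly, keep the cyclic index shifts straight, and dispose of the degenerate case $N = 1$ (where the cycle collapses to a single vertex with a loop, so that $q+1 = q-1 = q$ in $Q_0$) together with the general case. Once the identifications $L_1 C_q = \H_{ q+1 }$ and $R^1 K_q = \H_{ q-1 }$ are in place, everything else is a matter of unwinding the statements of Theorems \ref{thm:main} and \ref{thm:self-injective_complete}.
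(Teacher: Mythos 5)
Your proposal follows essentially the same route as the paper: part (i) is deduced from the general theorem for self-injective quivers with relations after identifying $C_q(X)=\Coker(\partial^X_{q+1})$, $K_q(X)=\Ker(\partial^X_q)$ via adjunction to $S_q$ and computing $L_1C_q=\H_{q+1}$, $R^1K_q=\H_{q-1}$ (indices mod $N$) from the minimal projective resolutions of the simples on the cyclic quiver, which is exactly the paper's Lemma \ref{lem:main2_functors}; part (ii) is, as in the paper, a direct invocation of Theorem \ref{thm:self-injective_complete} (noting that the $N$-cycle is a finite quiver, so both halves of that theorem apply). No gaps beyond the computations you already indicate how to carry out.
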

\setcounter{Lemma}{\value{saveLemma}}
\endgroup

This applies to the so-called flat cotorsion pair $( \cA,\cB ) = ( \mbox{flat modules},\mbox{cotorsion modules} )$: Heredity holds by \cite[thm.\ 8.1(a)]{GT}, the class of flat modules is easily seen to be closed under pure quotients, and generation by a set holds by \cite[prop.\ 2]{BBE} (in which ``cogenerated'' means the same as our ``generated'').  Hence Theorem \ref{thm:main2} provides an $N$-periodic version of Gillespie's result for chain complexes from \cite{G2} (see theorem 3.12 and corollaries 3.13, 4.10, 4.18 in that paper).  Theorem \ref{thm:main2} also applies to the injective cotorsion pair $( \cA,\cB ) = ( {}_{ R }\!\Mod,\mbox{injective modules} )$.

\bigskip
\subsection{Application: $\BZ A_3$ with mesh relations}~\\
\vspace{-2.5ex}
\label{subsec:ZA_3}

The following is a slightly more complicated example.
\begin{itemize}
\setlength\itemsep{4pt}

  \item  In Section \ref{subsec:ZA_3} only, $Q$ is the repetitive quiver $\BZ A_3$ modulo the mesh relations.  That is, $Q$ is
\begin{equation}
\label{equ:intro_quiver_4}
  \vcenter{
  \xymatrix @+0.5pc @!0 {
    & (3,2) \ar[dr] && (2,2) \ar[dr] && (1,2) \ar[dr] && (0,2) \ar[dr] && (-1,2) \\
    \cdots && (2,1) \ar[dr] \ar[ur] && (1,1) \ar[dr] \ar[ur] && (0,1) \ar[dr] \ar[ur] && (-1,1) \ar[dr] \ar[ur] && \cdots \\
    & (2,0) \ar[ur] && (1,0) \ar[ur] && (0,0) \ar[ur] && (-1,0) \ar[ur] && (-2,0) \\
                        }
          }
\end{equation}
modulo the relations that each composition of the form $\vcenter{ \xymatrix @-0.9pc @!0 {& *{\circ} \ar[dr] \\ *{\circ} \ar[ur] && *{\circ} } }$ or $\vcenter{ \xymatrix @-0.9pc @!0 {*{\circ} \ar[dr] && *{\circ} \\ & *{\circ} \ar[ur] } }$, which starts and ends on the edge of the quiver, is zero, and that each square of the form $\vcenter{ \xymatrix @-0.9pc @!0 {& *{\circ} \ar[dr] \\ *{\circ} \ar[ur] \ar[dr] && *{\circ} \\ & *{\circ} \ar[ur] } }$ is anticommutative.  This is a self-injective quiver with relations, see Paragraph \ref{bfhpg:quivers}.

\end{itemize}

For $j \in \BZ$, the mesh relations imply that there are short chain complexes
\begin{align}
\nonumber
  & X_{ (j,0) } \xrightarrow{} X_{ (j,1) } \xrightarrow{} X_{ (j-1,0) }, \\[1mm]
\label{equ:ThmBComplexes}
  & X_{ (j,1) } \xrightarrow{} X_{ (j-1,0) } \oplus X_{ (j,2) } \xrightarrow{} X_{ (j-1,1) }, \\[1mm]
\nonumber
  & X_{ (j,2) } \xrightarrow{} X_{ (j-1,1) }\xrightarrow{} X_{ (j-1,2) }.
\end{align}
We will use our theory to prove the following.

\begingroup
\setcounter{saveLemma}{\value{Lemma}}
\setcounter{Lemma}{2}
\renewcommand{\theLemma}{\Alph{Lemma}}
\begin{Theorem}
\label{thm:main3}
If $( \cA,\cB )$ is a hereditary cotorsion pair in ${}_{ R }\!\Mod$, then there are hereditary, compatible cotorsion pairs $\big( \Phi( \cA ),\Phi( \cA )^{ \perp } \big)$ and $\big( {}^{ \perp }\Psi( \cB ),\Psi( \cB ) \big)$ in $\cX$, the category of representations of $Q$ with values in ${}_{ R }\!\Mod$, where
\begin{align*}
  \Phi( \cA ) & =
  \left\{
    \begin{array}{c|c}
      X \in \cX &
      \begin{array}{l}
        \mbox{If $j \in \BZ$ then each of the following cokernels is in $\cA$:} \\[2mm]
        \Coker( X_{ (j,1) } \xrightarrow{} X_{ (j-1,0) }  ), \\[1mm]
        \Coker( X_{ (j,0) } \oplus X_{ (j+1,2) } \xrightarrow{} X_{ (j,1) } ), \\[1mm]
        \Coker( X_{ (j,1) } \xrightarrow{} X_{ (j,2) } ), \\[2mm]
		\mbox{and each of the short chain complexes \eqref{equ:ThmBComplexes} is exact}
      \end{array}
    \end{array}
  \right\}, \\[2mm]
  \Psi( \cB ) & = 
  \left\{
    \begin{array}{c|c}
      X \in \cX &
      \begin{array}{l}
        \mbox{If $j \in \BZ$ then each of the following kernels is in $\cB$:} \\[2mm]
        \Ker( X_{ (j,0) } \xrightarrow{} X_{ (j,1) } ), \\[1mm]
        \Ker( X_{ (j,1) } \xrightarrow{} X_{ (j-1,0) } \oplus X_{ (j,2) } ), \\[1mm]
        \Ker( X_{ (j,2) } \xrightarrow{} X_{ (j-1,1) } ), \\[2mm]
		\mbox{and each of the short chain complexes \eqref{equ:ThmBComplexes} is exact}
      \end{array}
    \end{array}
  \right\}.
\end{align*}
\end{Theorem}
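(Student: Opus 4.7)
The plan is to apply Theorem~\ref{thm:main} to the quiver with relations $Q = \BZ A_3$. One first verifies that $Q$ is self-injective in the sense of Paragraph~\ref{bfhpg:quivers}; this is standard for mesh categories of Dynkin type. Granting this, Theorem~\ref{thm:main} produces hereditary, compatible cotorsion pairs $\bigl( \Phi(\cA), \Phi(\cA)^{\perp} \bigr)$ and $\bigl( {}^{\perp}\Psi(\cB), \Psi(\cB) \bigr)$ in $\cX$, described abstractly in terms of the functors $C_q$, $K_q$ and their first derived functors. The remaining task is to translate this abstract description into the explicit kernels, cokernels, and exactness conditions of the theorem.

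The description of $K_q$ and $C_q$ at each vertex is essentially formal. From $K_q(X) = \Hom_Q(S\langle q\rangle, X)$ one reads off the kernel of the map from $X_q$ into the direct sum of $X_{q'}$ over arrows $q \to q'$; from $C_q(X) = \dual\!S\langle q\rangle \underset{Q}{\otimes} X$ one reads off the cokernel of the map from the direct sum of $X_{q''}$ over arrows $q'' \to q$ into $X_q$. Inspecting the arrows of $\BZ A_3$ --- one arrow in and one arrow out at the outer vertices $(j,0)$ and $(j,2)$, two arrows in and two out at the middle vertex $(j,1)$ --- then recovers exactly the kernels listed in $\Psi(\cB)$ and the cokernels listed in $\Phi(\cA)$.

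The main step, and the principal obstacle, is identifying $L_1 C_q(X)$ and $R^1 K_q(X)$ with the homology at the middle term of the mesh complexes~\eqref{equ:ThmBComplexes}. For a mesh category, the simple $S\langle q\rangle$ admits a length-two projective resolution $P_{\widetilde q} \to \bigoplus_w P_w \to P_q \to S\langle q\rangle \to 0$, where $\widetilde q$ is the vertex paired with $q$ under the Auslander--Reiten translate and $w$ runs over the intermediate vertices of the mesh ending at $q$; the first differential is the sum of the mesh arrows, with the appropriate signs that encode anticommutativity at the middle row. Computing $\Tor_1^Q(\dual\!S\langle q\rangle, X)$ and $\Ext_Q^1(S\langle q\rangle, X)$ from this resolution and examining the middle term, one finds in each case that the first derived functor vanishes precisely when the corresponding complex in~\eqref{equ:ThmBComplexes} is exact; self-injectivity of $Q$ is exactly what ensures that the same three families of complexes govern both $L_1 C_q$ and $R^1 K_q$. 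Collating the conditions over the three vertex classes yields the stated descriptions of $\Phi(\cA)$ and $\Psi(\cB)$, and hence the theorem.
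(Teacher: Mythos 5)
Your proposal follows essentially the same route as the paper: invoke self-injectivity of the mesh category (Paragraph \ref{bfhpg:quivers}), apply Theorem \ref{thm:main} (i.e.\ Theorem \ref{thm:self-injective_main}), identify $C_q$ and $K_q$ as the explicit cokernels and kernels via the adjunctions (the paper's Lemma \ref{lem:ZA_3_CjKj}), and compute the first derived functors from the beginning of the minimal projective resolutions of the simples, which the mesh relations force to be $P\langle j{-}1,\ell\rangle \to (\text{middle projectives}) \to P\langle j,\ell\rangle \to S\langle j,\ell\rangle \to 0$ (the paper's Lemma \ref{lem:ZA_3_RiKj}). Two small imprecisions: first, there is no length-two projective resolution of $S\langle q\rangle$ --- over a self-injective, non-semisimple category a simple of finite projective dimension would be projective --- but this is harmless, since only the truncation $P_2 \to P_1 \to P_0 \to S\langle q\rangle \to 0$ of the (infinite) minimal resolution is needed to compute $\Ext^1_Q$ and $\Tor^Q_1$. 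Second, $L_1C_q(X) = \Tor^Q_1(\dual\!S\langle q\rangle, X)$ cannot be computed ``from this resolution'': it requires a projective resolution of $\dual\!S\langle q\rangle$ in $\Mod_Q$, whose first terms are the incoming meshes; these give complexes from the same three families \eqref{equ:ThmBComplexes} up to a shift in $j$, so your conclusion is right, but the paper sidesteps this computation altogether by using Proposition \ref{pro:self-injective_Ex} ($\cE_L = \cE_R$, which is where self-injectivity officially enters) so that the single computation of $R^1K_q$ suffices for both $\Phi(\cA)$ and $\Psi(\cB)$; you should either do the $\Mod_Q$-side computation honestly or cite that proposition.
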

\setcounter{Lemma}{\value{saveLemma}}
\endgroup

\bigskip
\subsection{Other self-injective quivers with relations}~\\
\vspace{-2.5ex}
\label{subsec:other_quivers}

There are many other self-injective quivers with relations to which Theorem \ref{thm:main} can be applied, for instance $\cdots \xrightarrow{} 2 \xrightarrow{} 1 \xrightarrow{} 0 \xrightarrow{} -1 \xrightarrow{} -2  \xrightarrow{} \cdots$ with the relations that $N$ consecutive arrows compose to $0$.  Then $\cX$ is the category of $N$-complexes over ${}_{ R }\!\Mod$ in the sense of \cite[def.\ 0.1]{K}.  Other possibilities are $\BZ A_n$ with mesh relations, the quiver with relations of a finite dimensional self-injective $k$-algebra, and quivers with relations of repetitive algebras, see \cite[sec.\ 3.1]{LP} and \cite[sec.\ 2]{Sch}.

\bigskip
\subsection{An observation on the model category literature}~\\
\vspace{-2.5ex}

Observe that Theorem \ref{thm:main} does not assume the existence of a model category structure on ${}_{ R }\!\Mod$.  This is in contrast to several results from the literature, where a model category structure on a functor category $\Fun( \cI,\cM )$ is induced by a model category structure on $\cM$.  If $\cI$ is a small category, then such results exist when $\cM$ has a cofibrantly generated or combinatorial model category structure, see \cite[thm.\ 11.6.1]{Hirsch} and \cite[prop.\ A.2.8.2]{L}, and when $\cM$ has an arbitrary model category structure and $\cI$ is a direct, an inverse, or a Reedy category, see \cite[thms.\ 5.1.3 and 5.2.5]{HBook}.

\bigskip
\subsection{Contents of the paper}~\\
\vspace{-2.5ex}

Section \ref{sec:Phi_Psi} defines the cotorsion pairs $\big( \Phi( \cA ),\Phi( \cA )^{ \perp } \big)$ and $\big( {}^{ \perp }\Psi( \cB ),\Psi( \cB ) \big)$ in an abstract setup, and shows that they are hereditary and compatible under certain assumptions.  Section \ref{sec:functor_category} introduces functor categories.  Section \ref{sec:self-injective} proves Theorem \ref{thm:self-injective_main}, which has Theorem \ref{thm:main} as a special case.  Sections \ref{sec:Ex}, \ref{sec:Comp1}, and \ref{sec:Seq} provide several results used in the proof of Theorem \ref{thm:self-injective_main}.  Section \ref{sec:main2} proves Theorem \ref{thm:main2}.  Section \ref{sec:ZA_3} proves Theorem \ref{thm:main3}.  Appendix \ref{app:functor} provides additional background on functor categories.

\section{The cotorsion pairs $\big( \Phi( \cA ),\Phi( \cA )^{ \perp } \big)$ and $\big( {}^{ \perp }\Psi( \cB ),\Psi( \cB ) \big)$ in an abstract setup}
\label{sec:Phi_Psi}

This section defines the cotorsion pairs $\big( \Phi( \cA ),\Phi( \cA )^{ \perp } \big)$ and $\big( {}^{ \perp }\Psi( \cB ),\Psi( \cB ) \big)$ in an abstract setup, and shows that they are hereditary and compatible under certain assumptions.  

\begin{Setup}
\label{set:blanket}
Section \ref{sec:Phi_Psi} uses the following setup.
\begin{itemize}
\setlength\itemsep{4pt}

  \item  $\cM$ and $\cX$ are abelian categories with enough projective and enough injective objects.  

  \item  $( \cA,\cB )$ is a cotorsion pair in $\cM$.

  \item  $J$ is an index set.
  
  \item  For each $j \in J$ there are adjoint pairs of functors $( C_j,S_j )$ and $( S_j,K_j )$ as follows.
\[
\vcenter{
\xymatrix
{
\cM
    \ar[rrr]^{ S_j } &&&
  \cX
    \ar@/_2.0pc/[lll]_{ C_j }
    \ar@/^2.0pc/[lll]^{ K_j }
}
        }
\]
Note that this implies that $S_j$ is exact.
\end{itemize}

\end{Setup}

The following lemma provides a so-called ``five term exact sequence''.  It is classic, but we show the proof because we do not have a reference for the precise statement.

\begin{Lemma}
\label{lem:five_term_exact_sequence}
Let $( C,S )$ be an adjoint pair of functors as follows:
$
  \xymatrix {
  \cM
    \ar[r]<-1ex>_-{ S } &
    \cX.
    \ar[l]<-1ex>_-{ C }
            }
$
Assume that $S$ is exact.  For $N \in \cM$ and $X \in \cX$ there is an exact sequence
\[
  0
  \rightarrow \Ext^1_{ \cM }( CX,N )
  \rightarrow \Ext^1_{ \cX }( X,SN )
  \rightarrow \Hom_{ \cM }( L_1CX,N )
  \rightarrow \Ext^2_{ \cM }( CX,N )
  \rightarrow \Ext^2_{ \cX }( X,SN ).
\]
\end{Lemma}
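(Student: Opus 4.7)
The plan is to exploit the adjunction, together with exactness of $S$, to transfer $\Ext$ computations from $\cX$ to $\cM$, and then to extract the five-term sequence from a standard spectral sequence in low degrees.

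First I would observe that since $S$ is exact, its left adjoint $C$ sends projectives to projectives: if $P \in \cX$ is projective then $\Hom_{\cM}(CP,-) \cong \Hom_{\cX}(P,S-)$ is exact, as a composition of two exact functors.  Thus, for any projective resolution $P_\bullet \to X$ in $\cX$, the complex $CP_\bullet$ consists of projectives in $\cM$, its homology is $L_\bullet CX$, and the adjunction gives an isomorphism of complexes $\Hom_{\cX}(P_\bullet, SN) \cong \Hom_{\cM}(CP_\bullet, N)$.  In particular, $\Ext^i_{\cX}(X, SN)$ is computed by the cohomology of $\Hom_{\cM}(CP_\bullet, N)$.

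Next I would deploy the hypercohomology spectral sequence
\[
  E_2^{p,q} = \Ext^p_{\cM}(L_qCX, N) \;\Longrightarrow\; \Ext^{p+q}_{\cX}(X, SN),
\]
which is the Grothendieck spectral sequence of the composite $\Hom_{\cM}(-,N) \circ C$.  Its existence follows in the standard way from the Cartan--Eilenberg double complex obtained by choosing an injective resolution $N \to I^\bullet$ in $\cM$ and considering $\Hom_{\cM}(CP_\bullet, I^\bullet)$; the hypothesis $CP_i$ projective is exactly what is needed for the columns to compute $\Ext^p_{\cM}(L_q CX, N)$.  The general low-degree exact sequence of a first-quadrant cohomological spectral sequence,
\[
  0 \to E_2^{1,0} \to H^1 \to E_2^{0,1} \to E_2^{2,0} \to H^2,
\]
then unpacks to precisely the claimed five-term exact sequence.

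As a more hands-on alternative avoiding spectral sequences altogether, one can pick a short exact sequence $0 \to K \to P \to X \to 0$ with $P$ projective and split the resulting four-term exact sequence $0 \to L_1CX \to CK \to CP \to CX \to 0$ at its middle image $M$ into two short exact sequences.  Applying $\Hom_{\cM}(-,N)$ to both (using projectivity of $CP$ to kill the relevant $\Ext^i$), and $\Hom_{\cX}(-,SN)$ to the original sequence (using projectivity of $P$ together with adjunction to rewrite terms as $\Hom_{\cM}(C(-), N)$), one splices the resulting long exact sequences.  The main obstacle in this route is constructing the final map $\Ext^2_{\cM}(CX, N) \to \Ext^2_{\cX}(X, SN)$ and verifying exactness there; this is handled by a further dimension-shifting step, or by invoking naturality of the connecting homomorphism under $\Hom(M,N) \hookrightarrow \Hom(CK,N)$.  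Once the two short exact sequences are in hand, the remaining verification is routine diagram chasing.
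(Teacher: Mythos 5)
Your main argument is essentially the paper's own proof: both use that $C$ preserves projectives (via the adjunction and exactness of $S$), identify $\Ext^n_{\cX}(X,SN)$ with the derived functors of the composite $\Hom_{\cM}(-,N)\circ C$ through a projective resolution of $X$, and then read off the five-term low-degree exact sequence from the Grothendieck spectral sequence $E_2^{p,q}=\Ext^p_{\cM}(L_qCX,N)\Rightarrow\Ext^{p+q}_{\cX}(X,SN)$, the only difference being that the paper cites Rotman for the spectral sequence while you build it directly from the Cartan--Eilenberg double complex $\Hom_{\cM}(CP_\bullet,I^\bullet)$. This route is complete and correct, so the sketched elementary alternative is not needed.
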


\begin{proof}
Consider the functors $\cX \xrightarrow{ C } \cM \xrightarrow{ B } \Ab$ where $\Ab$ is the category of abelian groups and $B( - ) = \Hom_{ \cM }( -,N )$.

The contravariant functor $B$ is left exact.  If $P \in \cX$ is projective, then $C( P ) \in \cM$ is projective because $\Hom_{ \cM }( CP,- ) \simeq \Hom_{ \cX }\big( P,S(-) \big)$ is an exact functor since $S$ is exact.  In particular, the functor $C$ maps projective objects to right $B$-acyclic objects, that is, objects on which the derived functors $R^{ \geqslant 1 }B$ vanish.

By \cite[thm.\ 10.49]{R} there is a Grothendieck third quadrant spectral sequence
\[
  E_2^{i\ell} = ( R^i B )( L_{\ell} C )X
  \underset{ i }{ \Rightarrow } R^n( BC )X.
\]
If $P$ is a projective resolution of $X$, then
\[
  R^n( BC )X
  \cong \H^n( BCP )
  = \H^n\!\Hom_{ \cM }( CP,N )
  \cong \H^n\!\Hom_{ \cX }( P,SN )
  \cong \Ext^n_{ \cX }( X,SN ).
\]
Hence the spectral sequence is
\[
  E_2^{i\ell} = \Ext^i_{ \cM }( L_{\ell}CX,N )
  \underset{ i }{ \Rightarrow } \Ext^n_{ \cX }( X,SN ).
\]
By \cite[thm.\ 10.33]{R} there is an associated exact sequence, which gives the sequence in the lemma.
\end{proof}

We record the dual without a proof:

\begin{Lemma}
\label{lem:five_term_exact_sequence_dual}
Let $( S,K )$ be an adjoint pair of functors as follows:
$
  \xymatrix {
  \cM
    \ar[r]<1ex>^-{ S } &
    \cX.
    \ar[l]<1ex>^-{ K }
            }
$
Assume that $S$ is exact.  For $N \in \cM$ and $X \in \cX$ there is an exact sequence
\[
  0
  \rightarrow \Ext^1_{ \cM }( N,KX )
  \rightarrow \Ext^1_{ \cX }( SN,X )
  \rightarrow \Hom_{ \cM }( N,R^1KX )
  \rightarrow \Ext^2_{ \cM }( N,KX )
  \rightarrow \Ext^2_{ \cX }( SN,X ).
\]
\end{Lemma}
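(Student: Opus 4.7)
\textbf{Proof plan for Lemma \ref{lem:five_term_exact_sequence_dual}.}

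The plan is to dualise the argument of Lemma \ref{lem:five_term_exact_sequence} by replacing the composition $\cX \xrightarrow{C} \cM \xrightarrow{B} \Ab$ (with $B = \Hom_{\cM}(-,N)$, contravariant and left exact) by the composition $\cX \xrightarrow{K} \cM \xrightarrow{B'} \Ab$, where now $B' = \Hom_{\cM}(N,-)$ is a covariant left exact functor, and using a first-quadrant Grothendieck spectral sequence for this composite.

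First I would verify that $K$ sends injectives in $\cX$ to injectives (hence to right $B'$-acyclic objects) in $\cM$. This uses exactness of $S$: for any injective $I \in \cX$, the adjunction isomorphism $\Hom_{\cM}(-,KI) \cong \Hom_{\cX}(S-,I)$ exhibits $\Hom_{\cM}(-,KI)$ as an exact functor, since $S$ is exact and $\Hom_{\cX}(-,I)$ is exact. Therefore $KI$ is injective in $\cM$. Note $K$ is left exact as a right adjoint, so its right derived functors $R^{\ell}K$ are defined.

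Next I would invoke the Grothendieck spectral sequence for the composite $B' \circ K$ (the cohomological version of \cite[thm.\ 10.49]{R}), which is a first-quadrant spectral sequence
\[
  E_2^{i\ell} = (R^iB')(R^{\ell}K)X = \Ext^i_{\cM}(N,R^{\ell}KX) \;\Longrightarrow\; R^{i+\ell}(B'K)X.
\]
To identify the abutment, take an injective resolution $X \to I^{\bullet}$ in $\cX$; since $K$ preserves injectives, $KI^{\bullet}$ computes $R^n(B'K)X$, and the adjunction gives
\[
  R^n(B'K)X \cong \H^n\Hom_{\cM}(N,KI^{\bullet}) \cong \H^n\Hom_{\cX}(SN,I^{\bullet}) \cong \Ext^n_{\cX}(SN,X).
\]

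Finally I would read off the five-term exact sequence of low-degree terms associated to any first-quadrant spectral sequence (the cohomological form of \cite[thm.\ 10.33]{R}), namely
\[
  0 \to E_2^{1,0} \to H^1 \to E_2^{0,1} \to E_2^{2,0} \to H^2,
\]
which becomes precisely the sequence in the statement. There is no real obstacle here beyond being careful that the relevant spectral sequence is first-quadrant (cohomological) rather than third-quadrant, so that the resulting five-term sequence starts with $\Ext^1_{\cM}(N,KX)$ and ends at $\Ext^2_{\cX}(SN,X)$ as asserted.
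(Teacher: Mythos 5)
Your proposal is correct and is exactly the argument the paper intends: the paper records this lemma without proof as the dual of Lemma \ref{lem:five_term_exact_sequence}, and your proof is precisely that dualisation (covariant Grothendieck spectral sequence for $\Hom_{\cM}(N,-)\circ K$, with $K$ preserving injectives because $\Hom_{\cM}(-,KI)\cong\Hom_{\cX}(S-,I)$ is exact, and the abutment identified via the adjunction). Nothing further is needed.
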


The following is well known.

\begin{Lemma}
\label{lem:hereditary}
A cotorsion pair $( \cA,\cB )$ in $\cM$ is hereditary if and only if $\cA$ is resolving, that is, contains the projective objects and is closed under kernels of epimorphisms.
\end{Lemma}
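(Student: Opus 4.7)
The plan is to verify the two implications separately, with the forward direction being essentially free and the backward direction relying on a single dimension-shifting argument.

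First I would dispense with the direction hereditary $\Rightarrow$ resolving. By definition of hereditary, $\cA$ is already closed under kernels of epimorphisms, so only the containment of projectives needs checking. But for \emph{any} cotorsion pair $( \cA, \cB )$ and any projective $P \in \cM$, one has $\Ext^1_{ \cM }( P, B ) = 0$ for all $B \in \cB$ (in fact for all $B \in \cM$), so $P \in {}^{ \perp }\cB = \cA$. This shows projectives lie in $\cA$ without invoking hereditariness at all.

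For the converse direction, assume $\cA$ is resolving. Since $\cA$ is already closed under kernels of epimorphisms, it remains only to show that $\cB$ is closed under cokernels of monomorphisms. So suppose $0 \to B' \to B \to B'' \to 0$ is a short exact sequence in $\cM$ with $B', B \in \cB$; I want to conclude $B'' \in \cB = \cA^{ \perp }$, that is, $\Ext^1_{ \cM }( A, B'' ) = 0$ for every $A \in \cA$. Applying $\Hom_{ \cM }( A, - )$ yields the long exact sequence
\[
  \cdots \to \Ext^1_{ \cM }( A, B ) \to \Ext^1_{ \cM }( A, B'' ) \to \Ext^2_{ \cM }( A, B' ) \to \cdots,
\]
in which the first term vanishes since $A \in \cA$ and $B \in \cB$. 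Hence it suffices to show $\Ext^2_{ \cM }( A, B' ) = 0$.

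This is where the resolving hypothesis does the work: choose a short exact sequence $0 \to K \to P \to A \to 0$ with $P$ projective. Since $\cA$ contains the projectives and is closed under kernels of epimorphisms, $K \in \cA$. Dimension shifting then gives $\Ext^2_{ \cM }( A, B' ) \cong \Ext^1_{ \cM }( K, B' ) = 0$, the last vanishing because $K \in \cA$ and $B' \in \cB$. This forces $\Ext^1_{ \cM }( A, B'' ) = 0$, establishing $B'' \in \cB$ and completing the proof. There is no real obstacle here; the only minor subtlety is remembering that containment of projectives in $\cA$ is automatic for any cotorsion pair, so the resolving hypothesis genuinely reduces to closure under kernels of epimorphisms, which is exactly what powers the dimension shift.
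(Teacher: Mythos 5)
Your proof is correct and follows essentially the same route as the paper, which simply defers to \cite[lem.\ 5.24]{GT}: the content of that proof is exactly your dimension-shifting argument (using that $\cM$ has enough projectives, as guaranteed by Setup \ref{set:blanket}), together with the observation that projectives automatically lie in ${}^{\perp}\cB = \cA$.
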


\begin{proof}
See \cite[lem.\ 5.24]{GT}, the proof of which works in the present generality.
\end{proof}

\begin{Definition}
\label{def:Phi_Psi}
Let
\begin{align*}
  \cE_L & = \{\, X \in \cX \mid \mbox{If $j \in J$ then $L_1C_j( X ) = 0$} \,\}, \\[1mm]
  \cE_R & = \{\, X \in \cX \mid \mbox{If $j \in J$ then $R^1K_j( X ) = 0$} \,\}.
\end{align*}
If $\cC$ is a class of objects in $\cM$, then let
\begin{align*}
  \Phi( \cC ) & = \{\, X \in \cX \mid 
                       \mbox{If $j \in J$ then $C_j( X ) \in \cC$ and $L_1C_j( X ) = 0$} \,\}, \\[1mm]
  \Psi( \cC ) & = \{\, X \in \cX \mid 
                       \mbox{If $j \in J$ then $K_j( X ) \in \cC$ and $R^1K_j( X ) = 0$} \,\}.
\end{align*}
Note that $\Phi( \cC ) \subseteq \cE_L$ and $\Psi( \cC ) \subseteq \cE_R$.
\end{Definition}

If $\cC$ is a class of objects in $\cM$, then we use the shorthand $\{\, S_*( \cC ) \,\} = \{\, S_j( C ) \mid j \in J, C \in \cC \,\}$.

\begin{Lemma}
\label{lem:perps}
Let $\cC$ be a class of objects in $\cM$.
\begin{enumerate}
\setlength\itemsep{4pt}

  \item  Assume that for each non-zero $M \in \cM$ there is an injective object $I$ which is in $\cC$ and satisfies $\Hom_{ \cM }( M,I ) \neq 0$.  Then $\Phi( {}^{ \perp }\cC ) = {}^{ \perp }\{\, S_*( \cC ) \,\}$.

  \item  Assume that for each non-zero $M \in \cM$ there is a projective object $P$ which is in $\cC$ and satisfies $\Hom_{ \cM }( P,M ) \neq 0$.  Then $\Psi( \cC^{ \perp } ) = \{\, S_*( \cC ) \,\}^{ \perp }$.

\end{enumerate}
\end{Lemma}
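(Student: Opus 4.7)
The plan is to prove both parts via the five term exact sequences of Lemmas \ref{lem:five_term_exact_sequence} and \ref{lem:five_term_exact_sequence_dual}, using the hypotheses on injective (resp.\ projective) objects in $\cC$ precisely to force the $\Ext^2$ term to vanish. I will focus on part (i); part (ii) follows by the evident dualisation using Lemma \ref{lem:five_term_exact_sequence_dual} in place of Lemma \ref{lem:five_term_exact_sequence}.

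For the inclusion $\Phi({}^{\perp}\cC) \subseteq {}^{\perp}\{S_*(\cC)\}$, take $X \in \Phi({}^{\perp}\cC)$, fix $j \in J$ and $N \in \cC$, and inspect the exact sequence from Lemma \ref{lem:five_term_exact_sequence} applied to $(C_j, S_j)$. By definition of $\Phi({}^{\perp}\cC)$, we have $C_j(X) \in {}^{\perp}\cC$, so $\Ext^1_{\cM}(C_j X, N) = 0$, and $L_1 C_j(X) = 0$, so $\Hom_{\cM}(L_1 C_j X, N) = 0$. Exactness then sandwiches $\Ext^1_{\cX}(X, S_j N)$ between two zeros, giving $X \in {}^{\perp}\{S_*(\cC)\}$.

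The reverse inclusion is the more delicate step, and is where the hypothesis of part (i) comes into play. Assume $X \in {}^{\perp}\{S_*(\cC)\}$ and fix $j \in J$. For every $N \in \cC$ the five term sequence immediately yields $\Ext^1_{\cM}(C_j X, N) = 0$, so $C_j(X) \in {}^{\perp}\cC$ without any extra assumption. The remaining task — showing $L_1 C_j(X) = 0$ — is the main obstacle, because a priori the sequence only embeds $\Hom_{\cM}(L_1 C_j X, N)$ into $\Ext^2_{\cM}(C_j X, N)$, which need not vanish for a general $N \in \cC$. Here I invoke the hypothesis: if $L_1 C_j(X)$ were non-zero, pick an injective $I \in \cC$ with $\Hom_{\cM}(L_1 C_j X, I) \neq 0$. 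Since $I$ is injective we get $\Ext^2_{\cM}(C_j X, I) = 0$, so the five term sequence collapses to a short exact sequence
\[
  0 \to \Ext^1_{\cM}(C_j X, I) \to \Ext^1_{\cX}(X, S_j I) \to \Hom_{\cM}(L_1 C_j X, I) \to 0,
\]
and since $I \in \cC$ forces the middle term to be zero by hypothesis on $X$, we obtain $\Hom_{\cM}(L_1 C_j X, I) = 0$, a contradiction. Thus $L_1 C_j(X) = 0$ and $X \in \Phi({}^{\perp}\cC)$.

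Part (ii) is formally dual: one uses Lemma \ref{lem:five_term_exact_sequence_dual} for the adjoint pair $(S_j, K_j)$, the easy direction gives $\Psi(\cC^{\perp}) \subseteq \{S_*(\cC)\}^{\perp}$ directly from the definition, and for the converse the projective $P \in \cC$ supplied by the hypothesis kills $\Ext^2_{\cM}(P, K_j X)$, allowing the same collapse-of-the-five-term-sequence argument to force $R^1 K_j(X) = 0$. The only real creative step is identifying that the compatibility of the $\cC$ with injectives/projectives is exactly what is needed to cut the five term sequence down to a short exact sequence.
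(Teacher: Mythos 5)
Your proposal is correct and follows essentially the same route as the paper: both directions are extracted from the five term exact sequence of Lemma \ref{lem:five_term_exact_sequence}, with the hypothesis on injective objects in $\cC$ used, exactly as in the paper, to kill the $\Ext^2_{\cM}$ term and derive $L_1C_j(X)=0$ by contradiction, and part (ii) handled dually via Lemma \ref{lem:five_term_exact_sequence_dual}. No gaps.
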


\begin{proof}
First note that for $N \in \cC$, $X \in \cX$, $j \in J$, there is an exact sequence
\begin{equation}
\label{equ:five_term_exact_sequence}
  0
  \rightarrow \Ext^1_{ \cM }( C_j X,N )
  \rightarrow \Ext^1_{ \cX }( X,S_j N )
  \rightarrow \Hom_{ \cM }( L_1 C_j X,N )
  \rightarrow \Ext^2_{ \cM }( C_j X,N )
\end{equation}
by Lemma \ref{lem:five_term_exact_sequence}.

Part (i), the inclusion $\subseteq$: Let $X \in \Phi( {}^{ \perp }\cC )$ and $j \in J$ be given.  Then $C_j( X ) \in {}^{ \perp }\cC$ and $L_1 C_j( X ) = 0$ by the definition of $\Phi$.  It follows that for $N \in \cC$, the terms in \eqref{equ:five_term_exact_sequence} which involve $\Ext_{ \cM }^1$ and $\Hom_{ \cM }$ are $0$, so \eqref{equ:five_term_exact_sequence} implies $\Ext^1_{ \cX }( X,S_j N ) = 0$.  Hence $X \in {}^{ \perp }\{\, S_*( \cC ) \,\}$.  

Part (i), the inclusion $\supseteq$:  Let $X \in {}^{ \perp }\{\, S_*( \cC ) \,\}$ and $j \in J$ be given.  

For $N \in \cC$, the term in \eqref{equ:five_term_exact_sequence} which involves $\Ext_{ \cX }^1$ is $0$, so \eqref{equ:five_term_exact_sequence} implies $\Ext^1_{ \cM }( C_j X,N ) = 0$.  Hence $C_j( X ) \in {}^{ \perp }\cC$.

Assume that $L_1C_j( X ) \neq 0$.  Pick an injective object $N$ which is in $\cC$ and satisfies $\Hom_{ \cM }( L_1C_j X,N ) \neq 0$.  By the previous paragraph, the term in \eqref{equ:five_term_exact_sequence} which involves $\Ext_{ \cX }^1$ is $0$.  However, the term involving $\Ext^2_{ \cM }$ is also $0$ since $N$ is injective, so \eqref{equ:five_term_exact_sequence} implies $\Hom_{ \cM }( L_1 C_j X,N ) = 0$.  This is a contradiction, so we conclude $L_1 C_j( X ) = 0$.  Combining with the previous paragraph shows $X \in \Phi( {}^{ \perp }\cC )$.  

Part (ii): Proved dually to part (i).
\end{proof}

\begin{Theorem}
\label{thm:Phi_Psi}
There are cotorsion pairs $\big( \Phi( \cA ),\Phi( \cA )^{ \perp } \big)$ and $\big( {}^{ \perp }\Psi( \cB ),\Psi( \cB ) \big)$ in $\cX$.
\end{Theorem}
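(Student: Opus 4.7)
The plan is to reduce the theorem to Lemma \ref{lem:perps} together with the purely formal fact that any class of the form ${}^{ \perp }\cS$ (resp.\ $\cS^{ \perp }$) is the left (resp.\ right) half of a cotorsion pair. Concretely, starting from the cotorsion pair $( \cA,\cB )$ in $\cM$, I will use that $\cA = {}^{ \perp }\cB$ and $\cB = \cA^{ \perp }$ so that $\Phi( \cA )$ and $\Psi( \cB )$ are of the form $\Phi( {}^{ \perp }\cC )$ and $\Psi( \cC^{ \perp } )$ respectively, with $\cC = \cB$ in the first case and $\cC = \cA$ in the second.

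The key step is verifying the hypotheses of Lemma \ref{lem:perps}. For part (i) applied with $\cC = \cB$, I need that every nonzero $M \in \cM$ admits an injective object in $\cB$ with nonzero map from $M$. This follows because $\cM$ has enough injectives by Setup \ref{set:blanket}, and because every injective object automatically lies in $\cB = \cA^{ \perp }$ (as $\Ext^1_{ \cM }( A,I ) = 0$ whenever $I$ is injective); so an injective envelope $M \hookrightarrow I$ does the job. The dual check for part (ii) applied with $\cC = \cA$ is symmetric: enough projectives and the inclusion of projectives in $\cA = {}^{ \perp }\cB$ together provide a projective $P \in \cA$ with $\Hom_{ \cM }( P,M ) \neq 0$. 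Lemma \ref{lem:perps} then gives
\[
  \Phi( \cA ) = {}^{ \perp }\{\, S_*( \cB ) \,\}
  \;\;\text{and}\;\;
  \Psi( \cB ) = \{\, S_*( \cA ) \,\}^{ \perp }.
\]

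To finish, I will invoke the general principle that for any class $\cS$ of objects in $\cX$, the pair $\big( {}^{ \perp }\cS,( {}^{ \perp }\cS )^{ \perp } \big)$ is a cotorsion pair. Indeed, the inclusion $\Gamma \subseteq {}^{ \perp }( \Gamma^{ \perp } )$ is tautological for $\Gamma = {}^{ \perp }\cS$, while the reverse inclusion follows from $\cS \subseteq ( {}^{ \perp }\cS )^{ \perp }$, which upon taking $^{ \perp }$ of both sides gives ${}^{ \perp }\big( ( {}^{ \perp }\cS )^{ \perp } \big) \subseteq {}^{ \perp }\cS$. The dual statement says $\big( {}^{ \perp }( \cS^{ \perp } ),\cS^{ \perp } \big)$ is a cotorsion pair. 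Specialising these to $\cS = \{\, S_*( \cB ) \,\}$ and $\cS = \{\, S_*( \cA ) \,\}$ produces exactly the two cotorsion pairs asserted in the theorem.

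I do not anticipate a genuine obstacle: the whole argument is a bookkeeping exercise, and the only point that could fail is the hypothesis of Lemma \ref{lem:perps}, which is secured by the standard observation that injectives lie in $\cB$ and projectives lie in $\cA$ for any cotorsion pair. Note that heredity and compatibility are \emph{not} part of the statement and so require no work here; they will be addressed separately under additional assumptions later in Section \ref{sec:Phi_Psi}.
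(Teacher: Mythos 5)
Your proposal is correct and follows essentially the same route as the paper: one verifies the hypotheses of Lemma \ref{lem:perps} using that projectives lie in $\cA$ and injectives lie in $\cB$ (any monomorphism into an injective suffices here, so enough injectives is all that is needed, not envelopes), obtains $\Phi( \cA ) = {}^{ \perp }\{\, S_*( \cB ) \,\}$ and $\Psi( \cB ) = \{\, S_*( \cA ) \,\}^{ \perp }$, and concludes that these are the left and right halves of cotorsion pairs. The only cosmetic difference is that you spell out the formal fact that $\big( {}^{ \perp }\cS,( {}^{ \perp }\cS )^{ \perp } \big)$ and $\big( {}^{ \perp }( \cS^{ \perp } ),\cS^{ \perp } \big)$ are always cotorsion pairs, where the paper simply cites the notion of (co)generated cotorsion pairs.
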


\begin{proof}
The class $\cA$ contains the projective objects of $\cM$, and the class $\cB$ contains the injective objects of $\cM$.  Since we also have $\cA = {}^{ \perp }\cB$ and $\cB = \cA^{ \perp }$, Lemma \ref{lem:perps} implies
\[
  \Phi( \cA ) = {}^{ \perp }\{\, S_*( \cB ) \,\}
  \;\;,\;\;
  \Psi( \cB ) = \{\, S_*( \cA ) \,\}^{ \perp }.
\]
Hence there are the following cotorsion pairs, see \cite[def.\ 5.15]{GT}:
\begin{align*}
  \big( \Phi( \cA ),\Phi( \cA )^{ \perp } \big)
  & =
  \big( {}^{ \perp }\{\, S_*( \cB ) \,\},( {}^{ \perp }\{\, S_*( \cB ) \,\} )^{ \perp } \big), \\[1mm]
  \big( {}^{ \perp }\Psi( \cB ),\Psi( \cB ) \big)
  & =
  \big( {}^{ \perp }( \{\, S_*( \cA ) \,\}^{ \perp } ), \{\, S_*( \cA ) \,\}^{ \perp } \big). \qedhere
\end{align*}
\end{proof}

\begin{Theorem}
\label{thm:hereditary}
Assume that $( \cA,\cB )$ is a hereditary cotorsion pair in $\cM$.
\begin{enumerate}
\setlength\itemsep{4pt}

  \item  If $L_2C_j( \cE_L ) = 0$ for $j \in J$, then there is a hereditary cotorsion pair $\big( \Phi( \cA ),\Phi( \cA )^{ \perp } \big)$ in $\cX$.

  \item  If $R^2K_j( \cE_R ) = 0$ for $j \in J$, then there is a hereditary cotorsion pair $\big( {}^{ \perp }\Psi( \cB ),\Psi( \cB ) \big)$ in $\cX$.

\end{enumerate}
\end{Theorem}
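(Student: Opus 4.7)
The plan is to invoke Lemma \ref{lem:hereditary} (and its dual), which reduces heredity of $\big(\Phi(\cA),\Phi(\cA)^\perp\big)$ to the assertion that $\Phi(\cA)$ is resolving in $\cX$, and heredity of $\big({}^\perp\Psi(\cB),\Psi(\cB)\big)$ to the dual assertion that $\Psi(\cB)$ is coresolving. The workhorse is the long exact sequence of the derived functors $L_\bullet C_j$, respectively $R^\bullet K_j$, together with the observation that $C_j$ preserves projectives and $K_j$ preserves injectives, both because the middle functor $S_j$ is exact (so left and right adjoints exchange projectivity and injectivity appropriately).

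For part (i), I would first note that any projective $P\in\cX$ lies in $\Phi(\cA)$: indeed $L_n C_j(P)=0$ for $n\geqslant 1$, and $C_j(P)$ is projective in $\cM$, hence in $\cA$ since $(\cA,\cB)$ is hereditary. Then, given a short exact sequence $0\to X'\to X\to X''\to 0$ with $X,X''\in\Phi(\cA)$, apply $C_j$ to obtain the long exact sequence
\[
L_2 C_j(X'')\to L_1 C_j(X')\to L_1 C_j(X)\to L_1 C_j(X'')\to C_j(X')\to C_j(X)\to C_j(X'')\to 0.
\]
The definition of $\Phi(\cA)$ kills $L_1 C_j(X)$ and $L_1 C_j(X'')$, while the standing hypothesis $L_2 C_j(\cE_L)=0$, applied to $X''\in\Phi(\cA)\subseteq\cE_L$, kills $L_2 C_j(X'')$. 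This forces $L_1 C_j(X')=0$ and produces a short exact sequence $0\to C_j(X')\to C_j(X)\to C_j(X'')\to 0$ in $\cM$ whose outer terms lie in $\cA$. Closure of $\cA$ under kernels of epimorphisms (heredity of $(\cA,\cB)$) then places $C_j(X')$ in $\cA$, so $X'\in\Phi(\cA)$.

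Part (ii) is handled by the exactly dual argument, using the right derived functors of $K_j$ and the hypothesis $R^2 K_j(\cE_R)=0$ to show that $\Psi(\cB)$ is closed under cokernels of monomorphisms; injectives of $\cX$ lie in $\Psi(\cB)$ because $K_j$ preserves injectives. The only formal point to record is the dual of Lemma \ref{lem:hereditary} — a cotorsion pair is hereditary iff its right class is coresolving — which is immediate from the self-dual definition of a hereditary cotorsion pair. I do not anticipate a substantive obstacle; the hypotheses $L_2 C_j(\cE_L)=0$ and $R^2 K_j(\cE_R)=0$ are calibrated precisely so that the $L_2$, respectively $R^2$, term of the third object in the short exact sequence vanishes, which is exactly what the long exact sequence chase requires.
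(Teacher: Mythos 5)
Your proposal is correct and follows essentially the same route as the paper: reduce via Lemma \ref{lem:hereditary} to showing $\Phi(\cA)$ is resolving (dually $\Psi(\cB)$ coresolving), then chase the long exact sequence of $L_\bullet C_j$ (resp.\ $R^\bullet K_j$), using $X''\in\cE_L$ together with the hypothesis $L_2C_j(\cE_L)=0$ to kill the obstruction term and heredity of $(\cA,\cB)$ to place $C_j(X')$ in $\cA$. Your extra verification that projectives of $\cX$ lie in $\Phi(\cA)$ is harmless but unnecessary (the left class of any cotorsion pair contains the projectives, so only closure under kernels of epimorphisms needs checking), and note that $C_j(P)$ projective lies in $\cA$ for any cotorsion pair, not just hereditary ones.
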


\begin{proof}
The cotorsion pairs exist by Theorem \ref{thm:Phi_Psi}, and we must prove heredity under the given assumptions.

(i):  Lemma \ref{lem:hereditary} implies that $\cA$ is resolving, and that it is enough to prove that so is $\Phi( \cA )$.  Let $0 \rightarrow X' \rightarrow X \rightarrow X'' \rightarrow 0$ be a short exact sequence in $\cX$ with $X,X'' \in \Phi( \cA )$, and let $j \in J$ be given.  By definition we have $C_j( X ), C_j( X'' ) \in \cA$ and $L_1 C_j( X ) = L_1 C_j( X'' ) = 0$.  In particular, $X'' \in \cE_L$, so the assumption in part (i) says $L_2 C_j( X'' ) = 0$.  Hence the long exact sequence
\[
  \cdots
  \rightarrow L_2 C_j( X'' )
  \rightarrow L_1 C_j( X' )
  \rightarrow L_1 C_j( X )
  \rightarrow L_1 C_j( X'' )
  \rightarrow C_j( X' )
  \rightarrow C_j( X )
  \rightarrow C_j( X'' )
  \rightarrow 0
\]
reads
\[
  \cdots
  \rightarrow 0
  \rightarrow L_1 C_j( X' )
  \rightarrow 0
  \rightarrow 0
  \rightarrow C_j( X' )
  \rightarrow C_j( X )
  \rightarrow C_j( X'' )
  \rightarrow 0.
\]
This implies $L_1 C_j( X' ) = 0$.  It also implies $C_j( X' ) \in \cA$ because $\cA$ is resolving and $C_j( X ), C_j( X'' ) \in \cA$.  Hence $X' \in \Phi( \cA )$ as desired.

(ii):  Proved dually to (i).
\end{proof}

\begin{Definition}
\label{def:ExSeq}
Consider the following conditions on the classes $\cE_L$, $\cE_R$, $\Phi( \cA )$, $\Psi( \cB )$ from Definition \ref{def:Phi_Psi}.
\begin{VarDescription}{Seq\quad}
\setlength\itemsep{5pt}

  \item[(Ex)] $\cE_L = \cE_R$.

  \item[(Seq)] If $j \in J$ is given, then:

\begin{enumerate}
\setlength\itemsep{4pt}

  \item  Each $A \in \cA$ permits a short exact sequence in $\cX$,
\[  
  0 \rightarrow S_j( A ) \rightarrow R \rightarrow U \rightarrow 0,
\]
with $R \in \Phi( \cA )$ and $\Ext^2_{ \cX }( U,\cE_R ) = 0$.

  \item  Each $B \in \cB$ permits a short exact sequence in $\cX$,
\[
  0 \rightarrow W \rightarrow T \rightarrow S_j( B ) \rightarrow 0,
\]  
with $T \in \Psi( \cB )$ and $\Ext^2_{ \cX }( \cE_L,W ) = 0$.

\end{enumerate}

\end{VarDescription}
\end{Definition}

\begin{Remark}
It is not obvious that the sequences in condition (Seq) of the definition exist.  Their construction in the category of representations of a self-injective quiver is a key technical part of the paper, see Section \ref{sec:Seq}.
\end{Remark}

\begin{Theorem}
\label{thm:CTCP}
Assume that conditions {\rm (Comp1)}, {\rm (Ex)},  and {\rm (Seq)} hold (see Definitions \ref{def:cotorsion} and \ref{def:ExSeq}).  Then there are compatible cotorsion pairs $( \Phi( \cA ),\Phi( \cA )^{ \perp } )$ and $( {}^{ \perp }\Psi( \cB ),\Psi( \cB ) )$ in $\cX$.
\end{Theorem}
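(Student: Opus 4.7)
The plan is to verify the two conditions (Comp1) and (Comp2) from Definition \ref{def:cotorsion} for the pair $\big(\Phi(\cA), \Phi(\cA)^{\perp}\big)$ and $\big({}^{\perp}\Psi(\cB), \Psi(\cB)\big)$. These cotorsion pairs already exist in $\cX$ by Theorem \ref{thm:Phi_Psi}, so I only need to check the compatibility axioms. Condition (Comp1), i.e.\ $\Ext^{1}_{\cX}(\Phi(\cA), \Psi(\cB)) = 0$, is given by hypothesis. Hence the whole content of the theorem boils down to the equality of classes
\[
   \Phi(\cA) \cap \Phi(\cA)^{\perp} \;=\; {}^{\perp}\Psi(\cB) \cap \Psi(\cB),
\]
which I will establish by two symmetric inclusions.

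For the inclusion $\subseteq$, I would take $X \in \Phi(\cA) \cap \Phi(\cA)^{\perp}$. Condition (Comp1) gives $\Psi(\cB) \subseteq \Phi(\cA)^{\perp}$ (the reformulation noted in Definition \ref{def:cotorsion}(iv)), but I need the reverse half, so I apply (Comp1) directly to get $\Phi(\cA) \subseteq {}^{\perp}\Psi(\cB)$; hence $X \in {}^{\perp}\Psi(\cB)$ immediately. It remains to prove $X \in \Psi(\cB)$, meaning $R^{1}K_{j}(X) = 0$ and $K_{j}(X) \in \cB$ for every $j \in J$. The vanishing $R^{1}K_{j}(X) = 0$ follows from $X \in \Phi(\cA) \subseteq \cE_{L} = \cE_{R}$ by (Ex). For $K_{j}(X) \in \cB = \cA^{\perp}$, I fix $A \in \cA$ and aim to show $\Ext^{1}_{\cM}(A, K_{j}X) = 0$. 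By (Seq)(i), choose a short exact sequence $0 \to S_{j}(A) \to R \to U \to 0$ with $R \in \Phi(\cA)$ and $\Ext^{2}_{\cX}(U, \cE_{R}) = 0$; the long exact $\Ext_{\cX}(-, X)$ sequence then sandwiches $\Ext^{1}_{\cX}(S_{j}A, X)$ between $\Ext^{1}_{\cX}(R, X)$ (which is zero since $R \in \Phi(\cA)$ and $X \in \Phi(\cA)^{\perp}$) and $\Ext^{2}_{\cX}(U, X)$ (which is zero since $X \in \cE_{R}$), so $\Ext^{1}_{\cX}(S_{j}A, X) = 0$. Combined with $R^{1}K_{j}(X) = 0$, Lemma \ref{lem:five_term_exact_sequence_dual} gives $\Ext^{1}_{\cM}(A, K_{j}X) = 0$, as desired.

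For the inclusion $\supseteq$, I would mirror the argument. Take $X \in {}^{\perp}\Psi(\cB) \cap \Psi(\cB)$. By (Comp1), $\Psi(\cB) \subseteq \Phi(\cA)^{\perp}$, so $X \in \Phi(\cA)^{\perp}$, and I need only prove $X \in \Phi(\cA)$. The vanishing $L_{1}C_{j}(X) = 0$ follows from $X \in \Psi(\cB) \subseteq \cE_{R} = \cE_{L}$ by (Ex). For $C_{j}(X) \in \cA = {}^{\perp}\cB$, fix $B \in \cB$ and, using (Seq)(ii), choose $0 \to W \to T \to S_{j}(B) \to 0$ with $T \in \Psi(\cB)$ and $\Ext^{2}_{\cX}(\cE_{L}, W) = 0$; applying $\Ext_{\cX}(X, -)$ squeezes $\Ext^{1}_{\cX}(X, S_{j}B)$ between $\Ext^{1}_{\cX}(X, T) = 0$ and $\Ext^{2}_{\cX}(X, W) = 0$. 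Lemma \ref{lem:five_term_exact_sequence} then produces $\Ext^{1}_{\cM}(C_{j}X, B) = 0$, completing the argument.

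The main technical friction is not in either inclusion individually but in recognising how to bring the (Seq) sequences to bear. The difficulty is that $S_{j}(A)$ and $S_{j}(B)$ are not in $\Phi(\cA)$ and $\Psi(\cB)$ respectively, so neither $\Ext^{1}_{\cX}(S_{j}A, X)$ nor $\Ext^{1}_{\cX}(X, S_{j}B)$ vanishes for formal reasons; (Seq) is designed precisely to replace these by Ext groups against objects that do lie in $\Phi(\cA)$ or $\Psi(\cB)$, at the price of a second Ext term which is controlled by (Ex). The interplay between Lemmas \ref{lem:five_term_exact_sequence} and \ref{lem:five_term_exact_sequence_dual} and the (Seq) sequences is the core mechanism.
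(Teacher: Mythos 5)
Your proof is correct and follows essentially the same route as the paper: both directions hinge on the (Seq) short exact sequences, the long exact $\Ext_{\cX}$ sequences they induce, the vanishing supplied by (Ex), and Lemmas \ref{lem:five_term_exact_sequence} and \ref{lem:five_term_exact_sequence_dual}. The only cosmetic difference is that the paper proves the slightly stronger equalities ${}^{\perp}\Psi(\cB)\cap\cE=\Phi(\cA)$ and $\Phi(\cA)^{\perp}\cap\cE=\Psi(\cB)$ and then strips off $\cE$, whereas you verify the two inclusions of (Comp2) directly, each of which is the paper's argument specialised to $X$ in the relevant intersection.
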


\begin{proof}
The cotorsion pairs exist by Theorem \ref{thm:Phi_Psi}, and we must prove that they are compatible under the given assumptions, which amounts to proving that condition (Comp2) holds.  We have assumed condition (Ex), so write $\cE = \cE_L = \cE_R$.  It is enough to prove 
\begin{align}
\label{equ:CTCPi}
  {}^{ \perp }\Psi( \cB ) \cap \cE & = \Phi( \cA ), \\
\label{equ:CTCPii}
  \Phi( \cA )^{ \perp } \cap \cE & = \Psi( \cB ),
\end{align}
since then
\[
  \Phi( \cA ) \cap \Phi( \cA )^{ \perp } \cap \cE
  = \Phi( \cA ) \cap \Psi( \cB )
  = {}^{ \perp }\Psi( \cB ) \cap \Psi( \cB ) \cap \cE,
\]
and this shows (Comp2) since $\cE$ can be removed from the displayed formula because $\Phi( \cA ),\Psi( \cB ) \subseteq \cE$.

We prove Equation \eqref{equ:CTCPi} by establishing the two inclusions.   

The inclusion $\subseteq$: Let $X \in {}^{ \perp }\Psi( \cB ) \cap \cE$ be given.  Given $j \in J$ and $B \in \cB$, condition (Seq)(ii) provides a short exact sequence in $\cX$,
\[  
  0 \rightarrow W \rightarrow T \rightarrow S_j( B ) \rightarrow 0,
\]
with $T \in \Psi( \cB )$ and $\Ext^2_{ \cX }( \cE,W ) = 0$.  There is a long exact sequence containing
\[
  \Ext^1_{ \cX }( X,T )
  \rightarrow \Ext^1_{ \cX }( X,S_j B )
  \rightarrow \Ext^2_{ \cX }( X,W ).
\]
The first term is zero since $X \in {}^{ \perp }\Psi( \cB )$ and $T \in \Psi( \cB )$.  The last term is zero since $X \in \cE$ and $\Ext^2_{ \cX }( \cE,W ) = 0$.  Hence the middle term is zero: $\Ext^1_{ \cX }( X,S_j B ) = 0$.  By Lemma \ref{lem:five_term_exact_sequence} this implies $\Ext^1_{ \cM }( C_j X,B ) = 0$ whence $C_j ( X ) \in {}^{ \perp }\cB = \cA$.  We also know $X \in \cE$, so $L_1C_j( X ) = 0$.  It follows that $X \in \Phi( \cA )$.

The inclusion $\supseteq$:  This follows because $\Phi( \cA ) \subseteq \cE$, while condition (Comp1) implies $\Phi( \cA ) \subseteq {}^{ \perp }\Psi( \cB )$.

Equation \eqref{equ:CTCPii} is proved dually to Equation \eqref{equ:CTCPi}.
\end{proof}

We end by recording a lemma which has almost the same proof as Theorem \ref{thm:Phi_Psi}.

\begin{Lemma}
\label{lem:Phi_Psi}
Let $\cC$ be a class of objects in $\cM$.
\begin{enumerate}
\setlength\itemsep{4pt}

  \item  Assume that for each non-zero $M \in \cM$ there is an injective object $I$ which is in $\cC$ and satisfies $\Hom_{ \cM }( M,I ) \neq 0$.

\medskip
\noindent  
  If $( \cA,\cB )$ is the cotorsion pair in $\cM$ cogenerated by $\cC$, then $\big( \Phi( \cA ),\Phi( \cA )^{ \perp } \big)$ is the cotorsion pair in $\cX$ cogenerated by $\{\, S_*( \cC ) \,\}$.

  \item  Assume that for each non-zero $M \in \cM$ there is a projective object $P$ which is in $\cC$ and satisfies $\Hom_{ \cM }( P,M ) \neq 0$.

\medskip
\noindent    
  If $( \cA,\cB )$ is the cotorsion pair in $\cM$ generated by $\cC$, then $\big( {}^{ \perp }\Psi( \cB ),\Psi( \cB ) \big)$ is the cotorsion pair in $\cX$ generated by $\{\, S_*( \cC ) \,\}$.

\end{enumerate}
\end{Lemma}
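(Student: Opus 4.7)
The plan is to piggyback directly on the argument already used for Theorem \ref{thm:Phi_Psi}, the only difference being that in place of the whole class $\cB$ (respectively $\cA$) we feed Lemma \ref{lem:perps} the subclass $\cC$ supplied by the hypothesis.

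For part (i), I would first invoke Lemma \ref{lem:perps}(i) applied to the class $\cC$ itself. The hypothesis of that lemma asks precisely that every non-zero $M \in \cM$ admit an injective object in $\cC$ receiving a nonzero morphism from $M$, which is the assumption we have been handed. The lemma therefore yields the identity
\[
\Phi({}^{\perp}\cC) = {}^{\perp}\{\, S_*(\cC) \,\}.
\]
Next, by definition of the cotorsion pair cogenerated by $\cC$, we have $\cA = {}^{\perp}\cC$, so $\Phi(\cA) = {}^{\perp}\{\, S_*(\cC) \,\}$. Consequently,
\[
\bigl(\Phi(\cA),\Phi(\cA)^{\perp}\bigr) = \bigl({}^{\perp}\{\,S_*(\cC)\,\},\, ({}^{\perp}\{\,S_*(\cC)\,\})^{\perp}\bigr),
\]
which is exactly the cotorsion pair in $\cX$ cogenerated by $\{\,S_*(\cC)\,\}$.

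Part (ii) is proved dually, using Lemma \ref{lem:perps}(ii) with the class $\cC$ and the hypothesis that every non-zero $M \in \cM$ receives a nonzero morphism from a projective object lying in $\cC$. This yields $\Psi(\cC^{\perp}) = \{\,S_*(\cC)\,\}^{\perp}$, and since $(\cA,\cB)$ is generated by $\cC$ we have $\cB = \cC^{\perp}$, whence $\Psi(\cB) = \{\,S_*(\cC)\,\}^{\perp}$, identifying $({}^{\perp}\Psi(\cB),\Psi(\cB))$ as the cotorsion pair generated by $\{\,S_*(\cC)\,\}$. No real obstacle arises here: the work has already been absorbed into Lemma \ref{lem:perps}, and the present statement is essentially a repackaging of that result in terms of (co)generating classes.
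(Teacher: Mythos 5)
Your proposal is correct and is essentially the same argument as the paper's: apply Lemma \ref{lem:perps} to the class $\cC$ (whose hypothesis is exactly the assumption given), use $\cA = {}^{\perp}\cC$ (respectively $\cB = \cC^{\perp}$) from the (co)generation assumption, and read off the resulting cotorsion pair. Nothing further is needed.
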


\begin{proof}
(i):  If $( \cA,\cB )$ is cogenerated by $\cC$, then $\cA = {}^{ \perp }\cC$, so Lemma \ref{lem:perps} implies $\Phi( \cA ) = {}^{ \perp }\{\, S_*( \cC ) \,\}$.  Hence
\[
  \big( \Phi( \cA ),\Phi( \cA )^{ \perp } \big)
  =
  \big( {}^{ \perp }\{\, S_*( \cC ) \,\},( {}^{ \perp }\{\, S_*( \cC ) \,\} )^{ \perp } \big) \\
\]
is the cotorsion pair cogenerated by $\{\, S_*( \cC ) \,\}$.

(ii) is proved dually to (i).
\end{proof}

\section{Functor categories}
\label{sec:functor_category}

This section introduces functor categories.  In particular, Paragraph \ref{bfhpg:quivers} explains how a category of quiver representations can be viewed as a functor category, whence Theorem \ref{thm:self-injective_main} has Theorem \ref{thm:main} as a special case.

\begin{Setup}
\label{set:blanket10}
Section \ref{sec:functor_category} uses the following setup.
\begin{itemize}
\setlength\itemsep{4pt}

  \item  $k$ is a field.
  
  \item  $R$ is a $k$-algebra.

\end{itemize}
\end{Setup}

\begin{bfhpg}[\bf Functor categories]
\label{bfhpg:functor_categories}
Let $Q$ be a small $k$-preadditive category; that is, each $\Hom$-space is a $k$-vector space and composition of morphisms is $k$-bilinear. 
The homomorphism functor and the radical of $Q$ will be denoted $Q( -,- )$ and $\rad_Q( -,- )$, see \cite[sec.\ A.3]{ASS}, \cite[sec.\ 3.2]{GabRoi}, and \cite[p.\ 303]{Kelly}.  

Let $Q^{\opp}$ denote the opposite category, and let ${}_{ k }\!\Mod$ and ${}_{ R }\!\Mod$ denote, respectively, the categories of $k$-vector spaces and $R$-left-modules.  There are the following functor categories.
\[
\mbox{
\begin{tabular}{cll}
  ${}_{ Q }\!\Mod$ & $=$ & the category of $k$-linear functors $Q \rightarrow {}_{ k }\!\Mod$ \\[1mm]
  $\Mod_Q$ & $=$ & the category of $k$-linear functors $Q^{ \opp } \rightarrow {}_{ k }\!\Mod$ \\[1mm]
  ${}_{ Q,R }\!\Mod$ & $=$ & the category of $k$-linear functors $Q \rightarrow {}_{ R }\!\Mod$ \\[1mm]
  ${}_{ Q }\!\Mod_Q$ & $=$ & the category of $k$-linear functors $Q^{ \opp } \times Q \rightarrow {}_{ k }\!\Mod$
\end{tabular}
}
\]
Their homomorphism functors are denoted $\Hom_Q$, $\Hom_{ Q^{ \opp } }$, $\Hom_{ Q,R }$, and $\Hom_{ Q^e }$.  

We think of them as the categories of $Q$-left-modules, $Q$-right-modules, $Q$-left-$R$-left-modules, and $Q$-bi-modules.  They are abelian categories with enough projective and injective objects, which are in fact Grothendieck categories.  In each of the categories ${}_{ Q }\!\Mod$, $\Mod_Q$, and ${}_{ Q,R }\!\Mod$, a sequence of functors $L' \xrightarrow{ \lambda' } L \xrightarrow{ \lambda } L''$ is short exact if $0 \xrightarrow{} L'( q ) \xrightarrow{ \lambda'_q } L( q ) \xrightarrow{ \lambda_q } L''( q ) \xrightarrow{} 0$ is a short exact sequence in ${}_{ k }\!\Mod$ or ${}_{ R }\!\Mod$ for each $q \in Q$.  An object $X$ of ${}_{ Q,R }\!\Mod$ can be viewed as an object of ${}_{ Q }\!\Mod$ by forgetting the $R$-structure on each $X( q )$ for $q \in Q$.  We refer to Appendix \ref{app:functor} for additional information.
\end{bfhpg}

\begin{Definition}
\label{def:FinRadSelfInj}
The following are conditions we can impose on a small $k$-preadditive category $Q$.
\begin{VarDescription}{SelfInj\quad}
\setlength\itemsep{5pt}

  \item[(Fin)]  Each $\Hom$-space of $Q$ is finite dimensional over $k$.  If $q \in Q$ is fixed then $Q( p,q ) = Q( q,p ) = 0$ except for finitely many $p \in Q$.  There is an integer $N$ such that $\rad_Q^N = 0$.
  
  \item[(Rad)]  If $q \in Q$ then $Q( q,q )$ is a local $k$-algebra, and the canonical map $k \xrightarrow{} Q( q,q )/\rad Q( q,q )$ is an isomorphism of $k$-algebras.  If $p \neq q$ are in $Q$ then $Q( p,q ) = \rad_Q( p,q )$.

  \item[(SelfInj)]  The category $Q$ has a Serre functor, that is, a $k$-linear autoequivalence $W: Q \rightarrow Q$ such that there are natural isomorphisms $Q( p,q ) \cong \dual\!Q( q,Wp )$ where $\dual( - ) = \Hom_k( -,k )$.

\end{VarDescription}
Note that the last part of condition (Rad) implies that different objects of $Q$ are non-isomorphic.  Conditions (Fin) and (Rad) imply that $Q$ is a locally bounded spectroid in the terminology of \cite[secs.\ 3.5 and 8.3]{GabRoi}, whence the functor categories over $Q$ share many properties of module categories over a finite dimensional algebra, see Appendix \ref{app:functor}.  If condition (SelfInj) also holds, then projective, injective, and flat objects coincide in each of ${}_{ Q }\!\Mod$ and $\Mod_Q$, see Paragraph \ref{bfhpg:flat_and_injective}.

\end{Definition}

\begin{bfhpg}
[\bf Special case: Quivers with relations]
\label{bfhpg:quivers}
Let $Q$ be a quiver with relations over $k$ in the sense of \cite[def.\ II.2.3]{ASS}.  Then $Q$ can be viewed as a small $k$-preadditive category:  The objects are the vertices, and the morphism spaces are the $k$-linear combinations of paths modulo relations.  Composition of morphisms is induced by concatenation of paths.

Viewed as a quiver with relations, $Q$ has a category $\cX$ of representations with values in ${}_{ R }\!\Mod$.  Viewed as a small $k$-preadditive category, $Q$ has the functor category ${}_{ Q,R }\!\Mod$.  The categories $\cX$ and ${}_{ Q,R }\!\Mod$ can be identified.

We say that $Q$ is a {\em self-injective quiver with relations} if $Q$, viewed as a small $k$-preadditive category, satisfies conditions (Fin), (Rad), and (SelfInj) of Definition \ref{def:FinRadSelfInj}.

The quivers with relations from the introduction are self-injective.  In particular, the Serre functors are given as follows: For \eqref{equ:Gillespie_quiver} by the shift $q \mapsto q-1$, for \eqref{equ:intro_quiver_2} by the shift $q \mapsto q-1$ where $q$ is taken modulo $N$, and for \eqref{equ:intro_quiver_4} by reflecting in a horizontal line through the vertices $( 1,j )$, then shifting one vertex to the right.
\end{bfhpg}

\begin{figure}
\begin{center}
\begin{tabular}{cc|cc}
  Structures over the category $Q$ &&&   Structures over the algebra $\Lambda$ \\[2mm] \hline
  ${}_{ Q,R }\!\Mod$ &&&   ${}_{ \Lambda,R }\!\Mod = \Lambda$-left-$R$-left-modules \\[1mm]
  ${}_{ Q }\!\Mod$ &&& ${}_{ \Lambda }\!\Mod = \Lambda$-left-modules \\[1mm]
  $\Mod_Q$ &&&   $\Mod_{ \Lambda } = \Lambda$-right-modules \\[1mm]
  ${}_{ Q }\!\Mod_Q$ &&& ${}_{ \Lambda }\!\Mod_{ \Lambda } = \Lambda$-bimodules \\[1mm]
  $\Hom_{ Q,R }$ &&& $\Hom_{ \Lambda,R } =$ homomorphisms of $\Lambda$-left-$R$-left-modules \\[1mm]
  $\Hom_Q$ &&& $\Hom_{ \Lambda } =$ homomorphisms of $\Lambda$-left-modules \\[1mm]
  $\Hom_{ Q^{ \opp } }$ &&& $\Hom_{ \Lambda^{ \opp } } =$ homomorphisms of $\Lambda$-right-modules \\[1mm]  
  $\Hom_{ Q^e }$ &&& $\Hom_{ \Lambda^e } =$ homomorphisms of $\Lambda$-bimodules \\[1mm]
  $\underset{ Q }{ \otimes }$ &&& $\underset{ \Lambda }{ \otimes } =$ tensor product of $\Lambda$-modules \\[3mm]
  $\underset{ k }{ \otimes }$ &&& $\underset{ k }{ \otimes } =$ tensor product of $k$-vector spaces \\[3mm]
  $S \langle q \rangle$ &&& $S \langle q \rangle =$ the simple $\Lambda$-left-module supported at vertex $q$ \\[1mm]
  $\dual\!S \langle q \rangle$ &&& $\dual\!S \langle q \rangle =$ the simple $\Lambda$-right-module supported at vertex $q$
\end{tabular}
\end{center}
\caption{A finite self-injective quiver with relations $Q$ can be viewed as a small $k$-preadditive category.  On the other hand, it gives a finite dimensional algebra $\Lambda$, and structures over $Q$ and $\Lambda$ can be identified as shown.}
\label{fig:dictionary}
\end{figure}

\begin{bfhpg}
[\bf Special case: Finite quivers with relations]
\label{bfhpg:finite_quivers}
Let $Q$ be a self-injective quiver with relations over $k$.  Assume that $Q$ is finite and connected, and that its relations are given by an admissible ideal $\fa$ in the path algebra $A$ over $k$, see \cite[def.\ II.2.1]{ASS}.

On the one hand, $Q$ can be viewed as a small $k$-preadditive category, which has the functor category ${}_{ Q,R }\!\Mod$.  On the other hand, there is a finite dimensional algebra $\Lambda = A/\fa$, which has the category ${}_{ \Lambda,R }\!\Mod$ of $\Lambda$-left-$R$-left-modules.  The categories ${}_{ Q,R }\!\Mod$ and ${}_{ \Lambda,R }\!\Mod$ can be identified.

A more extensive list of identifications is given in Figure \ref{fig:dictionary}, where the entries in the first column are explained in Paragraph \ref{bfhpg:functor_categories} and Appendix \ref{app:functor}.  The list can be extended with $\Ext$- and $\Tor$-functors.

Note that since $Q$ is a self-injective quiver with relations, $\Lambda$ is a self-injective algebra.
\end{bfhpg}

\section{Proof of Theorem \ref{thm:main}}
\label{sec:self-injective}

This section proves Theorem \ref{thm:self-injective_main}, which has Theorem \ref{thm:main} as a special case, see Paragraph \ref{bfhpg:quivers}.

Sections \ref{sec:self-injective} through \ref{sec:Seq} are phrased in the language of functor categories over a small $k$-preadditive category $Q$.  A reader who prefers modules instead of functors can use Figure \ref{fig:dictionary} to specialise everything to the case of modules over a finite dimensional self-injective algebra $\Lambda$.

\begin{Setup}
\label{set:self-injective}
Sections \ref{sec:self-injective} through \ref{sec:Seq} use the following setup, which dovetails with Setups \ref{set:blanket} and \ref{set:blanket10} so the results of Sections \ref{sec:Phi_Psi} and \ref{sec:functor_category} can be used verbatim.  We refer to Appendix \ref{app:functor} for additional information, in particular on several functors which will be used extensively: $\underset{ k }{ \otimes }$, $\Hom_k$, $\underset{ Q }{ \otimes }$, $\Tor^Q_i$, $\Hom_Q$, $\Ext_Q^i$, $\Hom_{ Q,R }$, $\Ext_{ Q,R }^i$.  
\begin{itemize}
\setlength\itemsep{4pt}

  \item  $k$ is a field.
  
  \item  $R$ is a $k$-algebra.

  \item  $Q$ is a small $k$-preadditive category satisfying conditions (Fin), (Rad), and (SelfInj) of Definition \ref{def:FinRadSelfInj}. 

  \item  $\cM = {}_{ R }\!\Mod$ is the category of $R$-left-modules. 

  \item  $\cX = {}_{ Q,R }\!\Mod$ is the category of $k$-linear functors $Q \rightarrow {}_{ R }\!\Mod$.  
  
\smallskip
\noindent
The categories $\cM$ and $\cX$ have enough projective and enough injective objects by Paragraph \ref{bfhpg:simple}.  

  \item  $ ( \cA,\cB )$ is a cotorsion pair in $\cM$.

  \item  $J = \obj\,Q$.  The statement $q \in \obj\,Q$ will be abbreviated $q \in Q$.  

  \item  
For each $q \in Q$, there is a simple object $\dual\!S \langle q \rangle \in \Mod_Q$ and a simple object $S \langle q \rangle \in {}_{ Q }\!\Mod$, see Paragraph \ref{bfhpg:simple}(i).  The functors 
\[
\vcenter{
\xymatrix
{
  *+[l]{ {}_{ R }\!\Mod = \cM }
    \ar[rrr]^{ S_q } &&&
  *+[r]{ \cX = {}_{ Q,R }\!\Mod }
    \ar@/_2.0pc/[lll]_{ C_q }
    \ar@/^2.0pc/[lll]^{ K_q }
}
        }
\]
are defined by:
\begin{align}
\nonumber
  C_q( - ) & = \dual\!S \langle q \rangle \underset{ Q }{ \otimes } -, \\[1mm]
\label{equ:self-injective_CSK}
  S_q( - ) & = S \langle q \rangle \underset{ k }{ \otimes } -, \\[1mm]
\nonumber
  K_q( - ) & = \Hom_Q( S \langle q \rangle,- ).
\end{align}
There is an adjoint pair $( C_q,S_q )$ by Paragraph \ref{bfhpg:standard}(ii) and the observation that we have $S_q( - ) = \Hom_k( \dual\!S \langle q \rangle,- )$ by Paragraph \ref{bfhpg:standard}(vi).  There is an adjoint pair $( S_q,K_q )$ by Paragraph \ref{bfhpg:standard}(i).

  \item  $\cE$ denotes either $\cE_L$ or $\cE_R$; these classes are equal because condition (Ex) of Definition \ref{def:ExSeq} holds by Proposition \ref{pro:self-injective_Ex}.

\end{itemize}
\end{Setup}

\begin{Theorem}
\label{thm:self-injective_main}
If $( \cA,\cB )$ is a hereditary cotorsion pair in $\cM = {}_{ R }\!\Mod$, then there are hereditary, compatible cotorsion pairs $\big( \Phi( \cA ),\Phi( \cA )^{ \perp } \big)$ and $\big( {}^{ \perp }\Psi( \cB ),\Psi( \cB ) \big)$ in $\cX = {}_{ Q,R }\!\Mod$, where
\begin{align*}
  \Phi( \cA ) & = \{\, X \in \cX \mid \mbox{If $q \in Q$ then $C_q( X ) \in \cA$ and $L_1C_q( X ) = 0$} \,\}, \\[1mm]
  \Psi( \cB ) & = \{\, X \in \cX \mid \mbox{If $q \in Q$ then $K_q( X ) \in \cB$ and $R^1K_q( X ) = 0$} \,\}.
\end{align*}
\end{Theorem}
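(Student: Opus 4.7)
The plan is to verify the hypotheses of Theorems \ref{thm:Phi_Psi}, \ref{thm:hereditary}, and \ref{thm:CTCP} from Section \ref{sec:Phi_Psi} in the present self-injective setup. The setup in \ref{set:self-injective} is arranged precisely so that $\cM$, $\cX$, and the adjoint triples $(C_q, S_q, K_q)$ satisfy the axioms of Setup \ref{set:blanket}; in particular $S_q$ is exact, being left adjoint to $K_q$ and right adjoint to $C_q$, and both $\cM$ and $\cX$ have enough projectives and injectives by the properties of the functor category ${}_{Q,R}\Mod$ reviewed in Paragraph \ref{bfhpg:simple}. Consequently, the mere existence of the cotorsion pairs $(\Phi(\cA), \Phi(\cA)^\perp)$ and $({}^\perp\Psi(\cB), \Psi(\cB))$ in $\cX$ is immediate from Theorem \ref{thm:Phi_Psi}, so the work lies in establishing heredity and compatibility.

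For heredity we wish to apply Theorem \ref{thm:hereditary}, which asks that $L_2 C_q$ vanishes on $\cE_L$ and $R^2 K_q$ vanishes on $\cE_R$ for every $q \in Q$. Self-injectivity of $Q$, together with (Fin) and (Rad), should yield almost split sequences ending at $\dual S\langle q \rangle \in \Mod_Q$ and starting at $S\langle q \rangle \in {}_Q\Mod$; these give enough control over the beginning of a projective resolution of $\dual S\langle q \rangle$ (respectively an injective coresolution of $S\langle q \rangle$) to express $L_2 C_q(X)$ and $R^2 K_q(X)$ in terms of $L_1 C_{q'}(X)$ and $R^1 K_{q'}(X)$ for the finitely many neighbouring vertices $q'$ occurring in those sequences. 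The defining vanishing of $\cE_L$ and $\cE_R$ then gives the required $L_2 C_q(\cE_L) = 0$ and $R^2 K_q(\cE_R) = 0$.

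For compatibility we apply Theorem \ref{thm:CTCP}. Condition (Ex) is recorded in Setup \ref{set:self-injective} itself, being provided by Proposition \ref{pro:self-injective_Ex}; heuristically, $\cE_L = \cE_R$ should be derived from the Serre functor $W$, which transports projective resolutions of simple right modules $\dual S\langle q\rangle$ to injective coresolutions of simple left modules $S\langle Wq\rangle$, equating $L_1 C_q$ with $R^1 K_{Wq}$. Condition (Comp1) amounts to showing $\Ext^1_\cX(\Phi(\cA), \Psi(\cB)) = 0$, and should follow by filtering objects of $\Phi(\cA)$ through the functors $S_q$ and comparing the five-term sequences of Lemmas \ref{lem:five_term_exact_sequence} and \ref{lem:five_term_exact_sequence_dual} with the vanishing $\Ext^1_\cM(\cA, \cB) = 0$.

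The main obstacle, and the primary technical content of the argument, is condition (Seq): for each $q \in Q$, $A \in \cA$, and $B \in \cB$, we must construct short exact sequences
\[
0 \to S_q(A) \to R \to U \to 0, \qquad 0 \to W \to T \to S_q(B) \to 0
\]
in $\cX$ with $R \in \Phi(\cA)$, $T \in \Psi(\cB)$, and with $\Ext^2_\cX(U, \cE) = 0 = \Ext^2_\cX(\cE, W)$. This requires an explicit construction in $\cX$ that extends the ``point module'' $S_q(A)$ to an exact resolution-like object in $\Phi(\cA)$, using the structure of $Q$ to fill in modules at the neighbouring vertices while respecting the relations; the dual construction handles $S_q(B)$. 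The detailed construction is deferred to Section \ref{sec:Seq}, after which Theorems \ref{thm:hereditary} and \ref{thm:CTCP} assemble to give the theorem.
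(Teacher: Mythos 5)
Your global strategy coincides with the paper's: the proof of Theorem \ref{thm:self-injective_main} is exactly the assembly you describe, namely existence from Theorem \ref{thm:Phi_Psi}, heredity from Theorem \ref{thm:hereditary}, and compatibility from Theorem \ref{thm:CTCP}, once the hypotheses (Ex), (Comp1), (Seq) and the vanishing of $L_2C_q$ and $R^2K_q$ on $\cE_L$, $\cE_R$ are verified (the paper does this via Propositions \ref{pro:self-injective_Ex}, \ref{pro:self_injective_Comp1} and \ref{pro:25}). Where you diverge is in the mechanism for (Ex) and for the heredity hypotheses: the paper proves Proposition \ref{pro:self-injective_Ex}, identifying $\cE_L$ with the objects that are flat over $Q$ via \eqref{equ:criterion_for_flatness} and $\cE_R$ with those injective over $Q$ via \eqref{equ:criterion_for_injectivity}, and then (SelfInj) gives flat $=$ injective $=$ projective by \eqref{equ:flat_v_injective}; this single characterisation kills \emph{all} higher $L_iC_q$ and $R^iK_q$ on $\cE$ at once. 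Your dimension-shifting idea can be made to work without self-injectivity (the first syzygy of $\dual\!S\langle q\rangle$, resp.\ of $S\langle q\rangle$, has finite length by (Fin) and is filtered by simples, so induction along the filtration reduces $L_2C_q$ to the $L_1C_{q'}$ and $R^2K_q$ to the $R^1K_{q'}$), but almost split sequences are not what is needed, and an injective coresolution of $S\langle q\rangle$ is in the wrong variable for computing $R^2K_q(X)=\Ext^2_Q(S\langle q\rangle,X)$ — you need a projective resolution of $S\langle q\rangle$ (or an injective resolution of $X$). Your Serre-functor heuristic for (Ex) and your sketch of (Comp1) (filtering $X\in\Phi(\cA)$ with subquotients $S_qC_p(X)$ and using the five-term sequences against $\Ext^1_{\cM}(\cA,\cB)=0$) are both in the spirit of the paper's arguments, though the existence of that filtration itself rests on the flatness of $X$ over $Q$, i.e.\ again on Proposition \ref{pro:self-injective_Ex}.

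The genuine gap is condition (Seq), which you correctly identify as the crux and then simply defer. Nothing in your proposal indicates how to produce the required sequences, and "filling in modules at neighbouring vertices" does not address the two hard points: that the middle term $T$ lies in $\Psi(\cB)$, and that the kernel $W$ satisfies $\Ext^2_{\cX}(\cE,W)=0$. The paper's Proposition \ref{pro:25} achieves this by an inverse-limit construction (Construction \ref{con:self-injective}): starting from a minimal projective resolution of $S\langle q\rangle$ in ${}_{Q}\!\Mod$ and an injective resolution of $B$ in ${}_{R}\!\Mod$, one builds a tower $\cdots \twoheadrightarrow T^1 \twoheadrightarrow T^0$ by iterated pullback and pushout, sets $T=\invlim T^i$, and then needs Mittag--Leffler arguments, the Lukas Lemma (Paragraph \ref{bfhpg:invlim}), and the heredity of $(\cA,\cB)$ to get $T\in\Psi(\cB)$ (Lemma \ref{lem:23}), plus Lemmas \ref{lem:24} and \ref{lem:self-injective_E_perp} to get $W\in\cE^{\perp}$; the dual construction for (Seq)(i) is easier because direct limits are exact. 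Heredity of $(\cA,\cB)$ is in fact only used in this step, so a proof that omits it has not yet used the main hypothesis in the place where it matters. Without some version of this construction your argument establishes existence and (granting the repaired dimension shift) heredity, but not compatibility.
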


\begin{proof}
The formulae for $\Phi( \cA )$ and $\Psi( \cB )$ are those of Definition \ref{def:Phi_Psi} adapted to the present setup, so the results of Section \ref{sec:Phi_Psi} apply.  In particular, the cotorsion pairs exist by Theorem \ref{thm:Phi_Psi}.  They are hereditary by Theorem \ref{thm:hereditary} combined with Proposition \ref{pro:self-injective_Ex} below.  They are compatible by Theorem \ref{thm:CTCP} combined with Propositions \ref{pro:self-injective_Ex}, \ref{pro:self_injective_Comp1}, and \ref{pro:25} below.
\end{proof}

\begin{Theorem}
\label{thm:self-injective_complete}
We have:
\begin{enumerate}
\setlength\itemsep{4pt}

  \item  If the cotorsion pair $( \cA,\cB )$ is generated by a set, then the cotorsion pair $\big( {}^{ \perp }\Psi( \cB ),\Psi( \cB ) \big)$ is complete.

  \item  Assume that $Q$ arises from the special case described in Paragraph \ref{bfhpg:finite_quivers}, so corresponds to a finite dimensional self-injective $k$-algebra $\Lambda$.  If $\cA$ is closed under pure quotients, then the cotorsion pair $\big( \Phi( \cA ),\Phi( \cA )^{ \perp } \big)$ is complete.

\end{enumerate}
\end{Theorem}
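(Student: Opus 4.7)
The overall strategy for both parts is to exhibit a generating set for the relevant cotorsion pair and invoke the Eklof--Trlifaj small object argument in the Grothendieck category $\cX = {}_{Q,R}\!\Mod$.

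For part (i), the plan is to apply Lemma \ref{lem:Phi_Psi}(ii) directly. Let $\cC$ be a set with $\cB = \cC^{\perp}$, and set $\cC' = \cC \cup \{R\}$. Since $R$ is projective we have $\{R\}^{\perp} = \cM$, so $(\cC')^{\perp} = \cC^{\perp} = \cB$; thus $\cC'$ still generates $(\cA,\cB)$, and now satisfies the projectivity hypothesis of Lemma \ref{lem:Phi_Psi}(ii), because $\Hom_R(R,M) \simeq M$ is nonzero for every nonzero $M \in \cM$. The lemma then identifies $\Psi(\cB)$ as $\{S_{\ast}(\cC')\}^{\perp}$, and $\{S_q(C) \mid q \in Q,\, C \in \cC'\}$ is a set since $Q$ is small and $\cC'$ is a set. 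The Eklof--Trlifaj theorem in $\cX$ then yields completeness of $({}^{\perp}\Psi(\cB),\Psi(\cB))$.

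For part (ii) the approach must be different, since no generating set for $(\cA,\cB)$ is assumed. Here the plan is to prove that $\Phi(\cA)$ is deconstructible, again feeding into Eklof--Trlifaj. Under the assumption of Paragraph \ref{bfhpg:finite_quivers}, the category $\cX$ is equivalent to the module category ${}_{\Lambda,R}\!\Mod$, hence locally finitely presented Grothendieck; moreover the functors $C_q$ and $L_1 C_q$ commute with filtered colimits and can be analysed locally via finite projective resolutions of the simple right $Q$-modules $\dual S\langle q\rangle$, whose tractability is ensured by conditions (Fin) and (SelfInj). The hypothesis that $\cA$ is closed under pure quotients, combined with $\cA$ being the left half of a cotorsion pair (so closed under extensions, summands, and transfinite extensions), makes $\cA$ a Kaplansky class in ${}_R\!\Mod$ by a standard argument. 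The plan is then to transfer this property to $\Phi(\cA)$: for a regular cardinal $\kappa$ large compared with $\cA$ and $|Q|$, show that every $X \in \Phi(\cA)$ and every subset of $X$ of cardinality at most $\kappa$ is contained in a pure subobject $Y \subseteq X$ of cardinality at most $\kappa$ with $Y \in \Phi(\cA)$. Iterating gives a $\kappa$-filtration of $X$ by objects of $\Phi(\cA)$, so $\Phi(\cA)$ is deconstructible, and completeness of $(\Phi(\cA),\Phi(\cA)^{\perp})$ follows.

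The main obstacle is the Kaplansky step for $\Phi(\cA)$: each enlargement of a candidate pure subobject $Y$ must simultaneously (a) restore purity in $\cX$, (b) achieve $C_q(Y) \in \cA$ for each vertex $q$, by applying the Kaplansky property of $\cA$ vertex by vertex and using that pure quotients of $\cA$-objects remain in $\cA$, and (c) secure $L_1 C_q(Y) = 0$, which has to be transferred from $X$ through purity. A countable back-and-forth over the finitely many vertices $q \in Q$ is the natural device, but task (c) requires a careful analysis of $L_1 C_q$ via the finite resolutions provided by self-injectivity of $Q$; this is where the bulk of the technical work will lie.
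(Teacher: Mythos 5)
Your part (i) is essentially the paper's own argument: enlarge the generating set by the projective module ${}_{R}R$, apply Lemma \ref{lem:Phi_Psi}(ii) to see that $\{ S_*( \cC ) \}$ generates $\big( {}^{ \perp }\Psi( \cB ),\Psi( \cB ) \big)$, and conclude completeness from generation by a set in the Grothendieck category $\cX$. That half is fine.

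Part (ii) is where there is a genuine gap. Your plan rests on two claims that are not established. First, you assert that closure of $\cA$ under pure quotients, together with $\cA$ being the left half of a cotorsion pair, makes $\cA$ a Kaplansky class ``by a standard argument''. The standard argument takes a small pure subobject $A' \subseteq A$ containing a given element; closure under pure quotients then gives $A/A' \in \cA$, but a Kaplansky class also requires $A' \in \cA$, and for that you would need closure under \emph{pure subobjects}, which is not among the hypotheses and does not follow from them. Second, the central step of your plan --- producing, inside $X \in \Phi( \cA )$, small pure subobjects $Y$ with $Y \in \Phi( \cA )$ (and, for a genuine filtration argument, with the successive quotients in $\Phi( \cA )$ as well) --- is exactly where the same problem recurs: $C_q( Y )$ is a pure \emph{subobject} of $C_q( X ) \in \cA$, so the hypothesis on $\cA$ gives you no control over it; only the quotient side $C_q( X/Y )$ is covered. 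You explicitly defer this step as ``the bulk of the technical work'', so even setting aside the Kaplansky claim, the proposal does not contain a proof, and it is not clear that $\Phi( \cA )$ is deconstructible under the stated hypotheses at all.

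For comparison, the paper avoids deconstructibility entirely. It views $\big( \Phi( \cA ),\Phi( \cA )^{ \perp } \big)$ as a cotorsion pair in ${}_{ \Lambda \otimes R }\!\Mod$ and reduces, via \cite[thm.\ 2.5]{HJ2} together with \cite[lem.\ 5.13(b) and lem.\ 5.20]{GT} (Wakamatsu and Salce type arguments, using that $\Phi( \cA )$ is automatically closed under extensions and direct sums, being a left Ext-orthogonal class), to showing that $\Phi( \cA )$ is closed under pure quotients. That closure is a short direct computation: applying $C_q$ to a pure short exact sequence $\sigma$ over $\Lambda \otimes R$ yields a pure short exact sequence over $R$ (so $C_q( X'' ) \in \cA$ since $\cA$ is closed under pure quotients), and restricting $\sigma$ to $\Lambda$ keeps it pure, so $X''$ is a pure quotient of the $\Lambda$-flat module $X$ (Proposition \ref{pro:self-injective_Ex}), hence $\Lambda$-flat, giving $L_1C_q( X'' ) = 0$. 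If you want to salvage your approach, you would need either to add closure of $\cA$ under pure subobjects to the hypotheses or to find a substitute for the Kaplansky/deconstructibility mechanism; as written, the argument does not go through.
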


\begin{proof}
(i)  Suppose that $\cC$ is a set of objects of $\cM$ which generates $( \cA,\cB )$.  This is still the case after adding the projective $R$-left-module ${}_{ R }R$ to $\cC$.  Then Lemma \ref{lem:Phi_Psi}(ii) says that $\{ S_*( \cC ) \}$ is a set of objects of $\cX$ which generates $\big( {}^{ \perp }\Psi( \cB ),\Psi( \cB ) \big)$.  This cotorsion pair is hence complete by \cite[lem.\ 5.20]{GT} and \cite[def.\ 3.11, prop.\ 3.13, and prop.\ 5.8]{Sto}.  Note that the proof of \cite[lem.\ 5.20]{GT} goes through for $\cX$ because it has enough projective objects and enough injective objects by Paragraph \ref{bfhpg:simple}.

(ii)  As explained in Paragraph \ref{bfhpg:finite_quivers}, we can view $\big( \Phi( \cA ),\Phi( \cA )^{ \perp } \big)$ as a cotorsion pair in ${}_{ \Lambda,R }\!\Mod$, hence as a cotorsion pair in ${}_{ \Lambda \otimes R }\!\Mod$, the category of left-modules over the $k$-algebra $\Lambda \underset{ k }{ \otimes } R$, which will be denoted simply by $\Lambda \otimes R$.  By \cite[lem.\ 5.13(b) and lem.\ 5.20]{GT} and \cite[thm.\ 2.5]{HJ2} it is enough to show that $\Phi( \cA )$ is closed under pure quotients.

Let
\[
  \sigma \;\;=\;\;
  0 \xrightarrow{} X' \xrightarrow{} X \xrightarrow{} X'' \xrightarrow{} 0
\]  
be pure short exact in ${}_{ \Lambda \otimes R }\!\Mod$ with $X \in \Phi( \cA )$.  We will show $X'' \in \Phi( \cA )$, that is, $C_q( X'' ) \in \cA$ and $L_1C_q( X'' ) = 0$ for each $q \in Q$.

Given $q \in Q$ and $B \in \Mod_R$ we have that
\[
  B \underset{ R }{ \otimes } C_q( \sigma ) =
  B \underset{ R }{ \otimes } ( \dual\!S \langle q \rangle \underset{ \Lambda }{ \otimes } \sigma )
  \cong
  ( B \underset{ k }{ \otimes } \dual\!S \langle q \rangle ) \underset{ \Lambda \otimes R }{ \otimes } \sigma
\]
is exact, because $\sigma$ is pure short exact in ${}_{ \Lambda \otimes R }\!\Mod$.  The isomorphism is by \cite[prop.\ IX.2.1]{CE}.  Hence
\[
  C_q( \sigma ) \;\;=\;\;
  0 \xrightarrow{} C_q( X' ) \xrightarrow{} C_q( X ) \xrightarrow{} C_q( X'' ) \xrightarrow{} 0
\]  
is pure short exact in ${}_{ R }\!\Mod$.  But $X \in \Phi( \cA )$ implies $C_q( X ) \in \cA$ whence $C_q( X'' ) \in \cA$ since $\cA$ is closed under pure quotients.

Given $M \in \Mod_{ \Lambda }$ we have that
\[
  M \underset{ \Lambda }{ \otimes } \sigma
  \cong
  M \underset{ \Lambda }{ \otimes } \big( ( \Lambda \otimes R ) \underset{ \Lambda \otimes R }{ \otimes } \sigma \big)
  \cong
  \big( M \underset{ \Lambda }{ \otimes }( \Lambda \otimes R ) \big) \underset{ \Lambda \otimes R }{ \otimes } \sigma
\]
is exact, because $\sigma$ is pure short exact in ${}_{ \Lambda \otimes R }\!\Mod$.  Hence $\sigma$, viewed as a sequence of $\Lambda$-left-modules, is pure short exact.  But $X \in \Phi( \cA )$ implies $X \in \cE$, so $X$, viewed as a $\Lambda$-left-module, is flat by Proposition \ref{pro:self-injective_Ex}.  It follows that $X''$, viewed as a $\Lambda$-left-module, is flat, so $L_1C_q( X'' ) = 0$.
\end{proof}

We end this section with a description of the trivial objects in the model category structure on $\cX$ provided by Theorems \ref{thm:Hovey} and \ref{thm:self-injective_main}.  We thank an anonymous referee for drawing attention to this question.

\begin{Theorem}
\label{thm:W}
If $( \cA,\cB )$ is a hereditary cotorsion pair in $\cM = {}_{ R }\!\Mod$, and either of the cotorsion pairs $\big( \Phi( \cA ),\Phi( \cA )^{ \perp } \big)$ and $\big( {}^{ \perp }\Psi( \cB ),\Psi( \cB ) \big)$ of Theorem \ref{thm:self-injective_main} is complete (see Theorem \ref{thm:self-injective_complete}), then the class $\cW$ of trivial objects (defined in Theorem \ref{thm:Hovey}) satisfies $\cW = \cE$.
\end{Theorem}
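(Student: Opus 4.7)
The plan is to prove the two inclusions $\cW \subseteq \cE$ and $\cE \subseteq \cW$ separately. Each follows from a short manipulation of a long exact sequence of derived functors, combined with the formulae
\[
  \Phi(\cA)^\perp \cap \cE = \Psi(\cB)
  \;\;\mbox{and}\;\;
  {}^\perp\Psi(\cB) \cap \cE = \Phi(\cA)
\]
(Equations \eqref{equ:CTCPii} and \eqref{equ:CTCPi} from the proof of Theorem \ref{thm:CTCP}) and the vanishings $L_2 C_q(\cE_L) = 0$ and $R^2 K_q(\cE_R) = 0$ (supplied by Proposition \ref{pro:self-injective_Ex} and underpinning the heredity in Theorem \ref{thm:self-injective_main}).

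For the inclusion $\cW \subseteq \cE$, I would pick $X \in \cW$ and use one of the two short exact sequences of Theorem \ref{thm:Hovey}, say $0 \to X \to P' \to F' \to 0$ with $P' \in \Psi(\cB)$ and $F' \in \Phi(\cA)$. Applying $C_q$ and reading the tail of the long exact sequence for left-derived functors gives
\[
  L_2 C_q(F') \to L_1 C_q(X) \to L_1 C_q(P') \to L_1 C_q(F').
\]
Here $L_1 C_q(P') = 0$ since $P' \in \Psi(\cB) \subseteq \cE = \cE_L$, $L_1 C_q(F') = 0$ since $F' \in \Phi(\cA) \subseteq \cE$, and $L_2 C_q(F') = 0$ by $L_2 C_q(\cE_L) = 0$ applied to $F' \in \cE$. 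Hence $L_1 C_q(X) = 0$ for every $q \in Q$, so $X \in \cE_L = \cE$.

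For the inclusion $\cE \subseteq \cW$, suppose that $\big( \Phi(\cA), \Phi(\cA)^\perp \big)$ is the cotorsion pair which is complete (the other case is entirely dual). For $X \in \cE$, completeness yields a short exact sequence $0 \to Y \to F \to X \to 0$ with $F \in \Phi(\cA)$ and $Y \in \Phi(\cA)^\perp$. The segment
\[
  L_2 C_q(X) \to L_1 C_q(Y) \to L_1 C_q(F)
\]
of the long exact sequence has both outer terms zero ($L_2 C_q(X) = 0$ by $L_2 C_q(\cE_L) = 0$ applied to $X \in \cE$; $L_1 C_q(F) = 0$ since $F \in \Phi(\cA) \subseteq \cE$), so $L_1 C_q(Y) = 0$ for every $q$ and $Y \in \cE$. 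Equation \eqref{equ:CTCPii} then gives $Y \in \Phi(\cA)^\perp \cap \cE = \Psi(\cB)$, so the sequence $0 \to Y \to F \to X \to 0$ witnesses $X \in \cW$. The dual case uses the right-derived functors $R^i K_q$, the vanishing $R^2 K_q(\cE_R) = 0$, and Equation \eqref{equ:CTCPi}.

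No substantive obstacle is anticipated: the proof is essentially a bookkeeping consequence of completeness, compatibility, and heredity of the cotorsion pairs, all of which has been isolated in earlier results. The structural content one is really using is that $\cE$ is closed under kernels of epimorphisms and cokernels of monomorphisms whose two other terms lie in $\cE$, which is exactly what the second-derived vanishings $L_2 C_q(\cE_L) = R^2 K_q(\cE_R) = 0$ deliver.
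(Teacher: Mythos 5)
Your proof is correct and follows essentially the same route as the paper: both inclusions are deduced from the Hovey sequences, completeness, and Equations \eqref{equ:CTCPi}/\eqref{equ:CTCPii}, whose applicability rests on Propositions \ref{pro:self-injective_Ex}, \ref{pro:self_injective_Comp1}, and \ref{pro:25}. The only cosmetic difference is that you propagate membership in $\cE$ through short exact sequences via the long exact sequences of $L_iC_q$ and $R^iK_q$ together with the vanishings $L_2C_q(\cE)=R^2K_q(\cE)=0$, whereas the paper performs the same step by observing that objects of $\cE$ are projective/injective when viewed in ${}_{Q}\!\Mod$ (Proposition \ref{pro:self-injective_Ex}).
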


\begin{proof}
The inclusion $\cW \subseteq \cE$:  By the first part of the first formula of Theorem \ref{thm:Hovey}, each $W \in \cW$ sits in a short exact sequence $0 \xrightarrow{} P \xrightarrow{} F \xrightarrow{} W \xrightarrow{} 0$ in $\cX$ with $P \in \Psi( \cB )$, $F \in \Phi( \cA )$.  We have $P,F \in \cE$ by Definition \ref{def:Phi_Psi}.  By Proposition \ref{pro:self-injective_Ex} this means that $P$ and $F$ are injective when viewed as objects of ${}_{Q}\Mod$.  The same is hence true for $W$, so $W \in \cE$ by Proposition \ref{pro:self-injective_Ex} again.

The inclusion $\cW \supseteq \cE$:  First, suppose $\big( \Phi( \cA ),\Phi( \cA )^{ \perp } \big)$ is complete.  Given $E \in \cE$ there is a short exact sequence $0 \xrightarrow{} Q \xrightarrow{} F \xrightarrow{} E \xrightarrow{} 0$ in $\cX$ with $Q \in \Phi( \cA )^{ \perp }$, $F \in \Phi( \cA )$.  By the first part of the first formula of Theorem \ref{thm:Hovey} it is enough to show $Q \in \Psi( \cB )$.  We have $F \in \cE$ by Definition \ref{def:Phi_Psi}.  Proposition \ref{pro:self-injective_Ex} says that $F$ and $E$ are projective when viewed as objects of ${}_{Q}\Mod$.  The same is hence true for $Q$, so $Q \in \cE$ by Proposition \ref{pro:self-injective_Ex} again.  Hence $Q \in \Phi( \cA )^{ \perp } \cap \cE$.  It follows that, as desired, $Q \in \Psi( \cB )$, see Equation \eqref{equ:CTCPii} in the proof of Theorem \ref{thm:CTCP}.  The theorem applies because conditions (Comp1), (Ex), and (Seq) (see Definitions \ref{def:cotorsion}(iv) and \ref{def:ExSeq}) hold by Propositions \ref{pro:self-injective_Ex}, \ref{pro:self_injective_Comp1}, and \ref{pro:25} below.

Secondly, suppose $\big( {}^{ \perp }\Psi( \cB ),\Psi( \cB ) \big)$ is complete.  Then $\cW \supseteq \cE$ is proved similarly using the second part of the first formula of Theorem \ref{thm:Hovey} and Equation \eqref{equ:CTCPi}. 
\end{proof}

\section{Condition (Ex)}
\label{sec:Ex}

Section \ref{sec:Ex} continues to use Setup \ref{set:self-injective}.  The aim is to prove Proposition \ref{pro:self-injective_Ex}, by which condition (Ex) of Definition \ref{def:ExSeq}
holds.  This is used in the proof of Theorem \ref{thm:self-injective_main}.  We also establish some other properties of the class $\cE$.

\begin{Lemma}
\label{lem:self-injective_standard_isomorphisms}
If $i \geqslant 0$ and $q \in Q$ then 
\begin{enumerate}
\setlength\itemsep{4pt}

  \item  $L_iC_q( - ) = \Tor^Q_i( \dual\!S \langle q \rangle,- )$,

  \item  $R^iK_q( - ) = \Ext_Q^i( S \langle q \rangle,- )$,

\end{enumerate}
and there are isomorphisms in ${}_{ R }\!\Mod$,
\begin{enumerate}
\setcounter{enumi}{2}
\setlength\itemsep{4pt}

  \item  $L_iC_q( M \underset{ k }{ \otimes } B ) \cong \Tor^Q_i( \dual\! S \langle q \rangle,M ) \underset{ k }{ \otimes } B$,

  \item  $R^i K_q( M \underset{ k }{ \otimes } B ) \cong \Ext_Q^i( S \langle q \rangle, M ) \underset{ k }{ \otimes } B$,

\end{enumerate}
natural in $M \in {}_{ Q }\!\Mod$ and $B \in {}_{ R }\!\Mod$.
\end{Lemma}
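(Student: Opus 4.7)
I would prove the four statements in order, with (i) and (ii) serving as stepping stones for (iii) and (iv) respectively. For (i), $C_q$ is by definition the tensor functor $\dual\!S\langle q\rangle \underset{ Q }{ \otimes } -$ from ${}_{Q,R}\!\Mod$ to ${}_{R}\!\Mod$. To identify its left derived functors with $\Tor^Q_i(\dual\!S\langle q\rangle,-)$, I would verify, using the description of projective objects of ${}_{Q,R}\!\Mod$ recorded in Appendix \ref{app:functor}, that every projective object of ${}_{Q,R}\!\Mod$ is flat when regarded as an object of ${}_{Q}\!\Mod$. Once this is granted, a projective resolution $P_\bullet \to X$ in ${}_{Q,R}\!\Mod$ is simultaneously a flat resolution in ${}_{Q}\!\Mod$ (exactness is checked vertex by vertex in ${}_{R}\!\Mod$), so $L_iC_q(X) = H_i(\dual\!S\langle q\rangle \underset{ Q }{ \otimes } P_\bullet) = \Tor_i^Q(\dual\!S\langle q\rangle,X)$. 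Statement (ii) is formally dual: I would check via Appendix \ref{app:functor} that injectives in ${}_{Q,R}\!\Mod$ are injective (or at least $\Ext_Q^{\geqslant 1}(S\langle q\rangle,-)$-acyclic) in ${}_{Q}\!\Mod$, so that an injective resolution of $X$ in ${}_{Q,R}\!\Mod$ computes $R^iK_q(X)=\Ext_Q^i(S\langle q\rangle,X)$.

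For (iii), take a projective resolution $P_\bullet \to M$ in ${}_{Q}\!\Mod$. Since $k$ is a field, $B$ is $k$-free, so each $P_j \underset{ k }{ \otimes } B$ is a direct sum of copies of $P_j$ and hence projective (and so flat) in ${}_{Q}\!\Mod$, and $P_\bullet \underset{ k }{ \otimes } B \to M \underset{ k }{ \otimes } B$ is exact. Combining (i) with the standard associativity isomorphism $\dual\!S\langle q\rangle \underset{ Q }{ \otimes } (P_j \underset{ k }{ \otimes } B) \cong (\dual\!S\langle q\rangle \underset{ Q }{ \otimes } P_j) \underset{ k }{ \otimes } B$ and the exactness of $- \underset{ k }{ \otimes } B$, I obtain
\[
  L_iC_q(M \underset{ k }{ \otimes } B)
    \cong H_i\bigl(\dual\!S\langle q\rangle \underset{ Q }{ \otimes } P_\bullet\bigr) \underset{ k }{ \otimes } B
    \cong \Tor_i^Q(\dual\!S\langle q\rangle,M) \underset{ k }{ \otimes } B,
\]
naturally in $M$ and $B$.

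Statement (iv) is the dual story: I would take an injective resolution $I^\bullet \to M$ in ${}_{Q}\!\Mod$. Since $Q$ satisfies \emph{(SelfInj)}, each $I^j$ is also projective, so $I^j \underset{ k }{ \otimes } B$ is a direct sum of copies of $I^j$, which is projective and therefore injective in ${}_{Q}\!\Mod$ (arbitrary coproducts of projectives $=$ injectives remain injective by Paragraph \ref{bfhpg:flat_and_injective}). After tensoring with $B$ we still have an exact resolution of $M \underset{ k }{ \otimes } B$ consisting of $K_q$-acyclic objects by (ii). Invoking the tensor-hom compatibility
\[
  \Hom_Q(S\langle q\rangle, I^j \underset{ k }{ \otimes } B) \cong \Hom_Q(S\langle q\rangle, I^j) \underset{ k }{ \otimes } B,
\]
which holds because $S\langle q\rangle$ is finitely presented (condition \emph{(Fin)}), and passing to cohomology gives the desired isomorphism.

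The main obstacle is the bookkeeping between ${}_{Q}\!\Mod$ and ${}_{Q,R}\!\Mod$: one has to confirm that projective (respectively injective) objects of ${}_{Q,R}\!\Mod$ remain projective (respectively injective), or at least suitably acyclic, in ${}_{Q}\!\Mod$, and that ${}_{Q}\!\Mod$-projective or injective resolutions, once tensored with $B$ over $k$, remain acyclic resolutions appropriate for computing $L_iC_q$ or $R^iK_q$ in ${}_{Q,R}\!\Mod$. These structural facts all rest on Appendix \ref{app:functor} and on the condition \emph{(SelfInj)} through the equality of projectives, injectives and flats; everything else is standard manipulation with associativity, tensor-hom, and the exactness of $- \underset{ k }{ \otimes } B$.
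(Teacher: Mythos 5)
Your proposal is correct and takes essentially the same route as the paper: the paper's proof just cites the definitions \eqref{equ:self-injective_CSK} together with the appendix facts of Paragraph \ref{bfhpg:Tor_and_Ext} (that $\Tor^Q$ and $\Ext_Q$ carry the $R$-structure and commute with $- \underset{ k }{ \otimes } B$, the $\Ext$ case using that $S \langle q \rangle$ has finite length via Paragraph \ref{bfhpg:standard}(v)), and your resolution and acyclicity arguments are exactly what lies behind those citations. The only cosmetic difference is that in (iv) you invoke (SelfInj) to keep $I^j \underset{ k }{ \otimes } B$ injective, whereas the paper's route through Paragraph \ref{bfhpg:Tor_and_Ext}(ii) needs only the finite length of $S \langle q \rangle$; since (SelfInj) is part of Setup \ref{set:self-injective}, this is harmless.
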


\begin{proof}
Parts (i) and (ii) follow from Equation \eqref{equ:self-injective_CSK} and Paragraph \ref{bfhpg:Tor_and_Ext}.  Parts (iii) and (iv) follow from parts (i) and (ii) combined with Paragraph \ref{bfhpg:Tor_and_Ext}, parts (iii) and (ii).
\end{proof}

\begin{Proposition}
\label{pro:self-injective_Ex}
In the situation of Setup \ref{set:self-injective}, condition (Ex) of Definition \ref{def:ExSeq} holds, that is $\cE_L = \cE_R$.  We write $\cE = \cE_L = \cE_R$ and have
\begin{align*}
  \cE
  & = \{\, X \in \cX \mid \mbox{$X$ is projective when viewed as an object of ${}_{ Q }\!\Mod$} \,\} \\[1mm]
  & = \{\, X \in \cX \mid \mbox{$X$ is flat when viewed as an object of ${}_{ Q }\!\Mod$} \,\} \\[1mm]
  & = \{\, X \in \cX \mid \mbox{$X$ is injective when viewed as an object of ${}_{ Q }\!\Mod$} \,\}.
\end{align*}
\end{Proposition}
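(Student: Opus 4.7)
The plan is to identify the class $\cE$ with a known homological class of $Q$-modules by a ``test on simples'' argument, and then apply the self-injectivity of $Q$ to collapse flat, projective, and injective into a single class.

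First I would rewrite both defining conditions homologically. By Lemma \ref{lem:self-injective_standard_isomorphisms}(i)--(ii),
\[
  X \in \cE_L \iff \Tor^Q_1(\dual\!S\langle q\rangle, X) = 0 \text{ for all } q\in Q,
\qquad
  X \in \cE_R \iff \Ext_Q^1(S\langle q\rangle, X) = 0 \text{ for all } q\in Q,
\]
where $X$ is regarded as an object of ${}_{Q}\!\Mod$ by forgetting the $R$-structure (the $\Tor$ and $\Ext$ groups carry a residual $R$-structure, but vanishing is the same question in ${}_{R}\!\Mod$ as in ${}_{k}\!\Mod$).

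The main step is then to prove the two characterisations
\[
  \cE_L = \{\,X \in \cX \mid X \text{ is flat in } {}_{Q}\!\Mod\,\},
\qquad
  \cE_R = \{\,X \in \cX \mid X \text{ is injective in } {}_{Q}\!\Mod\,\}.
\]
The nontrivial directions use the hypotheses (Fin) and (Rad) from Definition \ref{def:FinRadSelfInj}, which together say that $Q$ is a locally bounded spectroid; in particular the radical is nilpotent and every finitely generated object of ${}_{Q}\!\Mod$ (respectively $\Mod_Q$) has finite length with composition factors among the simples $S\langle q\rangle$ (respectively $\dual\!S\langle q\rangle$). For $\cE_L \subseteq \{\text{flat}\}$: any $N \in \Mod_Q$ is a filtered colimit of finitely generated subfunctors $N_i$, and since $\Tor^Q_1$ commutes with filtered colimits in the first argument, it suffices to show $\Tor^Q_1(N_i, X) = 0$; a dévissage along the finite composition series of $N_i$, using the long exact sequence of $\Tor$, reduces this to the assumed vanishing on simples. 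For $\cE_R \subseteq \{\text{injective}\}$: one uses Baer's criterion in the Grothendieck category ${}_{Q}\!\Mod$, which in this locally bounded setting can be phrased as testing $\Ext^1_Q(-,X)$ on finitely generated modules; another dévissage along composition series reduces the test to simples. The reverse inclusions are immediate from the definitions of flatness and injectivity.

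Finally I would invoke the third hypothesis (SelfInj), via the reference Paragraph \ref{bfhpg:flat_and_injective}, which states that in the presence of a Serre functor on $Q$, the classes of projective, flat, and injective objects in ${}_{Q}\!\Mod$ all coincide. Combining this with the two characterisations above yields $\cE_L = \cE_R$ and the triple description of $\cE$. I expect the main obstacle to be the Baer-style reduction for $\cE_R \subseteq \{\text{injective}\}$, where one has to confirm that in the functor category over a locally bounded spectroid, testing $\Ext^1$-vanishing on simples really does imply injectivity; the companion Tor statement for $\cE_L$ is cleaner because $\Tor$ commutes with filtered colimits without extra hypotheses on $X$.
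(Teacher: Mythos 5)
Your proposal is correct and follows essentially the same route as the paper: rewrite the defining conditions via Lemma \ref{lem:self-injective_standard_isomorphisms}(i)--(ii) as vanishing of $\Tor^Q_1(\dual\!S\langle q\rangle,-)$ and $\Ext^1_Q(S\langle q\rangle,-)$, identify $\cE_L$ and $\cE_R$ with the flat and injective objects of ${}_{Q}\!\Mod$, and then collapse flat, projective, and injective using (SelfInj) via Paragraph \ref{bfhpg:flat_and_injective}. The only difference is that you sketch d\'evissage/Baer-type proofs of the flatness and injectivity criteria, which the paper simply cites as Equations \eqref{equ:criterion_for_flatness}, \eqref{equ:criterion_for_injectivity}, and \eqref{equ:flat_v_injective} from the appendix.
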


\begin{proof}
Combining Definition \ref{def:Phi_Psi} and Lemma \ref{lem:self-injective_standard_isomorphisms}(i) shows
\[
  \cE_L
  = \{\, X \in \cX \mid \mbox{If $q \in Q$ then $\Tor^Q_1( \dual\!S \langle q \rangle,X ) = 0$} \,\}.
\]
Combining this with Equation \eqref{equ:criterion_for_flatness} proves
\[
  \cE_L = \{\, X \in \cX \mid \mbox{$X$ is flat when viewed as an object of ${}_{ Q }\!\Mod$} \,\}.
\]
Similarly, combining Definition \ref{def:Phi_Psi}, Lemma \ref{lem:self-injective_standard_isomorphisms}(ii), and Equation \eqref{equ:criterion_for_injectivity} proves 
\[
  \cE_R = \{\, X \in \cX \mid \mbox{$X$ is injective when viewed as an object of ${}_{ Q }\!\Mod$} \,\}.
\]
The proposition now follows from Equation \eqref{equ:flat_v_injective}.
\end{proof}

\begin{Lemma}
\label{lem:24}
If $M \in {}_{ Q }\!\Mod$ has finite length and $I \in {}_{ R }\!\Mod$ is injective, then $M \underset{ k }{ \otimes } I \in \cE^{ \perp }$.  
\end{Lemma}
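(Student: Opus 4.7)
The plan is to proceed by induction on the length of $M$, reducing to the simple case.

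For the base case, suppose $M$ is simple. By Paragraph \ref{bfhpg:simple} we may write $M = S \langle q \rangle$ for some $q \in Q$, and then by the definition in \eqref{equ:self-injective_CSK} we have $M \underset{k}{\otimes} I = S_q(I)$. Given $E \in \cE$, I would apply Lemma \ref{lem:five_term_exact_sequence} to the adjoint pair $(C_q, S_q)$ with $X = E$ and $N = I$; note that $S_q = S\langle q \rangle \underset{k}{\otimes} -$ is exact because $k$ is a field. This yields the exact sequence
\[
0 \to \Ext^1_{\cM}(C_q E, I) \to \Ext^1_{\cX}(E, S_q I) \to \Hom_{\cM}(L_1 C_q E, I).
\]
The left-hand term vanishes because $I$ is injective in $\cM = {}_{R}\!\Mod$, and the right-hand term vanishes because $E \in \cE = \cE_L$, so $L_1 C_q(E) = 0$ by Definition \ref{def:Phi_Psi}. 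Hence $\Ext^1_{\cX}(E, S_q I) = 0$, showing $S_q I = M \underset{k}{\otimes} I \in \cE^{\perp}$.

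For the inductive step, assume the claim holds for all objects of ${}_{Q}\!\Mod$ of length less than $\mathrm{length}(M)$. Choose a short exact sequence $0 \to M' \to M \to M'' \to 0$ in ${}_{Q}\!\Mod$ with $M''$ simple. Because $k$ is a field, the functor $- \underset{k}{\otimes} I$ is exact, so
\[
0 \to M' \underset{k}{\otimes} I \to M \underset{k}{\otimes} I \to M'' \underset{k}{\otimes} I \to 0
\]
is short exact in $\cX$. By the induction hypothesis the outer two terms are in $\cE^{\perp}$, and the class $\cE^{\perp}$ is closed under extensions (via the long exact $\Ext$-sequence $\Ext^1_{\cX}(E, -)$ applied to any $E \in \cE$). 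Therefore $M \underset{k}{\otimes} I \in \cE^{\perp}$.

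The substantive step is the base case, whose content is precisely the interplay between the injectivity of $I$ over $R$ and the defining vanishing $L_1 C_q(E) = 0$ for $E \in \cE$; the inductive step is purely formal.
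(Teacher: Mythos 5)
Your proof is correct. It follows the same two-stage outline as the paper's proof (settle the simple case, then pass to general finite-length $M$ via a filtration/extension-closure argument, which your induction on length reproduces exactly), but the way you handle the essential simple case is genuinely different. The paper computes $\Ext^1_{Q,R}\big(E,\Hom_k(\dual\!S\langle q\rangle,I)\big)$ by hand from a projective resolution of $E$ in $\cX$: it uses the adjunction of Paragraph \ref{bfhpg:standard}(ii), the injectivity of $I$ to commute $\Hom_R(-,I)$ past homology, the vanishing of $\Tor^Q_1(\dual\!S\langle q\rangle,E)$ via Proposition \ref{pro:self-injective_Ex}, and finally the identification $S\langle q\rangle \underset{k}{\otimes} I \cong \Hom_k(\dual\!S\langle q\rangle,I)$ from Paragraph \ref{bfhpg:standard}(vi). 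You instead invoke the five-term exact sequence of Lemma \ref{lem:five_term_exact_sequence} for the adjoint pair $(C_q,S_q)$ and sandwich $\Ext^1_{\cX}(E,S_qI)$ between $\Ext^1_{\cM}(C_qE,I)=0$ (injectivity of $I$) and $\Hom_{\cM}(L_1C_qE,I)=0$ (the defining property of $\cE_L$). Your route is shorter and stays at the level of the abstract Setup \ref{set:blanket}: it needs only the adjunction, exactness of $S_q$, and membership in $\cE_L$, and it bypasses both Proposition \ref{pro:self-injective_Ex} and the duality isomorphism. The paper's computation, on the other hand, avoids a second appeal to the spectral-sequence lemma and makes the underlying $\Tor$-mechanism (flatness of $E$ over $Q$) explicit, which ties in with how $\cE$ is used elsewhere in Section \ref{sec:Ex}. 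The only cosmetic omission is the trivial case $M=0$ (or, equivalently, starting the induction at length $1$), which is harmless since $0 \in \cE^{\perp}$.
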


\begin{proof}
Let $E \in \cE$ have the projective resolution $P$ in $\cX = {}_{ Q,R }\!\Mod$.  Then
\begin{align*}
  \Ext_{ Q,R }^1 \big( E,\Hom_k( \dual\!S \langle q \rangle,I ) \big)
  & \cong
  \H^1 \Hom_{ Q,R }\big( P,\Hom_k( \dual\!S \langle q \rangle,I ) \big) \\
  & \stackrel{ \rm (a) }{ \cong }
  \H^1 \Hom_R ( \dual\!S \langle q \rangle \underset{ Q }{ \otimes } P,I ) \\
  & \stackrel{ \rm (b) }{ \cong }
  \Hom_R \big( \H_1( \dual\!S \langle q \rangle \underset{ Q }{ \otimes } P),I \big) \\
  & \stackrel{ \rm (c) }{ \cong } \Hom_R \big( \Tor^Q_1( \dual\!S \langle q \rangle,E ),I \big) \\
  & \stackrel{ \rm (d) }{ \cong }
  \Hom_R \big( 0,I \big) \\
  & = 0.
\end{align*}
Here (a) is by Paragraph \ref{bfhpg:standard}(ii) and (b) is because $I$ is injective.  The isomorphism (c) is because $P$ consists of projective objects in $\cX$, which are also projective when viewed as objects of ${}_{ Q }\!\Mod$ by Paragraph \ref{bfhpg:simple}(iv).  Finally, (d) is by Proposition \ref{pro:self-injective_Ex}.

Hence $\Hom_k( \dual\!S \langle q \rangle,I ) \in \cE^{ \perp }$, and Paragraph \ref{bfhpg:standard}(vi) gives
$S \langle q \rangle \underset{ k }{ \otimes } I \in \cE^{ \perp }$.  However, by Paragraph \ref{bfhpg:simple}(i) the object $M$ has a finite filtration with quotients of the form $S \langle q \rangle$ for $q \in Q$, so by Paragraph \ref{bfhpg:tensor_and_hom}(iv) the object $M \underset{ k }{ \otimes } I$ has a finite filtration with quotients of the form $S \langle q \rangle \underset{ k }{ \otimes } I$ for $q \in Q$, and it follows that $M \underset{ k }{ \otimes } I \in \cE^{ \perp }$ as claimed.  
\end{proof}

\begin{Lemma}
\label{lem:self-injective_E_perp}
We have $\cE^{ \perp } = \cE^{ \perp_{ \infty } }$ as subcategories of $\cX$ where $\cE^{ \perp_{ \infty } } = \{\, X \in \cX \mid \Ext_{ \cX }^{ \geqslant 1 }( \cE,X ) = 0 \,\}$.
\end{Lemma}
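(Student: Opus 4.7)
The inclusion $\cE^{\perp_{\infty}} \subseteq \cE^{\perp}$ is immediate. For the reverse, my plan is to establish that $\cE$ is ``syzygy-closed'' inside $\cX$, so that higher Ext-vanishing against any $X \in \cE^{\perp}$ follows by a one-line dimension shift.

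First I would note that every projective object of $\cX$ already lies in $\cE$. Indeed, by Paragraph \ref{bfhpg:simple}(iv) a projective object of $\cX = {}_{Q,R}\!\Mod$ remains projective (hence flat) when viewed in ${}_{Q}\!\Mod$, so it belongs to $\cE$ by the characterisation of $\cE$ as the class of objects flat over $Q$ given in Proposition \ref{pro:self-injective_Ex}.

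Next, I would show: if $0 \to E' \to P \to E \to 0$ is a short exact sequence in $\cX$ with $P$ projective in $\cX$ and $E \in \cE$, then $E' \in \cE$. By Paragraph \ref{bfhpg:functor_categories}, exactness in $\cX$ implies exactness in ${}_{Q}\!\Mod$, and both $P$ and $E$ are flat in ${}_{Q}\!\Mod$ by the previous step together with Proposition \ref{pro:self-injective_Ex}. The $\Tor^{Q}$-long exact sequence then forces $\Tor_{1}^{Q}(-,E') = 0$, so $E'$ is flat over $Q$ and hence lies in $\cE$, again by Proposition \ref{pro:self-injective_Ex}.

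Finally, given $X \in \cE^{\perp}$, I would prove $\Ext^{i}_{\cX}(E,X) = 0$ for every $E \in \cE$ and $i \geqslant 1$ by induction on $i$; the case $i = 1$ is the hypothesis $X \in \cE^{\perp}$. For the inductive step, choose a short exact sequence $0 \to E' \to P \to E \to 0$ in $\cX$ with $P$ projective; by the previous paragraph $E' \in \cE$. The long exact sequence of $\Ext_{\cX}(-,X)$ gives $\Ext^{i+1}_{\cX}(E,X) \cong \Ext^{i}_{\cX}(E',X)$, which vanishes by the inductive hypothesis. The only delicate point is the intrinsic description of $\cE$ from Proposition \ref{pro:self-injective_Ex}, which is what makes the class resolving and permits the dimension shift; without it, the definition of $\cE$ in terms of the individual derived functors $L_{1}C_{q}$ would give no obvious reason for closure under syzygies.
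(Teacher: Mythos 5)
Your proposal is correct and follows essentially the same route as the paper: both reduce the statement to showing that $\cE$ is closed under syzygies, using Paragraph \ref{bfhpg:simple}(iv) together with the characterisation of $\cE$ in Proposition \ref{pro:self-injective_Ex}, and then conclude by dimension shifting (the paper cites the proof of \cite[cor.\ 5.25]{GT} where you carry out the induction explicitly). The only cosmetic difference is that you verify syzygy-closure via flatness and the $\Tor^Q$ long exact sequence, whereas the paper uses projectivity over $Q$; these are interchangeable here since flat, projective, and injective objects of ${}_{Q}\!\Mod$ coincide by Proposition \ref{pro:self-injective_Ex}.
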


\begin{proof}
The proof of \cite[cor.\ 5.25]{GT} goes through for $\cX = {}_{ Q,R }\!\Mod$, so it is enough to see that $\cE$ is closed under syzygies.  Let $0 \rightarrow \Omega E \rightarrow P \rightarrow E \rightarrow 0$ be a short exact sequence in $\cX$ with $P$ projective and $E \in \cE$.  Then $P$ and $E$ are projective when viewed in ${}_{ Q }\!\Mod$, see Proposition \ref{pro:self-injective_Ex} and Paragraph \ref{bfhpg:simple}(iv).  Hence $\Omega E$ is projective when viewed in ${}_{ Q }\!\Mod$, so $\Omega E \in \cE$ by Proposition \ref{pro:self-injective_Ex}.
\end{proof}

\section{Condition (Comp1)}
\label{sec:Comp1}

Section \ref{sec:Comp1} continues to use Setup \ref{set:self-injective}.  The aim is to prove Proposition \ref{pro:self_injective_Comp1}, by which condition (Comp1) of Definition \ref{def:cotorsion}(iv) holds.  This is used in the proof of Theorem \ref{thm:self-injective_main}.

\begin{Lemma}
\label{lem:self-injective_SqA_in_perp_Psi}
If $q \in Q$ and $A \in \cA$, then $S_q( A ) \in {}^{ \perp }\Psi( \cB )$.
\end{Lemma}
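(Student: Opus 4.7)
The plan is to apply the dual five term exact sequence (Lemma \ref{lem:five_term_exact_sequence_dual}) to the adjoint pair $(S_q,K_q)$, which is available because $S_q$ is exact (as the left adjoint of $K_q$ and the right adjoint of $C_q$, cf.\ Setup \ref{set:blanket}). Taking an arbitrary $X \in \Psi(\cB)$ and $N = A \in \cA$, this sequence reads
\[
  0
  \to \Ext^1_{\cM}(A, K_q X)
  \to \Ext^1_{\cX}(S_q A, X)
  \to \Hom_{\cM}(A, R^1 K_q X)
  \to \Ext^2_{\cM}(A, K_q X) \to \cdots.
\]
To conclude $\Ext^1_{\cX}(S_q A, X) = 0$ it then suffices to kill the outer two terms.

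First, by definition of $\Psi(\cB)$ we have $K_q X \in \cB$ and $R^1 K_q X = 0$. The vanishing $R^1 K_q X = 0$ forces $\Hom_{\cM}(A, R^1 K_q X) = 0$ immediately. The vanishing $\Ext^1_{\cM}(A, K_q X) = 0$ follows from the defining property of the cotorsion pair $(\cA, \cB)$ since $A \in \cA$ and $K_q X \in \cB$. The squeezed middle term is therefore zero, which is exactly what was to be shown.

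There is no real obstacle here; the lemma is a direct translation of the definition of $\Psi(\cB)$ through the adjunction $(S_q, K_q)$. The only conceptual point worth noting in the write-up is that the sequence of Lemma \ref{lem:five_term_exact_sequence_dual} is precisely the right tool: the two constraints built into membership in $\Psi(\cB)$, namely $K_q X \in \cB$ (controlling $\Ext^1_{\cM}$) and $R^1 K_q X = 0$ (controlling $\Hom_{\cM}$), match exactly the two terms flanking $\Ext^1_{\cX}(S_q A, X)$.
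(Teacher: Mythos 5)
Your proof is correct and is essentially the paper's own argument: the paper likewise applies the dual five term exact sequence of Lemma \ref{lem:five_term_exact_sequence_dual} to the adjoint pair $( S_q,K_q )$, uses $R^1K_q( X ) = 0$ to identify $\Ext^1_{ \cX }( S_qA,X )$ with $\Ext^1_{ \cM }( A,K_qX )$, and kills the latter via $A \in \cA$, $K_q( X ) \in \cB$. The only cosmetic difference is that the paper justifies exactness of $S_q$ by Paragraph \ref{bfhpg:tensor_and_hom}(iv) rather than by the existence of both adjoints, which is equally valid.
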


\begin{proof}
The categories $\cM$ and $\cX$ have enough projective and injective objects by Paragraph \ref{bfhpg:simple}, and the functor $S_q( - ) = S \langle q \rangle \underset{ k }{ \otimes } -$ is exact by Paragraph \ref{bfhpg:tensor_and_hom}(iv).  Given $Y \in \Psi( \cB )$ we have $R^1K_q( Y ) = 0$ by definition of $\Psi( \cB )$, so Lemma \ref{lem:five_term_exact_sequence_dual} gives an isomorphism $\Ext_{ \cM }^1( A,K_qY ) \cong \Ext_{ \cX }^1( S_qA,Y )$.  The first $\Ext$ is zero since $K_q( Y ) \in \cB$ by definition of $\Psi( \cB )$, so the lemma follows. 
\end{proof}

\begin{Proposition}
\label{pro:self_injective_Comp1}
In the situation of Setup \ref{set:self-injective}, condition (Comp1) of Definition \ref{def:cotorsion} holds.
\end{Proposition}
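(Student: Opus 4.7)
The plan is to show $\Phi(\cA) \subseteq {}^{\perp}\Psi(\cB)$, i.e.\ $\Ext^1_{\cX}(X,Y)=0$ for all $X \in \Phi(\cA)$ and $Y \in \Psi(\cB)$. Lemma \ref{lem:self-injective_SqA_in_perp_Psi} handles the case $X = S_q(A)$ with $A \in \cA$, so my strategy is to express a general $X \in \Phi(\cA)$ as a finite iterated extension of coproducts of such building blocks, then invoke closure of ${}^{\perp}\Psi(\cB)$ under coproducts and extensions.

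First I would use condition (Fin) of Definition \ref{def:FinRadSelfInj}, which gives $\rad_Q^N = 0$, to obtain the finite filtration
\[
  X \;=\; \rad_Q^0 X \;\supseteq\; \rad_Q X \;\supseteq\; \cdots \;\supseteq\; \rad_Q^N X \;=\; 0.
\]
Each subquotient $\rad_Q^i X / \rad_Q^{i+1} X$ is semisimple as an object of ${}_Q\Mod$, hence of the form $\bigoplus_{q \in Q} S_q(V_q^i)$ for certain $V_q^i \in \cM$. The central step is to show $V_q^i \in \cA$. Since $X \in \Phi(\cA) \subseteq \cE$, Proposition \ref{pro:self-injective_Ex} makes $X$ projective as an object of ${}_Q\Mod$, and the structure theory of such projectives under (Fin)+(Rad) produces a decomposition $X \cong \bigoplus_{q'} Q(q',-) \otimes_k C_{q'}(X)$ in $\cX$, with each $C_{q'}(X) \in \cA$ by hypothesis. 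Applying the radical filtration to each summand and using that $\rad_Q^i Q(q',-)/\rad_Q^{i+1} Q(q',-)$ is a finite direct sum of simples $S\langle q \rangle$, one identifies each $V_q^i$ as a coproduct of copies of the $C_{q'}(X)$'s; since $\cA$ is closed under coproducts (as the left side of a cotorsion pair in the Grothendieck category $\cM$), it follows that $V_q^i \in \cA$.

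With this in hand, Lemma \ref{lem:self-injective_SqA_in_perp_Psi} puts each $S_q(V_q^i)$, and hence each semisimple subquotient $\rad_Q^i X/\rad_Q^{i+1} X$, in ${}^{\perp}\Psi(\cB)$. A finite induction down the filtration, using closure of ${}^{\perp}\Psi(\cB)$ under extensions, then yields $X \in {}^{\perp}\Psi(\cB)$ as required.

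The main obstacle will be promoting the ${}_Q\Mod$-decomposition of the projective object $X$ to a decomposition in $\cX$. This rests on the functoriality of projective covers in the semiperfect category ${}_Q\Mod$ (provided by (Fin)+(Rad)), combined with the $R$-linearity of the transition maps $X_\pi$: the latter forces $R$ to preserve $\rad_Q X$, so the projective cover of $X/\rad_Q X \cong \bigoplus_{q'} S_{q'}(C_{q'}(X))$ acquires a canonical $R$-action compatible with the decomposition. Verifying this compatibility carefully, presumably by appeal to results from Appendix \ref{app:functor}, is the one place where the argument is not entirely formal.
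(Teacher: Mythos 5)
Your overall skeleton -- filter $X$ by powers of the radical, identify the layers as coproducts of objects $S_q(V)$ with $V \in \cA$, then apply Lemma \ref{lem:self-injective_SqA_in_perp_Psi} together with closure of ${}^{\perp}\Psi( \cB )$ under coproducts and extensions -- is exactly the paper's. The gap is in your justification of the central step $V_q^i \in \cA$: the claimed decomposition $X \cong \bigoplus_{q'} Q( q',- ) \underset{ k }{ \otimes } C_{q'}( X )$ \emph{in $\cX$} is false in general. Projectivity of $X$ as an object of ${}_{ Q }\!\Mod$ gives such a decomposition only $k$-linearly, and no $R$-action can in general be imposed compatibly with it. Concretely, let $Q$ be the quiver with relations \eqref{equ:Gillespie_quiver}, let $R = k[x]/(x^2)$, let $( \cA,\cB )$ be the injective cotorsion pair, and let $X$ be the exact complex $\cdots \rightarrow R \xrightarrow{x} R \xrightarrow{x} R \rightarrow \cdots$, so $X \in \Phi( \cA ) = \cE$. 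Your decomposition would exhibit $X$ as a direct sum of disc complexes $Q( q',- ) \underset{ k }{ \otimes } C_{q'}( X )$ with $C_{q'}( X ) \cong k$, i.e.\ as a contractible complex of $R$-modules; but $X$ is not contractible over $R$ (a contraction would split $\Ker \partial \cong k$ off the indecomposable module $R$), and indeed its terms $R$ are not isomorphic to $k \oplus k$ as $R$-modules. So the ``promotion'' you yourself flag as the delicate point genuinely fails. Without it, your identification of the lower layers only yields that $\rad_Q^i X/\rad_Q^{i+1}X$ is a \emph{quotient} of a coproduct of objects $S_qC_p( X )$ (the natural map $( \rad_Q^i/\rad_Q^{i+1} ) \underset{ Q }{ \otimes } X \rightarrow \rad_Q^i X/\rad_Q^{i+1}X$ is merely an epimorphism in general), and $\cA$ need not be closed under quotients, so $V_q^i \in \cA$ is not established.

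The repair is the paper's argument, which avoids decomposing $X$ altogether: use the \emph{flatness}, rather than the projectivity, of $X$ over $Q$ provided by Proposition \ref{pro:self-injective_Ex}. Since $- \underset{ Q }{ \otimes } X$ is exact and lands in $\cX$ (so $R$-linearity is automatic), tensoring the radical filtration \eqref{equ:self-injective_radical_filtration} of the bimodule $Q( -,- )$ with $X$ gives a finite filtration of $X$ in $\cX$ whose layers are $( \rad_Q^i/\rad_Q^{i-1} ) \underset{ Q }{ \otimes } X \cong \coprod_{ p,q } S_qC_p( X )^{ n_i( p,q ) }$ by \eqref{equ:self-injective_radical_quotients} and Paragraph \ref{bfhpg:standard}(iv); these lie in ${}^{\perp}\Psi( \cB )$ by Lemma \ref{lem:self-injective_SqA_in_perp_Psi}, and your closing induction along the filtration then goes through verbatim. (When $X$ is flat this filtration coincides with your radical filtration of $X$, which is why your weaker claim about the layers is true even though your route to it is not.)
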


\begin{proof}
If $X \in \Phi( \cA )$ is given, then $X \in \cE_L = \cE$ by definition of $\Phi( \cA )$, so Proposition \ref{pro:self-injective_Ex} says that $X$ is flat when viewed as an object of ${}_{ Q }\!\Mod$.  This means that $- \underset{ Q }{ \otimes } X$ is exact, so the filtration 
\eqref{equ:self-injective_radical_filtration} induces a filtration in $\cX = {}_{ Q,R }\!\Mod$:
\[
  0 = \rad_Q^N \underset{ Q }{ \otimes } X  \subseteq \cdots \subseteq \rad_Q^1 \underset{ Q }{ \otimes } X \subseteq \rad_Q^0 \underset{ Q }{ \otimes } X = X.
\]
The final equality is by Equation \eqref{equ:self-injective_identity_bimodule}. 
The quotients are
\begin{align*}
  ( \rad_Q^i/\rad_Q^{ i-1 } ) \underset{ Q }{ \otimes } X
  & \stackrel{ \rm (a) }{ \cong }\Bigg( \coprod_{ p,q \in Q } ( \dual\!S \langle p \rangle \underset{ k }{ \otimes } S \langle q \rangle )^{ n_i( p,q ) } \Bigg) \underset{ Q }{ \otimes } X \\
  & \stackrel{ \rm (b) }{ \cong } \coprod_{ p,q \in Q } \big( S \langle q \rangle \underset{ k }{ \otimes } (\dual\!S \langle p \rangle \underset{ Q }{ \otimes } X) \big)^{ n_i( p,q ) } \\
  & = \coprod_{ p,q \in Q } S_qC_p( X )^{ n_i( p,q ) } \\
  & = (*),
\end{align*}
where (a) is by Equation \eqref{equ:self-injective_radical_quotients}, while (b) uses that $- \underset{ Q }{ \otimes } X$ preserves coproducts, followed by Paragraph \ref{bfhpg:standard}(iv).  However, $C_p( X ) \in \cA$ by definition of $\Phi( \cA )$, so Lemma \ref{lem:self-injective_SqA_in_perp_Psi} implies $(*) \in {}^{ \perp }\Psi( \cB )$, whence also $X \in {}^{ \perp }\Psi( \cB )$ as desired.
\end{proof}

\section{Condition (Seq)}
\label{sec:Seq}

Section \ref{sec:Seq} continues to use Setup \ref{set:self-injective}.  The aim is to prove Proposition \ref{pro:25} by which condition (Seq) of Definition \ref{def:ExSeq} holds.  This is used in the proof of Theorem \ref{thm:self-injective_main}.

\begin{Setup}
\label{set:self-injective2}
In addition to Setup \ref{set:self-injective}, Section \ref{sec:Seq} uses the following setup.
\begin{itemize}
\setlength\itemsep{4pt}

  \item  $M_0 \in {}_{ Q }\!\Mod$ is an object of finite length.  By Paragraph \ref{bfhpg:simple}(ii) it has an augmented minimal projective resolution, which we break into short exact sequences as follows.
\[
\vcenter{
  \xymatrix {
    \cdots \ar [rr] && P_2 \ar^{ \partial^P_2 }[rr] \ar@{->>}_{ \pi_2 }[rd] && P_1 \ar^{ \partial^P_1 }[rr] \ar@{->>}_{ \pi_1 }[rd] && P_0 \ar@{->>}^{ \pi_0 }[rr] && M_0 \\
    &&& M_2 \ar@{^(->}_{ \mu_2}[ur] && M_1 \ar@{^(->}_{ \mu_1}[ur] \\
            }
        }
\]
Each $P_i$ and each $M_i$ has finite length, and for each $q \in Q$, the functors $\dual\!S \langle q \rangle \underset{ Q }{ \otimes } -$ and $\Hom_Q( S \langle q \rangle,- )$ vanish on the $\partial^P_i$.

  \item  $B^0 \in {}_{ R }\!\Mod$ is a module with an augmented injective resolution, which we break into short exact sequences as follows.
\[
\vcenter{
  \xymatrix {
    B^0 \ar@{^(->}^{ \beta^0 }[rr] && I^0 \ar^{ \partial_I^0 }[rr] \ar@{->>}_{ \alpha^0 }[dr] && I^1 \ar^{ \partial_I^1 }[rr] \ar@{->>}_{ \alpha^1 }[dr] && I^2 \ar[rr] && \cdots \\
    &&& B^1 \ar@{^(->}_{ \beta^1 }[ur] && B^2 \ar@{^(->}_{ \beta^2 }[ur] \\
            }
        }
\]

\end{itemize}
\end{Setup}

\begin{Construction}
\label{con:self-injective}
This construction consists of two parts labelled (i) and (ii).

(i)  For each $i \geqslant 0$ we define a short exact sequence
\begin{equation}
\label{equ:self-injective_fundamental_sequence}
  0 \xrightarrow{}
  E^i \xrightarrow{ \varepsilon^i }
  T^i \xrightarrow{ \tau^i }
  M_i \underset{ k }{ \otimes } B^i \xrightarrow{}
  0
\end{equation}
in $\cX$ as follows:

If $i = 0$ then \eqref{equ:self-injective_fundamental_sequence} is defined to be
\[
  0 \xrightarrow{}
  0 \xrightarrow{}
  M_0 \underset{ k }{ \otimes } B^0 \xrightarrow{ \id }
  M_0 \underset{ k }{ \otimes } B^0 \xrightarrow{}
  0.
\] 
If \eqref{equ:self-injective_fundamental_sequence} has already been defined for a given value $i \geqslant 0$, then we use it as the last non-trivial column of the following diagram.  The lower right square is a pullback, and the rows and columns are exact, see Paragraph \ref{bfhpg:tensor_and_hom}(iv).
\begin{equation}
\label{equ:pullback}
\vcenter{
  \xymatrix @-0.9pc {
    &&&& 0 \ar[dd] && 0 \ar[dd] \\
    \\  
    &&&& E^i \ar@{=}[rr] \ar_{ \zeta^i }[dd] && E^i \ar^{ \varepsilon^i }[dd] \\
    \\
    0 \ar[rr] && M_{ i+1 } \underset{ k }{ \otimes } B^i \ar^-{ \theta^{ i+1 }}[rr] \ar@{=}[dd] && E^{ i+1 } \ar^{ \kappa^{ i+1 }}[rr] \ar_{ \eta^i }[dd] && T^i \ar[rr] \ar^{ \tau^i }[dd] && 0 \\
    \\
    0 \ar[rr] && M_{ i+1 } \underset{ k }{ \otimes } B^i \ar_-{ \mu_{ i+1 } \otimes B^i }[rr] && P_i \underset{ k }{ \otimes } B^i \ar_-{ \pi_i \otimes B^i }[rr] \ar[dd] && M_i \underset{ k }{ \otimes } B^i \ar[rr] \ar[dd] && 0 \\
    \\
    &&&& 0 && 0 \\
            }
        }
\end{equation}
The row which contains $E^{ i+1 }$ is used as the first non-trivial row of the following diagram.  The upper left square is a pushout, and the rows and columns are exact.
\begin{equation}
\label{equ:pushout}
\vcenter{
  \xymatrix @-0.98pc {
    && 0 \ar[dd] && 0 \ar[dd] \\
    \\
    0 \ar[rr] && M_{ i+1 } \underset{ k }{ \otimes } B^i \ar^-{ \theta^{ i+1 }}[rr] \ar_{ M_{ i+1 } \otimes \beta^i }[dd] && E^{ i+1 } \ar^{ \kappa^{ i+1 }}[rr] \ar^{ \varepsilon^{ i+1 }}[dd] && T^i \ar[rr] \ar@{=}[dd] && 0 \\
    \\
    0 \ar[rr] && M_{ i+1 } \underset{ k }{ \otimes } I^i \ar_-{ \gamma^{ i+1 }}[rr] \ar_{ M_{ i+1 } \otimes \alpha^i }[dd] && T^{ i+1 } \ar_{ \delta^{ i+1 }}[rr] \ar^{ \tau^{ i+1 }}[dd] && T^i \ar[rr] && 0 \\
    \\
    && M_{ i+1 } \underset{ k }{ \otimes } B^{ i+1 } \ar@{=}[rr] \ar[dd] && M_{ i+1 } \underset{ k }{ \otimes } B^{ i+1 }  \ar[dd] \\
    \\
    && 0 && 0 \\
            }
        }
\end{equation}
The column which contains $E^{ i+1 }$ defines \eqref{equ:self-injective_fundamental_sequence} for $i+1$.  Note that Diagrams \eqref{equ:pullback} and \eqref{equ:pushout} define a number of morphisms in addition to those in \eqref{equ:self-injective_fundamental_sequence}.  The first steps of the construction give
\begin{equation}
\label{equ:self-injective_small_ET}
  E^0 = 0
  \;\;,\;\;
  E^1 \cong P_0 \underset{ k }{ \otimes } B^0
  \;\;,\;\;
  T^0 \cong M_0 \underset{ k }{ \otimes } B^0.
\end{equation}

(ii)  Part (i) permits us to construct a short exact sequence of inverse systems as follows:  Set
\[
  \Delta^i
  =
  \left\{
    \begin{array}{cl}
      \id_{ T^0 } & \mbox{for $i=0$,} \\[1mm]
      \delta^1 \circ \cdots \circ \delta^i & \mbox{for $i \geqslant 1$}
    \end{array}
  \right.
\]  
and
\[
  W^i = \Ker \Delta^i.
\]
Each $\Delta^i$ is an epimorphism because each $\delta^i$ is an epimorphism by Diagram \eqref{equ:pushout}.  Hence there is a short exact sequence of inverse systems, where it is easy to check that the induced morphisms $\omega^i$ are also epimorphisms:
\begin{equation}
\label{equ:25_11}
\vcenter{
  \xymatrix @-0.98pc {
    \cdots \ar[rr] && W^2 \ar@{->>}^{ \omega^2 }[rr] \ar@{^(->}[dd] && W^1 \ar@{->>}^{ \omega^1 }[rr] \ar@{^(->}[dd] && W^0 \ar@{^(->}[dd] \\
    \\
    \cdots \ar[rr] && T^2 \ar@{->>}^{ \delta^2 }[rr] \ar@{->>}_{ \Delta^2 }[dd] && T^1 \ar@{->>}^{ \delta^1 }[rr] \ar@{->>}^{ \Delta^1 }[dd] && T^0 \ar@{->>}^{ \Delta^0 }[dd] \\
    \\
    \cdots \ar[rr] && T^0 \ar@{=}[rr] && T^0 \ar@{=}[rr] && T^0 \lefteqn{.} \\
            }
        }
\end{equation}
The inverse limits of the two first systems will be denoted
\begin{equation}
\label{equ:WT}
  W = \invlim\, W^i
  \;\;,\;\;
  T = \invlim\, T^i.
\end{equation}
The inverse limit of the third system is
\begin{equation}
\label{equ:T0}
  \invlim\, T^0
  \cong
  \invlim\, M_0 \underset{ k }{ \otimes } B^0
  \cong 
  M_0 \underset{ k }{ \otimes } B^0
\end{equation}
by Equation \eqref{equ:self-injective_small_ET}.  
\end{Construction}

\begin{Remark}
\label{rmk:25}
If $q \in Q$ and $B \in \cB$ are given, then we can set $M_0 = S \langle q \rangle$ and $B^0 = B$ in Setup \ref{set:self-injective2}.  We will prove that if $( \cA,\cB )$ is hereditary, then the inverse limit of \eqref{equ:25_11} is a short exact sequence
\begin{equation}
\label{equ:lim_sequence}
  0
  \xrightarrow{}
  W
  \xrightarrow{}
  T
  \xrightarrow{}
  S_q( B )
  \xrightarrow{}
  0,
\end{equation}
which can be used as the sequence in condition (Seq)(ii) of Definition \ref{def:ExSeq}.  This will be accomplished in Proposition \ref{pro:25}.  

As an example, suppose that $Q$ is the quiver with relations \eqref{equ:Gillespie_quiver}, viewed as a $k$-preadditive category.  Then $\cX$ is the category of chain complexes over ${}_{ R }\!\Mod$.  If $q = 1$ and we write $I^j = I_{ -j }$, then 
\[
  T^i = \cdots \xrightarrow{} 0 \xrightarrow{} B \xrightarrow{} I_0 \xrightarrow{} I_{ -1 } \xrightarrow{} \cdots \xrightarrow{} I_{ -i+1 } \xrightarrow{} 0 \xrightarrow{} \cdots
\]
with $B$ in degree $1$, and
\[
  W^i = \cdots \xrightarrow{} 0 \xrightarrow{} I_0 \xrightarrow{} I_{ -1 } \xrightarrow{} \cdots \xrightarrow{} I_{ -i+1 } \xrightarrow{} 0 \xrightarrow{} \cdots.
\]
The inverse limits become the augmented injective resolution
\[
  T = \cdots \xrightarrow{} 0 \xrightarrow{} B \xrightarrow{} I_0 \xrightarrow{} I_{ -1 } \xrightarrow{} I_{ -2 } \xrightarrow{} \cdots
\]
with $B$ in degree $1$, and the injective resolution
\[
  W = \cdots \xrightarrow{} 0 \xrightarrow{} I_0 \xrightarrow{} I_{ -1 } \xrightarrow{} I_{ -2 } \xrightarrow{} \cdots.
\]
With these $T$ and $W$, the short exact sequence \eqref{equ:lim_sequence} is dual to a sequence with projective objects, which was used indirectly by Gillespie in his proof of compatibility, see the proof of \cite[thm.\ 3.12]{G2}.
\end{Remark}

\begin{Lemma}
\label{lem:8}
If $i \geqslant 0$ then $E^i \in \cE$.  
\end{Lemma}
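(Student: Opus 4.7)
The plan is to proceed by induction on $i$. The base case $i=0$ is immediate: $E^0 = 0$ by Equation \eqref{equ:self-injective_small_ET}, and the zero object belongs to $\cE$.

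For the inductive step I would extract from the pullback diagram \eqref{equ:pullback} the short exact sequence sitting in its middle column,
\[
  0 \to E^i \xrightarrow{\zeta^i} E^{i+1} \xrightarrow{\eta^i} P_i \underset{k}{\otimes} B^i \to 0,
\]
and then apply Proposition \ref{pro:self-injective_Ex}, which identifies $\cE$ with the class of objects of $\cX$ that are projective (equivalently flat, equivalently injective) when viewed in ${}_{ Q }\!\Mod$. The whole argument is thus reduced to analysing this short exact sequence after forgetting the $R$-structure.

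The main subgoal is to show $P_i \underset{k}{\otimes} B^i \in \cE$. Here I would argue that, after forgetting the $R$-structure, $B^i$ is merely a $k$-vector space, so in ${}_{ Q }\!\Mod$ the object $P_i \underset{k}{\otimes} B^i$ is a coproduct of copies of $P_i$, one for each element of a $k$-basis of $B^i$. By the choice of resolution in Setup \ref{set:self-injective2}, $P_i$ is projective in ${}_{ Q }\!\Mod$, and since projectives in a Grothendieck category are closed under coproducts, the coproduct is projective too, so Proposition \ref{pro:self-injective_Ex} places $P_i \underset{k}{\otimes} B^i$ in $\cE$.

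With this in hand the inductive step closes easily: viewed in ${}_{ Q }\!\Mod$ the displayed sequence is a short exact sequence of projective modules with projective right-hand term, so it splits there; consequently $E^{i+1}$ is, in ${}_{ Q }\!\Mod$, the direct sum of two projectives and hence itself projective, giving $E^{i+1} \in \cE$ by a final application of Proposition \ref{pro:self-injective_Ex}. I do not anticipate a serious obstacle: the argument rests entirely on the classical facts that projectives are closed under coproducts and extensions, combined with the identification of $\cE$ provided by the self-injectivity of $Q$. The one point that genuinely uses the setup (rather than being purely categorical) is recognising that $P_i \underset{k}{\otimes} B^i$ is projective in ${}_{ Q }\!\Mod$ without any hypothesis on $B^i$ as an $R$-module.
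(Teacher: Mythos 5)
Your proof is correct and takes essentially the same route as the paper: it reduces along the very same short exact sequence $0 \to E^i \to E^{i+1} \to P_i \underset{k}{\otimes} B^i \to 0$ extracted from Diagram \eqref{equ:pullback}, with the key point being $P_i \underset{k}{\otimes} B^i \in \cE$. The only (harmless) difference is in how the two sub-steps are justified: you channel everything through Proposition \ref{pro:self-injective_Ex}, viewing $P_i \underset{k}{\otimes} B^i$ as a coproduct of copies of $P_i$ in ${}_{ Q }\!\Mod$ and splitting the sequence there, whereas the paper verifies $P_i \underset{k}{\otimes} B^i \in \cE_L$ directly from Lemma \ref{lem:self-injective_standard_isomorphisms}(iii) (a $\Tor$ computation) and closes the induction by the extension-closedness of $\cE$, which follows from its definition alone.
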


\begin{proof}
It follows from Definition \ref{def:Phi_Psi} that $\cE$ contains $E^0 = 0$ and is closed under extensions.  Diagram \eqref{equ:pullback} contains the short exact sequence $0 \rightarrow E^i \rightarrow E^{ i+1 } \rightarrow P_i \underset{ k }{ \otimes } B^i \rightarrow 0$, so it is enough to show $P_i \underset{ k }{ \otimes } B^i \in \cE$ for each $i \geqslant 0$.  However, for $q \in Q$ we have $L_1C_q( P_i \underset{ k }{ \otimes } B^i ) \cong \Tor^Q_1( \dual\!S \langle q \rangle,P_i ) \underset{ k }{ \otimes } B^i = (*)$ by Lemma \ref{lem:self-injective_standard_isomorphisms}(iii).  Since $P_i$ is projective, this is
$(*) = 0 \underset{ k }{ \otimes } B^i = 0$.  This shows $P_i \underset{ k }{ \otimes } B^i \in \cE_L = \cE$.  
\end{proof}

\begin{Lemma}
\label{lem:self-injective_useful_exact_sequence}
If $i \geqslant 0$ and $q \in Q$ then there is a short exact sequence
\[
  0
  \xrightarrow{}
  K_q( M_{ i+1 } \underset{ k }{ \otimes } B^i ) 
  \xrightarrow{ K_q( M_{ i+1 } \otimes \beta^i ) }
  K_q( M_{ i+1 } \underset{ k }{ \otimes } I^i ) 
  \xrightarrow{ K_q( M_{ i+1 } \otimes \alpha^i ) }
  K_q( M_{ i+1 } \underset{ k }{ \otimes } B^{ i+1 } )
  \xrightarrow{}
  0.
\]
\end{Lemma}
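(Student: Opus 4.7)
The plan is to reduce the statement to the exactness of tensoring a short exact sequence of $R$-modules with a $k$-vector space, using the computational formula from Lemma \ref{lem:self-injective_standard_isomorphisms}(iv).

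First I would recall that, by construction in Setup \ref{set:self-injective2}, the sequence
\[
  0 \xrightarrow{} B^i \xrightarrow{ \beta^i } I^i \xrightarrow{ \alpha^i } B^{i+1} \xrightarrow{} 0
\]
is short exact in ${}_{R}\!\Mod$, being one of the pieces into which the augmented injective resolution of $B^0$ is broken. Since $k$ is a field, every $k$-vector space is flat, so for any $V \in {}_{k}\!\Mod$ the sequence
\[
  0 \xrightarrow{} V \underset{ k }{ \otimes } B^i \xrightarrow{ V \otimes \beta^i } V \underset{ k }{ \otimes } I^i \xrightarrow{ V \otimes \alpha^i } V \underset{ k }{ \otimes } B^{ i+1 } \xrightarrow{} 0
\]
is again short exact in ${}_{R}\!\Mod$.

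Next I would apply Lemma \ref{lem:self-injective_standard_isomorphisms}(iv) with the index equal to $0$, so that it reads
\[
  K_q( M \underset{ k }{ \otimes } B ) \;\cong\; \Hom_Q( S \langle q \rangle, M ) \underset{ k }{ \otimes } B,
\]
naturally in $M \in {}_{ Q }\!\Mod$ and $B \in {}_{ R }\!\Mod$. Taking $M = M_{ i+1 }$ and $B$ equal in turn to $B^i$, $I^i$, $B^{ i+1 }$, and using naturality in the $B$-variable applied to $\beta^i$ and $\alpha^i$, the sequence in the lemma is identified with the sequence obtained by tensoring $0 \to B^i \to I^i \to B^{ i+1 } \to 0$ over $k$ with the single $k$-vector space $V = \Hom_Q( S \langle q \rangle, M_{ i+1 } )$. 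That sequence is short exact by the first paragraph, which is exactly the claim.

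There is no real obstacle here; the only thing to watch is that one does not try to prove the statement by direct computation of $R^1 K_q( M_{ i+1 } \underset{ k }{ \otimes } B^i )$, which is in general nonzero (since $M_{ i+1 }$ is a nontrivial syzygy and the differentials in the minimal projective resolution vanish after applying $\Hom_Q( S \langle q \rangle, - )$). The point is that the exactness one needs is supplied entirely by flatness over the field $k$, once the tensor-product formula from Lemma \ref{lem:self-injective_standard_isomorphisms}(iv) has been invoked to pull the $B$-factor out of $K_q$.
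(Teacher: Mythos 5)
Your proof is correct and is essentially the paper's own argument: the paper likewise applies the exact functor $\Hom_Q( S \langle q \rangle, M_{i+1} ) \underset{k}{\otimes} -$ to the short exact sequence $0 \to B^i \to I^i \to B^{i+1} \to 0$ and identifies the result with the stated sequence via Lemma \ref{lem:self-injective_standard_isomorphisms}(iv). Your explicit appeal to naturality in the $B$-variable just spells out a step the paper leaves implicit.
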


\begin{proof}
By Paragraph \ref{bfhpg:tensor_and_hom}(iv) the functor $\Hom_Q( S \langle q \rangle,M_{ i+1 } ) \underset{ k }{ \otimes } -$ is exact.  Applying it to the short exact sequence $0 \xrightarrow{} B^i \xrightarrow{ \beta^i } I^i \xrightarrow{ \alpha^i }
B^{ i+1 } \xrightarrow{} 0$ gives the sequence in the lemma by Lemma \ref{lem:self-injective_standard_isomorphisms}(iv). 
\end{proof}

\begin{Lemma}
\label{lem:14}
If $i \geqslant 0$ and $q \in Q$ then:
\begin{enumerate}
\setlength\itemsep{4pt}

  \item  There is a short exact sequence
\[
  0 \xrightarrow{}
  K_q( E^i ) \xrightarrow{ K_q( \varepsilon^i ) }
  K_q( T^i ) \xrightarrow{ K_q( \tau^i ) }
  K_q( M_i \underset{ k }{ \otimes } B^i ) \xrightarrow{}
  0.
\]

  \item  There is an isomorphism
\[
  R^1K_q( T^i )
  \xrightarrow{ R^1K_q( \tau^i ) }
  R^1K_q( M_i \underset{ k }{ \otimes } B^i ).
\]

\end{enumerate}
\end{Lemma}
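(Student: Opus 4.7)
The plan is to apply the functor $K_q = \Hom_Q(S\langle q \rangle,-)$ to the short exact sequence \eqref{equ:self-injective_fundamental_sequence} and to read off both parts of the lemma from the resulting long exact sequence of derived functors on $\cX$.

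The crucial input will be the vanishing $R^jK_q(E^i)=0$ for every $j \geqslant 1$. To obtain this, I will combine three ingredients already at hand: Lemma \ref{lem:8}, which places $E^i$ in $\cE$; Proposition \ref{pro:self-injective_Ex}, which characterises $\cE$ as the class of $X \in \cX$ that are injective when viewed as objects of ${}_{ Q }\!\Mod$; and Lemma \ref{lem:self-injective_standard_isomorphisms}(ii), which identifies $R^jK_q(-)$ with $\Ext_Q^j(S\langle q \rangle,-)$. Together, these immediately yield $R^jK_q(E^i) = \Ext_Q^j(S\langle q \rangle,E^i) = 0$ for all $j \geqslant 1$, since $E^i$ is injective when viewed in ${}_{ Q }\!\Mod$.

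With this vanishing established, the long exact sequence
\[
\cdots \to R^jK_q(E^i) \to R^jK_q(T^i) \xrightarrow{R^jK_q(\tau^i)} R^jK_q\big(M_i \underset{k}{\otimes} B^i\big) \to R^{j+1}K_q(E^i) \to \cdots
\]
does all the work. At $j=0$, the vanishing $R^1K_q(E^i)=0$ forces $K_q(\tau^i)$ to be surjective, which combined with the left-exactness of $K_q$ applied to \eqref{equ:self-injective_fundamental_sequence} gives the short exact sequence of part (i). At $j=1$, the two vanishings $R^1K_q(E^i) = 0$ and $R^2K_q(E^i) = 0$ make $R^1K_q(\tau^i)$ both injective (the term to its left is zero) and surjective (the term to its right is zero), hence an isomorphism, proving part (ii).

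I do not anticipate any substantial obstacle; the argument reduces to assembling ingredients already in place. The only point requiring a moment's care is to confirm that Lemma \ref{lem:self-injective_standard_isomorphisms}(ii) genuinely identifies the derived functor $R^jK_q$ on $\cX$ with $\Ext_Q^j(S\langle q \rangle,-)$ computed after forgetting the $R$-structure, which is precisely what the cited lemma asserts.
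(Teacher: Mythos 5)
Your proposal is correct and matches the paper's proof: the paper applies the left exact functor $K_q$ to the sequence \eqref{equ:self-injective_fundamental_sequence} and reads off both parts of the long exact sequence using $R^{\geqslant 1}K_q(E^i)=0$, which it justifies exactly as you do, by Lemma \ref{lem:8} and Proposition \ref{pro:self-injective_Ex} (with Lemma \ref{lem:self-injective_standard_isomorphisms}(ii) supplying the identification $R^jK_q=\Ext_Q^j(S\langle q\rangle,-)$ that you spell out explicitly). No gaps.
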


\begin{proof}
The functor $K_q( - ) = \Hom_Q( S \langle q \rangle,- )$ is left exact, so applying it to the short exact sequence \eqref{equ:self-injective_fundamental_sequence} gives a long exact sequence
\begin{align*}
  0
  & \xrightarrow{}
  K_q( E^i ) \xrightarrow{ K_q( \varepsilon^i ) }
  K_q( T^i ) \xrightarrow{ K_q( \tau^i ) }
  K_q( M_i \underset{ k }{ \otimes } B^i ) \\[1mm]
  & \xrightarrow{}
  R^1K_q( E^i ) \xrightarrow{ R^1K_q( \varepsilon^i ) }
  R^1K_q( T^i ) \xrightarrow{ R^1K_q( \tau^i ) }
  R^1K_q( M_i \underset{ k }{ \otimes } B^i ) 
  \xrightarrow{} R^2K_q( E^i ) \xrightarrow{} \cdots.
\end{align*}
This implies both parts of the lemma because $R^{ \geqslant 1 }K_q( E^i ) = 0$ by Proposition \ref{pro:self-injective_Ex} and Lemma \ref{lem:8}.
\end{proof}

\begin{Lemma}
\label{lem:11}
If $i \geqslant 1$ and $q \in Q$ then there is an exact sequence
\[
  K_q( E^{ i+1 } )
  \xrightarrow{ K_q( \kappa^{ i+1 } ) }
  K_q( T^i ) 
  \xrightarrow{ K_q( \tau^i ) }
  K_q( M_i \underset{ k }{ \otimes } B^i ).
\]
\end{Lemma}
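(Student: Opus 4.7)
The plan is to extract exactness by combining the left-exactness of $K_q$ applied to the two pertinent short exact sequences in Diagram \eqref{equ:pullback} (the middle row containing $E^{i+1}$ and the right-hand column containing $T^i$), together with the factorisation relations built into the pullback square.

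First I would show the inclusion $\ker K_q( \tau^i ) \subseteq \Image K_q( \kappa^{ i+1 } )$. The universal property of the pullback square in the lower right of Diagram \eqref{equ:pullback} gives $\kappa^{ i+1 } \circ \zeta^i = \varepsilon^i$ (the map $\zeta^i$ arises as the kernel of $\eta^i$, which is induced from the kernel $\varepsilon^i$ of $\tau^i$ via the pullback). Applying $K_q$ yields $K_q( \kappa^{ i+1 } ) \circ K_q( \zeta^i ) = K_q( \varepsilon^i )$, so $\Image K_q( \varepsilon^i ) \subseteq \Image K_q( \kappa^{ i+1 } )$. By Lemma \ref{lem:14}(i), $\Image K_q( \varepsilon^i ) = \ker K_q( \tau^i )$, giving the desired containment.

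Next I would show the reverse inclusion $\Image K_q( \kappa^{ i+1 } ) \subseteq \ker K_q( \tau^i )$, which amounts to $K_q( \tau^i \circ \kappa^{ i+1 } ) = 0$. Commutativity of the pullback gives $\tau^i \circ \kappa^{ i+1 } = ( \pi_i \otimes B^i ) \circ \eta^i$, so it suffices to prove $K_q( \pi_i \otimes_k B^i ) = 0$. Here the hypothesis $i \geqslant 1$ enters: the minimal resolution factors $\partial^P_i = \mu_i \circ \pi_i$ through $M_i$, and the Setup \ref{set:self-injective2} assumption is that $K_q$ vanishes on each $\partial^P_i$. Since $\mu_i$ is a monomorphism and $K_q$ is left exact, $K_q( \mu_i )$ is a monomorphism, so $K_q( \mu_i ) \circ K_q( \pi_i ) = K_q( \partial^P_i ) = 0$ forces $K_q( \pi_i ) = 0$. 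Applying Lemma \ref{lem:self-injective_standard_isomorphisms}(iv) (the naturality version, which identifies $K_q$ applied to a morphism of the form $f \otimes_k B^i$ with $\Hom_Q( S \langle q \rangle,f ) \otimes_k B^i$) then yields $K_q( \pi_i \otimes_k B^i ) = 0$, completing the argument.

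The main (but mild) obstacle is recognising that the Setup hypothesis is formulated for the differentials $\partial^P_i$, whereas what the computation needs is vanishing for the epimorphism $\pi_i$; this is precisely where the hypothesis $i \geqslant 1$ (so that $\mu_i$ is defined and is monic) allows one to propagate the vanishing from $\partial^P_i$ to $\pi_i$. For $i = 0$ there is no $\mu_0$, and indeed $K_q( \pi_0 )$ need not vanish, which explains the exclusion of this case from the statement.
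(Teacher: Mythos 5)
Your proof is correct, and its decisive computation is the same as the paper's: the vanishing $K_q( \pi_i \underset{ k }{ \otimes } B^i ) = 0$, obtained from the Setup assumption that $\Hom_Q( S \langle q \rangle,- )$ kills $\partial^P_i = \mu_i \circ \pi_i$, the left-exactness of $\Hom_Q( S \langle q \rangle,- )$ making $\Hom_Q( S \langle q \rangle,\mu_i )$ injective, and Lemma \ref{lem:self-injective_standard_isomorphisms}(iv); your remark on why $i \geqslant 1$ is needed is also exactly the right one. Where you diverge is in how exactness is then extracted. The paper applies $K_q$ to the whole lower right square of Diagram \eqref{equ:pullback} and uses that a left exact functor preserves pullbacks; since the bottom map becomes zero, the resulting pullback square immediately yields $\Image K_q( \kappa^{ i+1 } ) = \Ker K_q( \tau^i )$. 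You instead prove the two inclusions separately: $\Image K_q( \kappa^{ i+1 } ) \subseteq \Ker K_q( \tau^i )$ from commutativity of the pullback square plus the vanishing, and the reverse inclusion from Lemma \ref{lem:14}(i) combined with the relation $\varepsilon^i = \kappa^{ i+1 } \circ \zeta^i$ read off from the upper right square of \eqref{equ:pullback}. Both routes are valid and of comparable length; the paper's is slightly more self-contained (it does not invoke Lemma \ref{lem:14}), while yours trades the categorical fact that left exact functors preserve pullbacks for diagram commutativity together with an already-established short exact sequence, which creates no circularity since Lemma \ref{lem:14} is proved independently of this lemma. One cosmetic point: $\mu_i$, $\pi_i$, $\partial^P_i$ are morphisms in ${}_{ Q }\!\Mod$ rather than in $\cX$, so strictly one should write $\Hom_Q( S \langle q \rangle,- )$ when applying the functor to them, as the paper does, and only pass to $K_q$ after tensoring with $B^i$ via Lemma \ref{lem:self-injective_standard_isomorphisms}(iv); this is purely notational and does not affect your argument.
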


\begin{proof}
Since $\Hom_Q( S \langle q \rangle,- )$ is left-exact and $\mu_i$ a monomorphism, $\Hom_Q( S \langle q \rangle,\mu_i )$ is injective.  But Setup \ref{set:self-injective2} implies
\[
  0 =
  \Hom_Q( S \langle q \rangle,\partial^P_i ) =
  \Hom_Q( S \langle q \rangle,\mu_i ) \circ \Hom_Q( S \langle q \rangle,\pi_i ),
\]
so we conclude $\Hom_Q( S \langle q \rangle,\pi_i ) = 0$.  By Lemma \ref{lem:self-injective_standard_isomorphisms}(iv) this implies
\begin{equation}
\label{equ:KqpiBi}
  K_q( \pi_i \underset{ k }{ \otimes } B^i ) =
  \Hom_Q( S \langle q \rangle,\pi_i ) \underset{ k }{ \otimes } B^i =  
  0.
\end{equation}
Now observe that the left exact functor $K_q$ preserves the pullback in Diagram \eqref{equ:pullback}, so there is the following pullback square.
\[
\vcenter{
  \xymatrix @-0.4pc {
    K_q( E^{ i+1 } ) \ar^{ K_q( \kappa^{ i+1 } ) }[rr] \ar_{ K_q( \eta^i ) }[dd] && K_q( T^i ) \ar^{ K_q( \tau^i ) }[dd] \\
    \\
    K_q( P_i \underset{ k }{ \otimes } B^i ) \ar_-{ K_q( \pi_i \otimes B^i ) }[rr] && K_q( M_i \underset{ k }{ \otimes } B^i ) \\
            }
        }
\]
Combining with Equation \eqref{equ:KqpiBi} proves the lemma. 
\end{proof}

\begin{Lemma}
\label{lem:12}
If $i \geqslant 1$ and $q \in Q$ then $\Image K_q( \kappa^i ) = \Image K_q( \delta^i )$. 
\end{Lemma}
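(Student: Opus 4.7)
The plan is to exploit Diagram \eqref{equ:pushout}, which (after shifting the index from $i+1$ down to $i$, so that $i \geqslant 1$) presents a morphism of short exact sequences in $\cX$,
\[
  0 \to M_i \underset{k}{\otimes} B^{i-1} \xrightarrow{\theta^i} E^i \xrightarrow{\kappa^i} T^{i-1} \to 0
  \quad \longrightarrow \quad
  0 \to M_i \underset{k}{\otimes} I^{i-1} \xrightarrow{\gamma^i} T^i \xrightarrow{\delta^i} T^{i-1} \to 0,
\]
whose leftmost vertical map is the monomorphism $M_i \otimes \beta^{i-1}$, whose middle vertical map is $\varepsilon^i$, and whose rightmost vertical map is the identity on $T^{i-1}$. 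This is precisely the data needed to compare $K_q(\kappa^i)$ and $K_q(\delta^i)$ via long exact sequences.

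First I would apply the left exact functor $K_q = \Hom_Q(S\langle q\rangle,-)$ and pass to the induced morphism of long exact sequences. By exactness, $\Image K_q(\kappa^i)$ coincides with the kernel of the connecting map $\partial_B \colon K_q(T^{i-1}) \to R^1K_q(M_i \underset{k}{\otimes} B^{i-1})$ of the upper row, and $\Image K_q(\delta^i)$ coincides with the kernel of the analogous connecting map $\partial_I$ of the lower row. Naturality of the connecting homomorphism with respect to morphisms of short exact sequences gives the factorisation $\partial_I = R^1K_q(M_i \otimes \beta^{i-1}) \circ \partial_B$, so it suffices to show that $R^1K_q(M_i \otimes \beta^{i-1})$ is injective, whence $\Ker \partial_B = \Ker \partial_I$.

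For this, I would invoke Lemma \ref{lem:self-injective_standard_isomorphisms}(iv), whose naturality in the second variable identifies $R^1K_q(M_i \otimes \beta^{i-1})$ with $\Ext_Q^1(S\langle q\rangle, M_i) \underset{k}{\otimes} \beta^{i-1}$. Since $\beta^{i-1}$ is a monomorphism and the bifunctor $- \underset{k}{\otimes} -$ is exact over the field $k$, this map is injective, and the lemma follows. I do not foresee any substantial obstacle: the argument is a diagram chase combined with the standard naturality of connecting homomorphisms and the field exactness of $\otimes_k$; the only point requiring care is making sure the indexing of Diagram \eqref{equ:pushout} is consistent with the hypothesis $i \geqslant 1$, which is precisely what ensures the pushout square defining $\delta^i$ is available.
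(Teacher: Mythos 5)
Your proof is correct, but it takes a genuinely different route from the paper's. You apply $K_q$ to the morphism of short exact sequences contained in Diagram \eqref{equ:pushout}, use naturality of the connecting homomorphism (the right-hand vertical map being $\id_{T^{i-1}}$) to get $\partial_I = R^1K_q( M_i \otimes \beta^{i-1} ) \circ \partial_B$, and then reduce the claim to injectivity of $R^1K_q( M_i \otimes \beta^{i-1} )$, which you obtain from the naturality of Lemma \ref{lem:self-injective_standard_isomorphisms}(iv) together with exactness of $\Ext_Q^1( S \langle q \rangle,M_i ) \underset{ k }{ \otimes } -$ over the field $k$ applied to the monomorphism $\beta^{i-1}$; this is the exact mirror of the trick the paper uses for the epimorphism $\alpha^i$ in Lemma \ref{lem:19}. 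The paper instead assembles a $3\times 3$ diagram by applying $K_q$ to Diagram \eqref{equ:pushout}, replacing the last column by the images of $K_q( \kappa^{i+1} )$ and $K_q( \delta^{i+1} )$, and invokes the Nine Lemma, with the two left columns short exact by Lemmas \ref{lem:self-injective_useful_exact_sequence} and \ref{lem:14}(i). A notable difference in dependencies: the paper's route needs Lemma \ref{lem:14}(i), hence $E^i \in \cE$ (Lemma \ref{lem:8} and Proposition \ref{pro:self-injective_Ex}), whereas your connecting-map argument avoids any input about $E^i$ altogether and only uses the standard isomorphism; in exchange, the paper's $3\times 3$ diagram is reused almost verbatim as the template for the later Lemma \ref{lem:22}, so its proof does double duty. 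Your identification of both images as subobjects of $K_q( T^{i-1} )$ via kernels of $\partial_B$ and $\partial_I$ also correctly yields equality (not merely isomorphism), and your index shift is consistent with the range $i \geqslant 1$.
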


\begin{proof}
If $i \geqslant 0$, then $K_q$ can be applied to Diagram \eqref{equ:pushout}.  Replacing the third non-trivial column by the images of the relevant morphisms gives the following commutative diagram. 
\[
\vcenter{
  \xymatrix @-0.98pc {
    && 0 \ar[dd] && 0 \ar[dd] && 0 \ar[dd] \\
    \\
    0 \ar[rr] && K_q( M_{ i+1 } \underset{ k }{ \otimes } B^i ) \ar^-{ K_q( \theta^{ i+1 } ) }[rr] \ar_{ K_q( M_{ i+1 } \otimes \beta^i ) }[dd] && K_q( E^{ i+1 } ) \ar[rr]\ar^{ K_q( \varepsilon^{ i+1 } ) }[dd] && \Image K_q( \kappa^{ i+1 } ) \ar[rr] \ar[dd] && 0 \\
    \\
    0 \ar[rr] && K_q( M_{ i+1 } \underset{ k }{ \otimes } I^i ) \ar_-{ K_q( \gamma^{ i+1 } ) }[rr] \ar_{ K_q( M_{ i+1 } \otimes \alpha^i ) }[dd] && K_q( T^{ i+1 } ) \ar[rr] \ar^{ K_q( \tau^{ i+1 } ) }[dd] && \Image K_q( \delta^{ i+1 } ) \ar[rr] \ar[dd] && 0 \\
    \\
    0 \ar[rr] && K_q( M_{ i+1 } \underset{ k }{ \otimes } B^{ i+1 } ) \ar@{=}[rr] \ar[dd] && K_q( M_{ i+1 } \underset{ k }{ \otimes } B^{ i+1 } ) \ar[rr] \ar[dd] && 0 \ar[rr] \ar[dd] && 0 \\
    \\
    && 0 && 0 && 0 \\
            }
        }
\]
It is enough to show that the third non-trivial column is a short exact sequence.  We use the Nine Lemma, so have to show that the rows and the first two non-trivial columns are short exact.  The row which contains an identity morphism is trivially short exact.  Since $K_q$ is left-exact, the other rows are short exact by construction.  The first non-trivial column is short exact by Lemma \ref{lem:self-injective_useful_exact_sequence} and the second by Lemma \ref{lem:14}(i).
\end{proof}

\begin{Lemma}
\label{lem:15}
If $i \geqslant 1$ and $q \in Q$ then $\Image K_q( \varepsilon^i ) = \Image K_q( \delta^{ i+1 } )$. 
\end{Lemma}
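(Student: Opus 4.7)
The plan is to chain together Lemmas \ref{lem:14}, \ref{lem:11}, and \ref{lem:12}, all of which live inside $K_q(T^i)$ and describe various images and kernels of maps landing there. The key observation is that $\Image K_q(\varepsilon^i)$ and $\Image K_q(\kappa^{i+1})$ can each be identified with $\Ker K_q(\tau^i)$ for different reasons, whence they agree; then Lemma \ref{lem:12} bridges to $\Image K_q(\delta^{i+1})$.

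More concretely, I would argue as follows. By Lemma \ref{lem:14}(i), the sequence
\[
  0 \to K_q(E^i) \xrightarrow{K_q(\varepsilon^i)} K_q(T^i) \xrightarrow{K_q(\tau^i)} K_q(M_i \underset{k}{\otimes} B^i) \to 0
\]
is exact; in particular $\Image K_q(\varepsilon^i) = \Ker K_q(\tau^i)$. Meanwhile, Lemma \ref{lem:11} supplies the three-term exact sequence
\[
  K_q(E^{i+1}) \xrightarrow{K_q(\kappa^{i+1})} K_q(T^i) \xrightarrow{K_q(\tau^i)} K_q(M_i \underset{k}{\otimes} B^i),
\]
giving $\Image K_q(\kappa^{i+1}) = \Ker K_q(\tau^i)$ as well. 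Comparing, $\Image K_q(\varepsilon^i) = \Image K_q(\kappa^{i+1})$.

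Finally I would invoke Lemma \ref{lem:12}, which says $\Image K_q(\kappa^{i+1}) = \Image K_q(\delta^{i+1})$, to conclude $\Image K_q(\varepsilon^i) = \Image K_q(\delta^{i+1})$, as required.

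I do not expect any significant obstacle here, since the proof is a direct assembly of three previously established identifications of subobjects of $K_q(T^i)$; the only thing to verify is that the relevant hypotheses match (the requirement $i\geqslant 1$ is exactly what Lemmas \ref{lem:11} and \ref{lem:12} demand, and Lemma \ref{lem:14}(i) holds for all $i \geqslant 0$, so it certainly applies at our index). No new diagram chase beyond the bookkeeping already done in the previous lemmas is needed.
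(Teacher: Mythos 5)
Your proof is correct and is essentially the paper's own argument: the paper also chains $\Image K_q(\varepsilon^i) = \Ker K_q(\tau^i) = \Image K_q(\kappa^{i+1}) = \Image K_q(\delta^{i+1})$ using Lemmas \ref{lem:14}(i), \ref{lem:11}, and \ref{lem:12} in exactly this way. Your check of the index hypotheses (applying Lemma \ref{lem:12} at index $i+1 \geqslant 2$) is also consistent with the paper.
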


\begin{proof}
Using Lemmas \ref{lem:14}(i), \ref{lem:11}, and \ref{lem:12} gives the equalities
\[
  \Image K_q( \varepsilon^i ) =
  \Ker K_q( \tau^i ) =
  \Image K_q( \kappa^{ i+1 } ) =
  \Image K_q( \delta^{ i+1 } ).
\qedhere
\]
\end{proof}

\begin{Lemma}
\label{lem:21}
If $i \geqslant 1$ and $q \in Q$ then there are short exact sequences:
\begin{enumerate}
\setlength\itemsep{4pt}

  \item  $0 \xrightarrow{} \Image K_q( \delta^{ i+1 } ) \xrightarrow{ \psi' } K_q( T^i ) \xrightarrow{ K_q( \tau^i ) } K_q( M_i \underset{ k }{ \otimes } B^i ) \xrightarrow{} 0$,

  \item  $0 \xrightarrow{} K_q( M_{ i+1 } \underset{ k }{ \otimes } I^i ) \xrightarrow{ K_q( \gamma^{ i+1 } ) } K_q( T^{ i+1 } )\xrightarrow{ \psi'' }
\Image K_q( \delta^{ i+1 } ) \xrightarrow{} 0$,
  
\end{enumerate}
where $\psi'$ is the canonical inclusion and $\psi''$ is induced by $K_q( \delta^{ i+1 } )$.
\end{Lemma}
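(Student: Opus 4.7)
The plan is to derive both sequences directly from Lemma \ref{lem:14}(i), Lemma \ref{lem:15}, and the middle row of Diagram \eqref{equ:pushout}; essentially no new construction is needed, only careful identification of subobjects of $K_q(T^i)$ and factorisation through an image.

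For part (i), I would start from the short exact sequence
\[
  0 \xrightarrow{} K_q(E^i) \xrightarrow{ K_q(\varepsilon^i) } K_q(T^i) \xrightarrow{ K_q(\tau^i) } K_q(M_i \underset{k}{\otimes} B^i) \xrightarrow{} 0
\]
provided by Lemma \ref{lem:14}(i). Since $K_q(\varepsilon^i)$ is a monomorphism, it induces an isomorphism between $K_q(E^i)$ and $\Image K_q(\varepsilon^i)$, viewed as a subobject of $K_q(T^i)$. Lemma \ref{lem:15} tells us that $\Image K_q(\varepsilon^i) = \Image K_q(\delta^{i+1})$ as subobjects of $K_q(T^i)$. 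Substituting the canonical inclusion $\psi'$ of $\Image K_q(\delta^{i+1})$ into $K_q(T^i)$ in place of $K_q(\varepsilon^i)$ yields the claimed short exact sequence.

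For part (ii), I would apply the left exact functor $K_q$ to the short exact sequence given by the middle row of Diagram \eqref{equ:pushout},
\[
  0 \xrightarrow{} M_{i+1} \underset{k}{\otimes} I^i \xrightarrow{\gamma^{i+1}} T^{i+1} \xrightarrow{\delta^{i+1}} T^i \xrightarrow{} 0,
\]
producing the left exact sequence
\[
  0 \xrightarrow{} K_q(M_{i+1} \underset{k}{\otimes} I^i) \xrightarrow{K_q(\gamma^{i+1})} K_q(T^{i+1}) \xrightarrow{K_q(\delta^{i+1})} K_q(T^i).
\]
Factorising $K_q(\delta^{i+1})$ through its image, and calling the induced surjection $\psi''$, gives the desired sequence.

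I do not anticipate any real obstacle: everything reduces to the standard first isomorphism theorem together with the identification of two subobjects of $K_q(T^i)$ that is the content of Lemma \ref{lem:15}. The only subtlety worth checking is that the constituent maps appearing in parts (i) and (ii) really are, respectively, the canonical inclusion and the image-factorisation of the appropriate morphisms coming out of the construction, which is immediate from how $\varepsilon^i$, $\delta^{i+1}$ and $\gamma^{i+1}$ were defined in Construction \ref{con:self-injective}.
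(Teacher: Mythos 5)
Your proposal is correct and follows essentially the same route as the paper: part (i) combines the exactness coming from the fundamental sequence (which the paper rederives directly, while you cite Lemma \ref{lem:14}(i)) with the identification of subobjects in Lemma \ref{lem:15}, and part (ii) applies the left exact functor $K_q$ to the middle row of Diagram \eqref{equ:pushout} and factors $K_q(\delta^{i+1})$ through its image.
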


\begin{proof}
Applying the left exact functor $K_q$ to the short exact sequence \eqref{equ:self-injective_fundamental_sequence} gives a long exact sequence containing
\[
  K_q( E^i )
  \xrightarrow{ K_q( \varepsilon^i ) }
  K_q( T^i )
  \xrightarrow{ K_q( \tau^i ) }
  K_q( M_i \underset{ k }{ \otimes } B^i )
  \xrightarrow{}
  R^1 K_q( E^i ).
\]
The last term is zero by Proposition \ref{pro:self-injective_Ex} and Lemma \ref{lem:8}, and $\Image K_q( \varepsilon^i ) = \Image K_q( \delta^{ i+1 } )$ by Lemma \ref{lem:15}, so we get the sequence (i).

Diagram \eqref{equ:pushout} contains the short exact sequence $0 \xrightarrow{} M_{ i+1 } \underset{ k }{ \otimes } I^i \xrightarrow{ \gamma^{ i+1 } } T^{ i+1 } \xrightarrow{ \delta^{ i+1 } } T^i \xrightarrow{} 0$.  Applying the left exact functor $K_q$ gives the sequence (ii).
\end{proof}

\begin{Definition}
\label{def:self-injective_varphi}
If $i \geqslant 0$ and $q \in Q$, then $\varphi^{ i+2 }$ is the unique morphism which makes the following square commutative, where the vertical morphisms are the canonical inclusions.
\[
\vcenter{
  \xymatrix @-0.98pc {
    \Image K_q( \delta^{ i+2 } ) \ar^{ \varphi^{ i+2 } }[rr] \ar@{^(->}[dd] && \Image K_q( \delta^{ i+1 } ) \ar@{^(->}[dd] \\
    \\
    K_q( T^{ i+1 } ) \ar_-{ K_q( \delta^{ i+1 } ) }[rr] && K_q( T^i ) \\
                     }
        }
\]
\end{Definition}

\begin{Lemma}
\label{lem:22}
If $i \geqslant 1$ and $q \in Q$ then there is a short exact sequence
\[
  0
  \xrightarrow{}
  K_q( M_{ i+1 } ) \underset{ k }{ \otimes } B^i
  \xrightarrow{}
  \Image K_q( \delta^{ i+2 } )
  \xrightarrow{ \varphi^{ i+2 } }
  \Image K_q( \delta^{ i+1 } )
  \xrightarrow{}
  0.
\]
\end{Lemma}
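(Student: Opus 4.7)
The plan is to factor $\varphi^{i+2}$ through $K_q(E^{i+1})$ by exploiting the identity $\delta^{i+1} \circ \varepsilon^{i+1} = \kappa^{i+1}$, which is the commutativity of the upper-right square of Diagram \eqref{equ:pushout} taken at index $i$ (with the identity on $T^i$ in the rightmost column). Applying the left-exact functor $K_q$ to this identity yields $K_q(\delta^{i+1}) \circ K_q(\varepsilon^{i+1}) = K_q(\kappa^{i+1})$.

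Since $\varepsilon^{i+1}$ is a monomorphism, $K_q(\varepsilon^{i+1})$ is injective and identifies $K_q(E^{i+1})$ with the subobject $\Image K_q(\varepsilon^{i+1}) \subseteq K_q(T^{i+1})$. By Lemma \ref{lem:15} applied at index $i+1$, this subobject equals $\Image K_q(\delta^{i+2})$. Under this identification, $\varphi^{i+2} : \Image K_q(\delta^{i+2}) \to \Image K_q(\delta^{i+1})$ corresponds to the map $K_q(\kappa^{i+1}) : K_q(E^{i+1}) \to K_q(T^i)$ with codomain restricted to its image. By Lemma \ref{lem:12} applied at index $i+1$, we have $\Image K_q(\kappa^{i+1}) = \Image K_q(\delta^{i+1})$, so $\varphi^{i+2}$ is surjective.

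For the kernel, apply the left-exact functor $K_q$ to the top non-trivial row of Diagram \eqref{equ:pullback} at index $i$, namely the short exact sequence
\[
  0 \to M_{i+1} \underset{k}{\otimes} B^i \xrightarrow{\theta^{i+1}} E^{i+1} \xrightarrow{\kappa^{i+1}} T^i \to 0,
\]
obtaining an injection $K_q(\theta^{i+1}) : K_q(M_{i+1} \otimes_k B^i) \hookrightarrow K_q(E^{i+1})$ whose image is exactly $\Ker K_q(\kappa^{i+1})$. Thus $\Ker \varphi^{i+2} \cong K_q(M_{i+1} \otimes_k B^i)$. Finally, Lemma \ref{lem:self-injective_standard_isomorphisms}(iv) at the degree zero case gives the canonical identification $K_q(M_{i+1} \otimes_k B^i) \cong K_q(M_{i+1}) \otimes_k B^i$, which produces the desired short exact sequence.

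The main (and only) obstacle is the bookkeeping required to transport the kernel/image data across the identifications $\Image K_q(\delta^{i+2}) = \Image K_q(\varepsilon^{i+1}) \cong K_q(E^{i+1})$; once the factorization $\varphi^{i+2} \leftrightarrow K_q(\kappa^{i+1})$ is established via the key equation $\delta^{i+1} \circ \varepsilon^{i+1} = \kappa^{i+1}$, the result follows by stringing together Lemmas \ref{lem:12}, \ref{lem:15}, and \ref{lem:self-injective_standard_isomorphisms}(iv).
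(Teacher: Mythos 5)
Your proof is correct, and it takes a genuinely different route from the paper's. The paper proves this lemma by assembling a $3\times 3$ commutative diagram whose columns are the short exact sequences of Lemma \ref{lem:self-injective_useful_exact_sequence} and Lemma \ref{lem:21}(i) (at index $i+1$), whose middle row is Lemma \ref{lem:21}(ii), and then invoking the Nine Lemma to conclude that the top row is exact; Lemma \ref{lem:self-injective_standard_isomorphisms}(iv) converts $K_q( M_{i+1} \underset{k}{\otimes} B^i )$ into $K_q( M_{i+1} ) \underset{k}{\otimes} B^i$ exactly as you do. You instead exploit the identity $\delta^{i+1} \circ \varepsilon^{i+1} = \kappa^{i+1}$ from Diagram \eqref{equ:pushout}, identify $\Image K_q( \delta^{i+2} )$ with $K_q( E^{i+1} )$ via $K_q( \varepsilon^{i+1} )$ and Lemma \ref{lem:15} at index $i+1$, so that $\varphi^{i+2}$ becomes $K_q( \kappa^{i+1} )$ corestricted to its image; surjectivity is then Lemma \ref{lem:12} at index $i+1$, and the kernel is computed by applying the left exact $K_q$ to the row $0 \rightarrow M_{i+1} \underset{k}{\otimes} B^i \rightarrow E^{i+1} \rightarrow T^i \rightarrow 0$. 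All index constraints check out ($i+1 \geqslant 1$ in Lemmas \ref{lem:12} and \ref{lem:15}), and there is no circularity since those lemmas precede this one. What your route buys is economy: no new $3\times 3$ diagram, no Nine Lemma, and no use of Lemma \ref{lem:21}; the diagram-chasing content is absorbed into the already-proved Lemmas \ref{lem:12} and \ref{lem:15}. What the paper's route buys is the explicit ladder of compatible short exact sequences (including the specific monomorphism $\psi'''$), which fits the style of the surrounding arguments; since the lemma only asserts existence of a short exact sequence with $\varphi^{i+2}$ as the epimorphism, your weaker bookkeeping suffices.
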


\begin{proof}
In view of Lemma \ref{lem:self-injective_standard_isomorphisms}(iv), it is enough to show that there is a commutative diagram as follows, in which the first non-trivial row is a short exact sequence.
\[
\vcenter{
  \xymatrix @-0.98pc {
    && 0 \ar[dd] && 0 \ar[dd] && 0 \ar[dd] \\
    \\
    0 \ar[rr] && K_q( M_{ i+1 } \underset{ k }{ \otimes } B^i ) \ar^-{ \psi''' }[rr] \ar_{ K_q( M_{ i+1 } \otimes \beta^i ) }[dd] && \Image K_q( \delta^{ i+2 } ) \ar^{ \varphi^{ i+2 } }[rr] \ar^{ \psi' }[dd] && \Image K_q( \delta^{ i+1 } ) \ar[rr] \ar@{=}[dd] && 0 \\
    \\
    0 \ar[rr] && K_q( M_{ i+1 } \underset{ k }{ \otimes } I^i ) \ar^-{ K_q( \gamma^{ i+1 } ) }[rr] \ar_{ K_q( M_{ i+1 } \otimes \alpha^i ) }[dd] && K_q( T^{ i+1 } ) \ar^{ \psi'' }[rr] \ar^{ K_q( \tau^{ i+1 } ) }[dd] && \Image K_q( \delta^{ i+1 } ) \ar[rr] \ar[dd] && 0 \\
    \\
    0 \ar[rr] && K_q( M_{ i+1 } \underset{ k }{ \otimes } B^{ i+1 } ) \ar@{=}[rr] \ar[dd] && K_q( M_{ i+1 } \underset{ k }{ \otimes } B^{ i+1 } ) \ar[rr] \ar[dd] && 0 \ar[rr] \ar[dd] && 0 \\
    \\
    && 0 && 0 && 0 \\
            }
        }
\]
To construct the diagram, observe that it has three non-trivial columns, each of which is short exact.  The first comes from Lemma \ref{lem:self-injective_useful_exact_sequence}, the second from Lemma \ref{lem:21}(i), and the third is trivial.  As for the morphisms between the columns, $K_q( \gamma^{ i+1 } )$ is obtained from Diagram \eqref{equ:pushout}, which also shows that the lower left square is commutative.  There is a unique induced morphism $\psi'''$ making the upper left square commutative.  The morphism $\psi''$ is induced by $K_q( \delta^{ i+1 } )$, and the upper right square is commutative by Definition \ref{def:self-injective_varphi}.  The lower right square is trivially commutative.

We use the Nine Lemma, so it remains to show that the last two non-trivial rows are short exact.  The row which contains an identity morphism is trivially short exact, and the row above it is short exact by Lemma \ref{lem:21}(ii).
\end{proof}

\begin{Lemma}
\label{lem:19}
If $i \geqslant 1$ and $q \in Q$ then $R^1 K_q( \delta^i ) = 0$.  
\end{Lemma}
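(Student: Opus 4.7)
My plan is to invoke the short exact sequence
\[
0 \xrightarrow{} M_i \underset{k}{\otimes} I^{i-1} \xrightarrow{\gamma^i} T^i \xrightarrow{\delta^i} T^{i-1} \xrightarrow{} 0
\]
provided by the second non-trivial row of Diagram~\eqref{equ:pushout} (with the running index shifted down by one, which is legitimate for $i \geqslant 1$). Applying the left exact functor $K_q = \Hom_Q(S\langle q \rangle, -)$ yields a long exact sequence containing
\[
R^1 K_q\bigl(M_i \underset{k}{\otimes} I^{i-1}\bigr) \xrightarrow{R^1 K_q(\gamma^i)} R^1 K_q(T^i) \xrightarrow{R^1 K_q(\delta^i)} R^1 K_q(T^{i-1}),
\]
so by exactness it suffices to show that $R^1 K_q(\gamma^i)$ is an epimorphism.

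For this I will use the commutative square at the lower left of Diagram~\eqref{equ:pushout}, which asserts the identity $\tau^i \circ \gamma^i = M_i \underset{k}{\otimes} \alpha^{i-1}$. Applying $R^1 K_q$ to this identity gives
\[
R^1 K_q(\tau^i) \circ R^1 K_q(\gamma^i) = R^1 K_q\bigl(M_i \underset{k}{\otimes} \alpha^{i-1}\bigr).
\]
By Lemma~\ref{lem:14}(ii) the factor $R^1 K_q(\tau^i)$ is an isomorphism, so it will be enough to show that the right-hand side is surjective. But by the natural isomorphism of Lemma~\ref{lem:self-injective_standard_isomorphisms}(iv), this right-hand side is identified with $\Ext_Q^1(S \langle q \rangle, M_i) \underset{k}{\otimes} \alpha^{i-1}$; and since $\alpha^{i-1} \colon I^{i-1} \twoheadrightarrow B^i$ is an epimorphism by the construction in Setup~\ref{set:self-injective2}, and since $- \underset{k}{\otimes} -$ preserves epimorphisms (tensoring over the field $k$ is exact), the conclusion follows.

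I do not foresee any substantial obstacle: all three ingredients are already in place, and the argument is essentially a diagram chase combined with the elementary fact that tensoring over a field preserves surjectivity. The only point that deserves a careful look is the commutative identity $\tau^i \circ \gamma^i = M_i \underset{k}{\otimes} \alpha^{i-1}$, but this is immediate on inspecting the square formed by the second non-trivial column and the third non-trivial row of Diagram~\eqref{equ:pushout}.
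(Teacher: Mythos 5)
Your proof is correct and is essentially the paper's own argument, up to the harmless index shift: both reduce, via the long exact sequence for the row $0 \to M_i \underset{k}{\otimes} I^{i-1} \to T^i \to T^{i-1} \to 0$ of Diagram~\eqref{equ:pushout}, to showing that $R^1K_q(\gamma^i)$ is an epimorphism, and both get this from the lower-left square of that diagram by combining the isomorphism $R^1K_q(\tau^i)$ of Lemma~\ref{lem:14}(ii) with the identification of $R^1K_q(M_i \otimes \alpha^{i-1})$ via Lemma~\ref{lem:self-injective_standard_isomorphisms}(iv) and exactness of $\underset{k}{\otimes}$. No changes needed.
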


\begin{proof}
If $i \geqslant 0$ then Diagram \eqref{equ:pushout} contains a short exact sequence $0 \xrightarrow{} M_{ i+1 } \underset{ k }{ \otimes } I^i \xrightarrow{ \gamma^{ i+1 } } T^{ i+1 } \xrightarrow{ \delta^{ i+1} } T^i \xrightarrow{} 0$.  It induces a long exact sequence containing
\[
  R^1 K_q( M_{ i+1 } \underset{ k }{ \otimes } I^i )
  \xrightarrow{ R^1 K_q( \gamma^{ i+1 } ) } R^1 
  K_q( T^{ i+1 } )
  \xrightarrow{ R^1 K_q( \delta^{ i+1} ) } 
  R^1 K_q( T^i ),
\]
so it is enough to see that $R^1 K_q( \gamma^{ i+1 } )$ is an epimorphism.

Setup \ref{set:self-injective2} gives an epimorphism $I^i \xrightarrow{ \alpha^i } B^{ i+1 }$, and
\[
  R^1 K_q( M_{ i+1 } \underset{ k }{ \otimes } I^i ) 
  \xrightarrow{ R^1 K_q( M_{ i+1 } \otimes \alpha^i ) }
  R^1 K_q( M_{ i+1 } \underset{ k }{ \otimes } B^{ i+1 } )
\]
is an epimorphism because Lemma \ref{lem:self-injective_standard_isomorphisms}(iv) says it can be identified with the morphism obtained by applying the exact functor $\Ext_Q^1( S \langle q \rangle,M_{ i+1 } ) \underset{ k }{ \otimes } -$ to $\alpha^i$.  Combining this with Lemma \ref{lem:14}(ii) shows that applying $R^1 K_q$ to the lower square in Diagram \eqref{equ:pushout} gives the following commutative square with an epimorphism on the left and an isomorphism on the right. 
\[
\vcenter{
  \xymatrix @-0.98pc {
    R^1 K_q( M_{ i+1 } \underset{ k }{ \otimes } I^i ) \ar^-{ R^1 K_q( \gamma^{ i+1 } ) }[rr] \ar@{->>}_{ R^1 K_q( M_{ i+1 } \otimes \alpha^i ) }[dd] && R^1 K_q( T^{ i+1 } )  \ar^{ R^1 K_q( \tau^{ i+1 } ) }_{ \mbox{\rotatebox{90}{$\sim$}} }[dd] \\
    \\
    R^1 K_q( M_{ i+1 } \underset{ k }{ \otimes } B^{ i+1 } ) \ar@{=}[rr] && R^1 K_q( M_{ i+1 } \underset{ k }{ \otimes } B^{ i+1 } ) \\
            }
        }
\]
Hence $R^1 K_q( \gamma^{ i+1 } )$ is an epimorphism as desired.
\end{proof}

\begin{Lemma}
\label{lem:self-injective_limT}
For each $q \in Q$ we have:
\begin{enumerate}
\setlength\itemsep{4pt}

  \item  $K_q( T ) \cong \invlim\, K_q( T^i )$, 

  \item  There is a short exact sequence
\[
  0
  \xrightarrow{}
  R^1\invlim\, K_q( T^i )
  \xrightarrow{}
  R^1 K_q( T )
  \xrightarrow{}
  \invlim\, R^1 K_q( T^i )
  \xrightarrow{}
  0.
\]

\end{enumerate}
\end{Lemma}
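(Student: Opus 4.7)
My plan is to reduce both statements to a single Milnor-type short exact sequence for $T = \invlim T^i$ and then apply $K_q$.

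First, by Diagram \eqref{equ:pushout} each transition map $\delta^i\colon T^{i+1} \to T^i$ in the inverse system \eqref{equ:25_11} is an epimorphism, so the system $\{T^i\}$ satisfies the Mittag-Leffler condition and $R^1\invlim T^i = 0$. Since the index set is $\mathbb{N}$, this gives a short exact sequence in $\cX$
\[
0 \longrightarrow T \longrightarrow \prod_{i \ge 0} T^i \xrightarrow{\;1-\sigma\;} \prod_{i \ge 0} T^i \longrightarrow 0,
\]
where $\sigma$ is the shift-and-transition endomorphism whose $i$-th component is $\delta^{i+1}\circ\operatorname{pr}_{i+1}$.

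Next, I would apply the left-exact functor $K_q$ and invoke the fact that both $K_q$ and $R^1K_q$ commute with arbitrary products. For $K_q = \Hom_Q(S\langle q\rangle,-)$ this is formal, and for $R^1K_q = \Ext_Q^1(S\langle q\rangle,-)$ (see Lemma \ref{lem:self-injective_standard_isomorphisms}(ii)) it follows by fixing a projective resolution $P_\bullet \to S\langle q\rangle$ in ${}_Q\Mod$, using the natural isomorphism $\Hom_Q(P_\bullet,\prod_i T^i) \cong \prod_i \Hom_Q(P_\bullet,T^i)$, and noting that products are exact in ${}_R\Mod$ so cohomology commutes with them. This yields a long exact sequence
\[
0 \to K_q(T) \to \prod_i K_q(T^i) \xrightarrow{1-\sigma} \prod_i K_q(T^i) \to R^1K_q(T) \to \prod_i R^1K_q(T^i) \xrightarrow{1-\sigma} \prod_i R^1K_q(T^i).
\]

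Finally, I extract the two claims using the standard identifications $\invlim = \ker(1-\sigma)$ and $R^1\invlim = \operatorname{coker}(1-\sigma)$ for $\mathbb{N}$-indexed inverse systems. The left end of the sequence gives $K_q(T) \cong \invlim K_q(T^i)$, which is part (i). Splicing the middle portion yields
\[
0 \to R^1\invlim K_q(T^i) \to R^1K_q(T) \to \invlim R^1K_q(T^i) \to 0,
\]
which is part (ii). The only step requiring real care is the commutation of $R^1K_q$ with products sketched above; the rest is bookkeeping about the Milnor sequence and the definition of $R^1\invlim$.
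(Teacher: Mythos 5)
Your proposal is correct and follows essentially the same route as the paper: both use the Mittag--Leffler/Milnor short exact sequence $0 \to T \to \prod_i T^i \to \prod_i T^i \to 0$ from Paragraph \ref{bfhpg:invlim}, apply $K_q$ to get a long exact sequence, and identify $R^{\ell}K_q\big(\prod_i T^i\big) \cong \prod_i R^{\ell}K_q(T^i)$ (the paper cites Paragraph \ref{bfhpg:Tor_and_Ext}(i), whose justification is exactly your exactness-of-products argument) before reading off both parts. No gaps.
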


\begin{proof}
Recall from Construction \ref{con:self-injective}(ii) that $T$ is the inverse limit of
$\cdots \xrightarrow{ \delta^2 } T^1 \xrightarrow{ \delta^1 } T^0$.  Each $\delta^i$ is an epimorphism by Diagram \eqref{equ:pushout}, so this system satisfies the Mittag-Leffler condition and there is a short exact sequence
\[
  0
  \xrightarrow{}
  T
  \xrightarrow{}
  \prod_i T^i
  \xrightarrow{ \id - \operatorname{ shift } }
  \prod_i T^i
  \xrightarrow{}
  0,
\]
see Paragraph \ref{bfhpg:invlim}.  It gives a long exact sequence containing
\begin{align}
\nonumber
  0
  & \xrightarrow{}
  K_q( T )
  \xrightarrow{}
  K_q\Big( \prod_i T^i \Big)
  \xrightarrow{ K_q( \id - \operatorname{ shift } ) }
  K_q\Big( \prod_i T^i \Big) \\[1mm]
\label{equ:20_1}
  & \xrightarrow{}
  R^1 K_q( T )
  \xrightarrow{}
  R^1 K_q\Big( \prod_i T^i \Big)
  \xrightarrow{ R^1 K_q( \id - \operatorname{ shift } ) }
  R^1 K_q\Big( \prod_i T^i \Big).
\end{align}
Combining Equation \eqref{equ:self-injective_CSK} and Paragraph \ref{bfhpg:Tor_and_Ext}(i) shows that there are natural isomorphisms
\[
  R^{ \ell } K_q\Big( \prod T^i \Big)
  \xrightarrow{ \cong }
  \prod R^{ \ell }K_q( T^i ),
\]
so \eqref{equ:20_1} can be identified with
\begin{align*}
  0
  & \xrightarrow{}
  K_q( T )
  \xrightarrow{}
  \prod_i K_q( T^i )
  \xrightarrow{ \id - \operatorname{ shift } }
  \prod_i K_q( T^i ) \\[1mm]
  & \xrightarrow{}
  R^1 K_q( T )
  \xrightarrow{}
  \prod_i R^1 K_q( T^i )
  \xrightarrow{ \id - \operatorname{ shift } }
  \prod R^1 K_q( T^i ),
\end{align*}
which implies both parts of the lemma.
\end{proof}

\begin{Lemma}
\label{lem:self-injective_KqMB}
Assume that $( \cA,\cB )$ is hereditary, that $B^0 \in \cB$, and that $M \in {}_{ Q }\!\Mod$ has finite length.  If $i \geqslant 0$ and $q \in Q$ then $K_q( M )\underset{ k }{ \otimes } B^i \in \cB$.  
\end{Lemma}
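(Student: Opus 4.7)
The plan is to split the proof into two independent steps. First, I would show by induction on $i$ that $B^i \in \cB$ for all $i \geq 0$. The case $i = 0$ is given by hypothesis. For the inductive step, Setup \ref{set:self-injective2} provides a short exact sequence $0 \to B^i \to I^i \to B^{i+1} \to 0$ in ${}_R\!\Mod$. The term $I^i$ is injective, hence lies in $\cB$ because $\Ext^1_R(-,I^i) = 0$ on all of ${}_R\!\Mod$, and in particular on $\cA$; by induction $B^i \in \cB$ as well. Since $(\cA,\cB)$ is hereditary, $\cB$ is closed under cokernels of monomorphisms by Definition \ref{def:cotorsion}(iii), so $B^{i+1} \in \cB$ as required.

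Second, I would observe that $K_q(M) \underset{k}{\otimes} B^i$ is isomorphic as an $R$-module to a finite direct sum of copies of $B^i$, and hence lies in $\cB$. Because $Q$ satisfies conditions (Fin) and (Rad), the simple objects $S\langle p \rangle$ for $p \in Q$ are pairwise non-isomorphic, and $\Hom_Q(S\langle q\rangle, S\langle p\rangle) \cong k$ if $p = q$ and $0$ otherwise. The finite-length hypothesis on $M$ then forces $K_q(M) = \Hom_Q(S\langle q\rangle, M)$ to be a finite-dimensional $k$-vector space, say of dimension $n$ (bounded by the length of $M$). Consequently, there is a natural $R$-module isomorphism $K_q(M) \underset{k}{\otimes} B^i \cong (B^i)^n$. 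Since $\cB = \cA^{\perp}$ is closed under finite direct sums --- as $\Ext^1_R$ commutes with finite products in its second variable --- this finite direct sum lies in $\cB$, which finishes the proof.

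The argument presents no serious obstacle; it is essentially a bookkeeping lemma assembling two ingredients already implicit in the surrounding material. The only point requiring some care is confirming that $K_q(M)$ really is finite-dimensional over $k$ when $M$ has finite length in ${}_Q\!\Mod$, which reduces to the standard computation of $\Hom_Q$ between simples under conditions (Fin) and (Rad).
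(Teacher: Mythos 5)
Your proof is correct and follows the same route as the paper's: the paper deduces $B^i \in \cB$ for all $i$ from heredity (citing \cite[lem.\ 5.24]{GT}, which is exactly your cosyzygy induction), notes $\dim_k K_q(M) < \infty$ via Paragraph \ref{bfhpg:simple}(v) (which you re-derive from the filtration of $M$ by simples), and concludes since $K_q(M) \underset{k}{\otimes} B^i$ is a finite coproduct of copies of $B^i$. The only difference is that you spell out inline what the paper handles by citation.
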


\begin{proof}
Since $( \cA,\cB )$ is hereditary, $B^i \in \cB$ for each $i \geqslant 0$ by \cite[lem.\ 5.24]{GT}.  Since $M \in {}_{ Q }\!\Mod$ has finite length, $\dim_k K_q( M ) < \infty$ by Paragraph \ref{bfhpg:simple}(v).  Hence $K_q( M ) \underset{ k }{ \otimes } B^i$ is in $\cB$ because it is a finite coproduct of copies of $B^i$.  
\end{proof}

\begin{Lemma}
\label{lem:self-injective_limU}
Assume that $( \cA,\cB )$ is hereditary and that $B^0 \in \cB$.  Then
\[
  \invlim
  \big(
  \cdots
  \xrightarrow{}
  \Image K_q( \delta^4 )
  \xrightarrow{ \varphi^4 }
  \Image K_q( \delta^3 )
  \xrightarrow{ \varphi^3 }
  \Image K_q( \delta^2 )
  \big)
\]
is in $\cB$.  
\end{Lemma}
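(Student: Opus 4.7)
The plan is twofold: (a) show that each term $Y_i := \Image K_q(\delta^{i+1})$ of the inverse system lies in $\cB$, and (b) conclude that the inverse limit does as well, via a Mittag-Leffler computation together with $\Ext^1$-vanishing.

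For (a), I would argue by induction on $i \geq 1$. The inductive step comes immediately from Lemma \ref{lem:22}: the short exact sequence $0 \to K_q(M_{i+1}) \otimes_k B^i \to Y_{i+1} \to Y_i \to 0$ has left term in $\cB$ by Lemma \ref{lem:self-injective_KqMB}, so $Y_{i+1} \in \cB$ whenever $Y_i \in \cB$, since $\cB = \cA^{\perp}$ is closed under extensions. For the base case $Y_1 \in \cB$, Lemma \ref{lem:21}(ii) with $i=1$ yields $0 \to K_q(M_2) \otimes_k I^1 \to K_q(T^2) \to Y_1 \to 0$; the left term lies in $\cB$ (being a finite coproduct of copies of the injective module $I^1$), and since $(\cA,\cB)$ is hereditary $\cB$ is closed under cokernels of monomorphisms, so it suffices to prove $K_q(T^2) \in \cB$. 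More generally, I would show $K_q(T^i) \in \cB$ for all $i \geq 0$ as follows: since $E^i \in \cE$ (Lemma \ref{lem:8}) is injective in ${}_Q\Mod$ (Proposition \ref{pro:self-injective_Ex}), one has $R^1 K_q(E^i) = 0$, and applying $K_q$ to the fundamental sequence \eqref{equ:self-injective_fundamental_sequence} gives a short exact sequence
\[
0 \to K_q(E^i) \to K_q(T^i) \to K_q(M_i) \otimes_k B^i \to 0,
\]
whose right term is in $\cB$ by Lemma \ref{lem:self-injective_KqMB}. Reducing further, $K_q(E^i) \in \cB$ follows by a parallel induction using the exact column $0 \to E^{i-1} \to E^i \to P_{i-1} \otimes_k B^{i-1} \to 0$ of Diagram \eqref{equ:pullback}: applying $K_q$ (again with $R^1 K_q(E^{i-1}) = 0$), one gets a short exact sequence which together with Lemma \ref{lem:self-injective_KqMB} and closure of $\cB$ under extensions carries the induction (base case $E^0 = 0$).

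For (b), the transition maps $\varphi^{i+2}$ are epimorphisms by Lemma \ref{lem:22}, so the system is Mittag-Leffler, and $U := \invlim Y_i$ fits in a short exact sequence
\[
0 \to U \to \prod_{i \geq 1} Y_i \xrightarrow{\;\id - \text{shift}\;} \prod_{i \geq 1} Y_i \to 0.
\]
For any $A \in \cA$, applying $\Hom_R(A,-)$ and using that in ${}_R\Mod$ one has $\Ext^1_R(A, \prod Y_i) \cong \prod \Ext^1_R(A, Y_i) = 0$ (each $Y_i \in \cB$ by (a)), I obtain $\Ext^1_R(A, U) \cong R^1 \invlim \Hom_R(A, Y_i)$. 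To see that this vanishes, apply $\Hom_R(A,-)$ to $0 \to K_q(M_{i+1}) \otimes_k B^i \to Y_{i+1} \to Y_i \to 0$: since $K_q(M_{i+1}) \otimes_k B^i \in \cB$, the map $\Hom_R(A, Y_{i+1}) \to \Hom_R(A, Y_i)$ is surjective, so the Hom-system is Mittag-Leffler and $R^1 \invlim = 0$. Hence $\Ext^1_R(A, U) = 0$ for all $A \in \cA$, i.e., $U \in \cA^{\perp} = \cB$.

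The main technical obstacle is the base case $Y_1 \in \cB$, which forces the detour through $K_q(T^i) \in \cB$ and hence the nested induction on $K_q(E^i)$; once this is in place, the inductive propagation along the system and the Mittag-Leffler / $\Ext^1$ argument for the inverse limit are routine manipulations.
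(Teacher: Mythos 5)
Your argument is correct, but it is organised differently from the paper's. The paper gets the conclusion from the Lukas Lemma (recorded in Paragraph \ref{bfhpg:invlim}, \cite[lem.\ 6.37]{GT}): since $\cB = \cA^{\perp}$, it suffices that the $\varphi^i$ are epimorphisms and that $\Ker \varphi^i$ and the single base term $\Image K_q( \delta^2 )$ lie in $\cB$; the first two points come from Lemma \ref{lem:22} together with Lemma \ref{lem:self-injective_KqMB}, and the base term is handled by the direct identification $\Image K_q( \delta^2 ) \cong \Image K_q( \varepsilon^1 ) \cong K_q( E^1 ) \cong K_q( P_0 ) \underset{k}{\otimes} B^0$ via Lemma \ref{lem:15}, the sequence \eqref{equ:self-injective_fundamental_sequence} and Equation \eqref{equ:self-injective_small_ET}. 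You instead prove the stronger statement that \emph{every} term $\Image K_q( \delta^{i+1} )$ is in $\cB$ — which forces the nested inductions showing $K_q( E^i ) \in \cB$ and $K_q( T^i ) \in \cB$, and a more roundabout base case through Lemma \ref{lem:21}(ii) plus closure of $\cB$ under cokernels of monomorphisms — and then you re-derive by hand the instance of the Lukas Lemma you need: the short exact sequence $0 \to U \to \prod Y_i \to \prod Y_i \to 0$ from the Mittag--Leffler condition, $\Ext^1_R( A, \prod Y_i ) \cong \prod \Ext^1_R( A, Y_i ) = 0$, and vanishing of $R^1 \invlim \Hom_R( A, Y_i )$ because the Hom-system has surjective transition maps (the kernels $K_q( M_{i+1} ) \underset{k}{\otimes} B^i$ being in $\cB$). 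Both routes are sound and use the same core inputs (Lemma \ref{lem:22}, Lemma \ref{lem:self-injective_KqMB}, $R^1 K_q$-vanishing on $\cE$); the paper's is shorter because the Lukas Lemma only needs the base term and the kernels in $\cB$, while yours is self-contained (no appeal to \cite[lem.\ 6.37]{GT}) and establishes the term-wise membership $K_q( T^i ), \Image K_q( \delta^{i+1} ) \in \cB$, which is more than the lemma asks for.
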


\begin{proof}
Since $\cB = \cA^{ \perp }$, the Lukas Lemma implies that it is enough to show the following, see Paragraph \ref{bfhpg:invlim}.
\begin{enumerate}
\setlength\itemsep{4pt}

  \item  $\varphi^i$ is an epimorphism for $i \geqslant 3$.
  
  \item  $\Image K_q( \delta^2 ) \in \cB$.
  
  \item  $\Ker \varphi^i \in \cB$ for $i \geqslant 3$.  

\end{enumerate}
Lemma \ref{lem:22} gives (i).  It also gives $\Ker \varphi^{ i+2 } \cong K_q( M_{ i+1 } ) \underset{ k }{ \otimes } B^i$ for $i \geqslant 1$, and this is in $\cB$ by Lemma \ref{lem:self-injective_KqMB} since $M_{ i+1 }$ has finite length by Setup \ref{set:self-injective2}.  This shows (iii).  

To show (ii), we compute:
\[
  \Image K_q( \delta^2 )
  \stackrel{ \rm (a) }{ \cong }
  \Image K_q( \varepsilon^1 )
  \stackrel{ \rm (b) }{ \cong }
  K_q( E^1 )
  \stackrel{ \rm (c) }{ \cong }
  K_q( P_0 \underset{ k }{ \otimes } B^0 )
  \stackrel{ \rm (d) }{ \cong }
  K_q( P_0 ) \underset{ k }{ \otimes } B^0.
\]
Here (a) is by Lemma \ref{lem:15}.  For (b), apply the left exact functor $K_q$ to the short exact sequence \eqref{equ:self-injective_fundamental_sequence} for $i = 1$.  Equation \eqref{equ:self-injective_small_ET} implies (c), and Lemma \ref{lem:self-injective_standard_isomorphisms}(iv) implies (d).  But $K_q( P_0 ) \underset{ k }{ \otimes } B^0 \in \cB$ by 
Lemma \ref{lem:self-injective_KqMB} since $P_0$ has finite length by Setup \ref{set:self-injective2}.
\end{proof}

\begin{Lemma}
\label{lem:23}
Assume that $( \cA,\cB )$ is hereditary and that $B^0 \in \cB$.  Then $T \in \Psi( \cB )$.
\end{Lemma}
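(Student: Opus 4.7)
The plan is to verify the two conditions defining $\Psi(\cB)$ for each $q \in Q$, namely that $K_q(T) \in \cB$ and $R^1 K_q(T) = 0$. Lemma \ref{lem:self-injective_limT} reduces both to computations on the inverse systems $(K_q(T^i))_i$ and $(R^1 K_q(T^i))_i$ and their derived limits.

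The central step is to assemble, for $i \geq 1$, the short exact sequences from Lemma \ref{lem:21}(i),
\[
  0 \xrightarrow{} U^i \xrightarrow{} K_q(T^i) \xrightarrow{} K_q(M_i \underset{ k }{ \otimes } B^i) \xrightarrow{} 0
  \quad\text{where } U^i := \Image K_q(\delta^{i+1}),
\]
into a short exact sequence of inverse systems with transitions induced by $K_q(\delta^{i+1}): K_q(T^{i+1}) \to K_q(T^i)$. On the left, this restricts to $\varphi^{i+2}: U^{i+1} \to U^i$ by Definition \ref{def:self-injective_varphi}. Since $\Image K_q(\delta^{i+1}) = U^i$ by definition, the composite $K_q(T^{i+1}) \to K_q(T^i) \twoheadrightarrow K_q(M_i \underset{ k }{ \otimes } B^i)$ vanishes, so the induced transition map on the right-hand system is identically zero.

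Passing to inverse limits, the left-hand tower $(U^i)$ has surjective transitions $\varphi^{i+2}$ by Lemma \ref{lem:22}, so it satisfies Mittag-Leffler and $R^1 \invlim U^i = 0$; the right-hand tower, having zero transitions, has vanishing inverse limit (any compatible sequence is forced to be identically zero). Using $K_q(T) \cong \invlim K_q(T^i)$ from Lemma \ref{lem:self-injective_limT}(i), the limit exact sequence collapses to $K_q(T) \cong \invlim U^i$, which lies in $\cB$ by Lemma \ref{lem:self-injective_limU}.

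Finally, I would show $R^1 K_q(T) = 0$ via Lemma \ref{lem:self-injective_limT}(ii). The term $\invlim R^1 K_q(T^i)$ vanishes because Lemma \ref{lem:19} forces every transition map $R^1 K_q(\delta^i)$ to be zero. The remaining term $R^1 \invlim K_q(T^i)$ vanishes once Mittag-Leffler is established for $(K_q(T^i))_i$; I would prove this by induction on $j \geq i+1$, showing that the image of $K_q(T^j) \to K_q(T^i)$ equals $U^i$, because this composite factors as $K_q(T^j) \twoheadrightarrow U^{j-1} \twoheadrightarrow \cdots \twoheadrightarrow U^i$ through the surjections from Lemma \ref{lem:22}. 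The main delicate point of the whole argument is verifying that the right-hand transition in the short exact sequence of towers really is the zero map; once that is in hand, the two Mittag-Leffler arguments and the concluding vanishings are routine.
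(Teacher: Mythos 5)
Your proposal is correct and follows essentially the same route as the paper: it builds the short exact sequence of inverse systems from Lemma \ref{lem:21}(i) with $\Image K_q( \delta^{ i+1 } )$ on one side and zero transition maps on the $K_q( M_i \underset{ k }{ \otimes } B^i )$ side, identifies $K_q( T ) \cong \invlim \Image K_q( \delta^i ) \in \cB$ via Lemmas \ref{lem:self-injective_limT}(i) and \ref{lem:self-injective_limU}, and kills $R^1 K_q( T )$ through Lemma \ref{lem:self-injective_limT}(ii) using Mittag-Leffler (from the surjectivity in Lemma \ref{lem:22}) and the vanishing transitions of Lemma \ref{lem:19}. The ``delicate point'' you single out -- that the induced transitions on the quotient system are zero because $\Image K_q( \delta^{ i+1 } ) = \Ker K_q( \tau^i )$ -- is exactly the observation underlying the zero maps in the paper's diagram.
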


\begin{proof}
Let $q \in Q$ be given.  By Definition \ref{def:Phi_Psi} we must show $K_q( T ) \in \cB$ and $R^1K_q( T ) = 0$.

$K_q( T ) \in \cB$:  Lemma \ref{lem:21}(i) gives the vertical short exact sequences in the following diagram.
\[
\vcenter{
  \xymatrix @-0.98pc {
    \cdots \ar[rr] && \Image K_q( \delta^4 ) \ar^{ \varphi^4 }[rr] \ar@{^(->}[dd] && \Image K_q( \delta^3 ) \ar^{ \varphi^3 }[rr] \ar@{^(->}[dd] && \Image K_q( \delta^2 ) \ar@{^(->}[dd] \\
    \\
    \cdots \ar[rr] && K_q( T^3 ) \ar^{ K_q( \delta^3 ) }[rr] \ar@{->>}_{ K_q( \tau^3 ) }[dd] && K_q( T^2 ) \ar^{ K_q( \delta^2)  }[rr] \ar@{->>}^{ K_q( \tau^2 ) }[dd] && K_q( T^1 ) \ar@{->>}^{ K_q( \tau^1 ) }[dd] \\
    \\
    \cdots \ar[rr] && K_q( M_3 \underset{ k }{ \otimes } B^3 ) \ar_0[rr] && K_q( M_2 \underset{ k }{ \otimes } B^2 ) \ar_0[rr] && K_q( M_1 \underset{ k }{ \otimes } B^1 ) \\
            }
        }
\]
The upper squares are commutative by Definition \ref{def:self-injective_varphi}, and the lower squares are obviously commutative, so the diagram constitutes a short exact sequence of inverse systems.  The long exact $\invlim$ sequence contains
\[
  0
  \xrightarrow{}
  \invlim\, \Image K_q( \delta^i )
  \xrightarrow{}
  \invlim\, K_q( T^i )
  \xrightarrow{}
  \invlim\, K_q( M_i \underset{ k }{ \otimes } B^i ),
\]
where the last term is zero because all morphisms in the third inverse system are zero.  This gives the first of the following isomorphisms,
\[
  \invlim\, \Image K_q( \delta^i )
  \cong
  \invlim\, K_q( T^i )
  \cong
  K_q( T ),
\]
and the second isomorphism is by Lemma \ref{lem:self-injective_limT}(i).  However, the left hand side is in $\cB$ by Lemma \ref{lem:self-injective_limU}.

$R^1K_q( T ) = 0$:  We show this without using the assumption $B^0 \in \cB$.  If $i \geqslant 2$ and $q \in Q$ then the epimorphism in the short exact sequence of Lemma \ref{lem:22} shows
\[
  \Image \big( K_q( \delta^i ) \circ K_q( \delta^{ i+1 } ) \big) = \Image K_q( \delta^i ).
\]
Hence the system
\[
  \cdots \xrightarrow{ K_q( \delta^2 ) } K_q( T^1 ) \xrightarrow{ K_q( \delta^1 ) } K_q( T^0 )
\]  
satisfies the Mittag-Leffler condition, so the first term of the exact sequence in Lemma \ref{lem:self-injective_limT}(ii) is zero, see Paragraph \ref{bfhpg:invlim}.  The last term is zero because Lemma \ref{lem:19} says that the morphisms vanish in the inverse system 
\[
  \cdots \xrightarrow{ R^1 K_q( \delta^2 ) } R^1 K_q( T^1 ) \xrightarrow{ R^1 K_q( \delta^1 ) } R^1 K_q( T^0 ).
\]  
Hence $R^1 K_q( T ) = 0$ as desired.
\end{proof}

\begin{Proposition}
\label{pro:25}
In the situation of Setup \ref{set:self-injective}, if $( \cA,\cB )$ is hereditary then condition (Seq) of Definition \ref{def:ExSeq} holds.
\end{Proposition}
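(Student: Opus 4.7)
The plan is to prove condition (Seq)(ii) directly using Construction \ref{con:self-injective}; condition (Seq)(i) then follows by the strictly dual argument, replacing the adjoint pair $(S_q,K_q)$ by $(C_q,S_q)$, $S\langle q\rangle \in {}_{Q}\!\Mod$ by $\dual\!S\langle q\rangle \in \Mod_Q$, injective resolutions in ${}_{R}\!\Mod$ by projective ones, and inverse limits by directed colimits. Given $q \in Q$ and $B \in \cB$, I specialise Setup \ref{set:self-injective2} to $M_0 = S\langle q\rangle$ and $B^0 = B$. Taking $\invlim$ of diagram \eqref{equ:25_11} will produce the desired sequence: the cokernel of each vertical inclusion is the epimorphism $\delta^i$ or $\omega^i$, so all three systems satisfy the Mittag-Leffler condition, and by Paragraph \ref{bfhpg:invlim} the $\invlim$ sequence is short exact. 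Combined with \eqref{equ:T0} and the identity $M_0 \otimes_{k} B^0 = S_q(B)$, this yields $0 \to W \to T \to S_q(B) \to 0$ with $T \in \Psi(\cB)$ by Lemma \ref{lem:23}.

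The main obstacle is verifying $\Ext^2_{\cX}(\cE,W) = 0$. Lemma \ref{lem:self-injective_E_perp} reduces this to $W \in \cE^{\perp_\infty}$, so the strategy is to show first that every $W^i \in \cE^{\perp_\infty}$ by induction on $i$. A short chase in \eqref{equ:25_11} identifies $\Ker \omega^i$ with $\Ker \delta^{i+1}$, which by the middle row of Diagram \eqref{equ:pushout} equals $M_{i+1} \otimes_{k} I^i$; this gives the induction step short exact sequence
\[
  0 \to M_{i+1} \otimes_{k} I^i \to W^{i+1} \to W^i \to 0,
\]
starting from $W^0 = 0$. Since $M_{i+1}$ has finite length in ${}_{Q}\!\Mod$ and $I^i$ is injective in ${}_{R}\!\Mod$, Lemma \ref{lem:24} gives $M_{i+1} \otimes_{k} I^i \in \cE^{\perp} = \cE^{\perp_\infty}$, and the induction closes.

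To pass from the $W^i$ to $W = \invlim W^i$, I use the Mittag-Leffler short exact sequence of Paragraph \ref{bfhpg:invlim},
\[
  0 \to W \to \prod_{i} W^i \xrightarrow{\,\id - \operatorname{shift}\,} \prod_{i} W^i \to 0,
\]
which is available because each $\omega^i$ is an epimorphism. Since $\Ext^n_{\cX}(E,-)$ preserves products in the second variable, the previous step gives $\Ext^n_{\cX}(E, \prod_{i} W^i) = 0$ for every $E \in \cE$ and every $n \geqslant 1$; the long exact sequence then forces $\Ext^n_{\cX}(E,W) = 0$, and in particular the desired $\Ext^2$ vanishing. The dual proof of (i) is mechanical once one records the dual of Lemma \ref{lem:24} (namely $P \otimes_{k} N \in {}^{\perp}\cE$ for $P$ projective in ${}_{R}\!\Mod$ and $N$ of finite length in $\Mod_Q$), and notes that directed colimits in $\cX$ are exact, so no Mittag-Leffler hypothesis is needed on that side.
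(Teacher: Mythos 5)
Your proposal is correct and follows the paper's own route almost verbatim: specialise Construction \ref{con:self-injective} to $M_0=S\langle q\rangle$, $B^0=B$, take inverse limits of \eqref{equ:25_11} (legitimate since the $\delta^i$ and $\omega^i$ are epimorphisms, see Paragraph \ref{bfhpg:invlim}) to get $0\to W\to T\to S_q(B)\to 0$, quote Lemma \ref{lem:23} for $T\in\Psi(\cB)$, and identify the kernels with $M_{i+1}\underset{k}{\otimes}I^i$ so that Lemma \ref{lem:24} applies; note your phrase ``$\Ker\omega^i$ identifies with $\Ker\delta^{i+1}$'' is off by one (it is $\Ker\omega^{i+1}\cong\Ker\delta^{i+1}$), but the displayed sequence $0\to M_{i+1}\underset{k}{\otimes}I^i\to W^{i+1}\to W^i\to 0$ is the correct one, exactly as in the paper. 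The only divergence is the final step: the paper concludes $W\in\cE^{\perp}$ by citing the Lukas Lemma (Paragraph \ref{bfhpg:invlim}) and then upgrades via Lemma \ref{lem:self-injective_E_perp}, whereas you first prove $W^i\in\cE^{\perp_{\infty}}$ by induction and then use the sequence $0\to W\to\prod_i W^i\to\prod_i W^i\to 0$ together with preservation of products by $\Ext^n_{\cX}(E,-)$ (valid because products are exact in $\cX$, though the paper records this only for $\Ext_Q$ in Paragraph \ref{bfhpg:Tor_and_Ext}(i)); this amounts to inlining a proof of the Lukas Lemma rather than invoking it. Your argument cleanly gives $\Ext^n_{\cX}(E,W)=0$ for $n\geqslant 2$, which is all that (Seq)(ii) requires; the stronger claim that it also gives $n=1$ does not follow as written, since for $n=1$ one must check that $\id-\operatorname{shift}$ is surjective on $\prod_i\Hom_{\cX}(E,W^i)$ --- which does hold, because $\Ext^1_{\cX}(E,\Ker\omega^i)=0$ makes the transition maps of that Hom-system surjective, and this is precisely the content of the Lukas Lemma. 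Finally, in your sketched dual of Lemma \ref{lem:24} the finite-length object should lie in ${}_{Q}\!\Mod$ rather than $\Mod_Q$ for the tensor product to land in $\cX$; with that correction, your outline for (Seq)(i), including the remark that exactness of direct limits removes the Mittag--Leffler issues, matches the paper's own (equally brief) treatment.
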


\begin{proof}
We show condition (Seq)(ii).  Condition (Seq)(i) follows by a dual argument, parts of which are simplified by exactness of direct limits.

Let $q \in Q$ and $B \in \cB$ be given.  Set $M_0 = S \langle q \rangle$ and $B^0 = B$ in Setup \ref{set:self-injective2}.
Consider the short exact sequence of inverse systems \eqref{equ:25_11}.
The morphisms in the inverse systems are epimorphisms, so Paragraph \ref{bfhpg:invlim} says there is an induced short exact sequence
\[
  0
  \xrightarrow{}
  \invlim\, W^i
  \xrightarrow{}
  \invlim\, T^i
  \xrightarrow{}
  \invlim\, T^0
  \xrightarrow{}
  0,
\]
which by Equations \eqref{equ:WT} and \eqref{equ:T0} reads
\[
  0
  \xrightarrow{}
  W
  \xrightarrow{}
  T
  \xrightarrow{}
  S_q( B )
  \xrightarrow{}
  0,
\]
where we have used $M_0 \underset{ k }{ \otimes } B^0 = S \langle q \rangle \underset{ k }{ \otimes } B = S_q( B )$ by Equation \eqref{equ:self-injective_CSK}.  We claim this sequence can be used as the sequence in condition (Seq)(ii).  

We have $T \in \Psi( \cB )$ by Lemma \ref{lem:23}, so it remains to show $\Ext_{ Q,R }^2( \cE,W ) = 0$.  By Lemma \ref{lem:self-injective_E_perp} it is enough to show $W \in \cE^{ \perp }$.  The Lukas Lemma can be applied to the first inverse system in \eqref{equ:25_11} because $\omega^i$ is an epimorphism for $i \geqslant 1$.  Hence it is sufficient to show the following, see Paragraph \ref{bfhpg:invlim}.
\begin{enumerate}
\setlength\itemsep{4pt}

  \item  $W^0 \in \cE^{ \perp }$.
  
  \item  $\Ker \omega^i \in \cE^{ \perp }$ for $i \geqslant 1$.  

\end{enumerate}
But (i) is trivially true because $W^0 = \Ker \Delta^0 = \Ker \id_{ T^0 } = 0$.  For (ii), let $i \geqslant 1$ be given.  From Diagram \eqref{equ:25_11} it is easy to prove the first of the isomorphisms
\[
  \Ker \omega^i
  \cong \Ker \delta^i
  \cong M_i \underset{ k }{ \otimes } I^{ i-1 }
  = (*),
\]
and the second isomorphism is by Diagram \eqref{equ:pushout}.  But $M_i$ has finite length and $I^{ i-1 }$ is injective by Setup \ref{set:self-injective2}, so $(*) \in \cE^{ \perp }$ by Lemma \ref{lem:24}.
\end{proof}


\section{Proof of Theorem \ref{thm:main2}}
\label{sec:main2}

Section \ref{sec:main2} continues to use Setup \ref{set:self-injective}, except that:
\begin{itemize}

  \item  $Q$ is the quiver with relations \eqref{equ:intro_quiver_2}, viewed as a $k$-preadditive category; see Section \ref{subsec:main2}.

\end{itemize}
We think of objects of ${}_{ Q }\!\Mod$ and $\cX$ as quiver representations.  In particular, the value of $X$ at $q$ is denoted $X_q$, instead of $X( q )$ which would be used if we thought of $X$ as a functor.  Recall from Section \ref{subsec:main2} that each $X \in \cX$ has the form
\[
\xymatrix
{
  X_{ N-1 } \ar^-{ \partial^X_{ N-1 } }[r] & X_{ N-2 } \ar^-{ \partial^X_{ N-2 } }[r] & \cdots \ar[r] & X_1 \ar^-{ \partial^X_1 }[r] & X_0, \ar^{ \partial^X_0 }@/^1.5pc/[llll] \\
}
\]
where two consecutive morphisms compose to $0$.  For $0 \leqslant q \leqslant N-1$ there is a homology functor $\cX \xrightarrow{ \H_q } {}_{ R }\!\Mod$ defined in an obvious fashion.

\begin{Lemma}
\label{lem:main2_functors}
For $0 \leqslant q \leqslant N-1$ and $X \in \cX$ we have
\[
  C_q( X ) = \Coker( \partial^X_{ q+1 } )
  \;\;,\;\;
  K_q( X ) = \Ker( \partial^X_q )
  \;\;,\;\;
  R^1K_q = \H_{ q-1 }
  \;\;,\;\;
  L_1C_q = \H_{ q+1 },
\]
with subscripts taken modulo $N$.
\end{Lemma}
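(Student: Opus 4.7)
The plan is to identify $C_q$ and $K_q$ directly from the defining adjunctions, and then to compute $L_1C_q$ and $R^1K_q$ by combining Lemma \ref{lem:self-injective_standard_isomorphisms}(i)--(ii) with explicit minimal projective resolutions of $\dual\!S\langle q\rangle$ in $\Mod_Q$ and of $S\langle q\rangle$ in ${}_{Q}\!\Mod$, respectively. The formulas for $C_q(X)$ and $K_q(X)$ are immediate unwindings of the adjunctions: because $S_q(M)$ is concentrated at vertex $q$, a morphism $X \to S_q(M)$ in $\cX$ is a single map $\phi \colon X_q \to M$, and the only nontrivial compatibility (the one with the incoming arrow $q+1\to q$) forces $\phi\circ\partial^X_{q+1}=0$, so $(C_q,S_q)$ gives $C_q(X) \cong \Coker(\partial^X_{q+1})$; dually $(S_q,K_q)$ yields $K_q(X) \cong \Ker(\partial^X_q)$.

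For the derived functors I would first describe the indecomposable projectives explicitly. Since the relations kill every path of length $\geqslant 2$, the indecomposable projective $P(p) = Q(p,-) \in {}_{Q}\!\Mod$ has $k$ at $p$ and at $p-1$ (the arrow $p\to p-1$ acting as the identity) and $0$ elsewhere; in particular its top is $S\langle p\rangle$ and its socle is $S\langle p-1\rangle$. Consequently the first syzygy of $S\langle q\rangle$ is $S\langle q-1\rangle$, and iterating (with indices taken mod $N$) produces the minimal projective resolution
\[
  \cdots \to P(q-2) \to P(q-1) \to P(q) \to S\langle q\rangle \to 0,
\]
in which each differential sends the top generator of the source to the socle generator of the target. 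Applying $\Hom_Q(-,X)$, invoking Yoneda in the form $\Hom_Q(P(p),X) \cong X_p$, and tracing through the differentials shows that the induced complex is
\[
  X_q \xrightarrow{\partial^X_q} X_{q-1} \xrightarrow{\partial^X_{q-1}} X_{q-2} \to \cdots,
\]
whose first cohomology is $\H_{q-1}(X)$, as required.

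The computation of $L_1C_q$ is entirely dual: the right projective $Q(-,p) \in \Mod_Q$ is supported at $p$ and $p+1$, leading to the resolution $\cdots \to Q(-,q+1) \to Q(-,q) \to \dual\!S\langle q\rangle \to 0$; tensoring with $X$ over $Q$ and invoking the co-Yoneda isomorphism $Q(-,p) \underset{Q}{\otimes} X \cong X_p$ produces the chain complex $\cdots \to X_{q+2} \to X_{q+1} \to X_q$, whose first homology is $\H_{q+1}(X)$. The only substantive ingredient is the explicit form of these two minimal projective resolutions; the critical simplification is that ``two consecutive arrows compose to zero'' forces every indecomposable projective to be supported on exactly two consecutive vertices, which makes each syzygy simple and the resolutions propagate periodically. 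I expect no serious obstacle, since the circular shape of $Q$ is accommodated automatically by reading all indices modulo $N$.
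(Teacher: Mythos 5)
Your proposal is correct and follows essentially the same route as the paper: the first two formulas are read off from the adjunctions with $S_q$, the indecomposable projectives are identified as being supported on two consecutive vertices, and the minimal projective resolutions $\cdots \to P\langle q-1\rangle \to P\langle q\rangle \to S\langle q\rangle \to 0$ (and its dual for $\dual\!S\langle q\rangle$ in $\Mod_Q$) are used together with $\Hom_Q(P\langle p\rangle,X)\cong X_p$ to compute $R^1K_q=\Ext^1_Q(S\langle q\rangle,-)$ and $L_1C_q=\Tor^Q_1(\dual\!S\langle q\rangle,-)$. The only difference is that you spell out the $L_1C_q$ computation that the paper dismisses with ``proved similarly,'' which is fine.
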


\begin{proof}
The functor $S_q : \cM \xrightarrow{} \cX$ sends an object $M$ to an object $S_q( M )$ which has $M$ at vertex $q$ and $0$ at all other vertices.  The two first formulae in the lemma are easily verified to define left and right adjoint functors to $S_q$, hence define $C_q$ and $K_q$.

The simple object $S \langle q \rangle$ has $k$ at vertex $q$ and $0$ at every other vertex.  There is an indecomposable projective object $P \langle q \rangle$, see Paragraph \ref{bfhpg:simple}(i).  It has copies of $k$ at vertices $q$ and $q-1$ and $0$ at every other vertex.  The homomorphism between the copies of $k$ is the identity map, and vertices are taken modulo $N$.  This permits to determine the minimal augmented projective resolution of $S \langle q \rangle$ in ${}_{ Q }\!\Mod$.  The first terms are the following, with indices taken modulo $N$.
\[
  \cdots
  \xrightarrow{}
  P \langle q-2 \rangle
  \xrightarrow{}
  P \langle q-1 \rangle
  \xrightarrow{}
  P \langle q \rangle
  \xrightarrow{}
  S \langle q \rangle
  \xrightarrow{}
  0
  \xrightarrow{}
  \cdots
\]
Each morphism of projective objects is induced by an arrow in $Q$.  We can now compute $R^1K_q( X ) = \Ext_Q^1( S \langle q \rangle,X )$ by using the projective resolution and Paragraph \ref{bfhpg:simple}(iii).  This gives the third formula in the lemma, and the fourth formula is proved similarly.
\end{proof}

\begin{proof}[Proof of Theorem \ref{thm:main2}]
Paragraph \ref{bfhpg:quivers} means that Theorems \ref{thm:self-injective_main} and \ref{thm:self-injective_complete} apply to the setup of Theorem \ref{thm:main2}.  The formulae for $\Phi( \cA )$ and $\Psi( \cB )$ in Theorem \ref{thm:self-injective_main} can be converted into the formulae in Theorem \ref{thm:main2}, part (i) by using Lemma \ref{lem:main2_functors}, and Theorem \ref{thm:self-injective_complete} implies Theorem \ref{thm:main2}, part (ii).
\end{proof}

\section{Proof of Theorem \ref{thm:main3}}
\label{sec:ZA_3}

Section \ref{sec:ZA_3} continues to use Setup \ref{set:self-injective}, except that:
\begin{itemize}

  \item  $Q$ is the repetitive quiver $\BZ A_3$ modulo the mesh relations, viewed as a $k$-preadditive category; see Section \ref{subsec:ZA_3}.

\end{itemize}
As in Section \ref{sec:main2} we think of objects of ${}_{ Q }\!\Mod$ and $\cX$ as quiver representations.  For $j \in \BZ$ there is an arrow $( j,0 ) \xrightarrow{} ( j,1 )$ in $Q$, so a corresponding homomorphism $X_{ (j,0) } \xrightarrow{} X_{ (j,1) }$ for each $X \in \cX$.  This and similar homomorphisms are used in the following two lemmas.

\begin{Lemma}
\label{lem:ZA_3_CjKj}
For $j \in \BZ$ and $X \in \cX$ we have:
\begin{enumerate}
\setlength\itemsep{4pt}

  \item The functors $C_q$ are given by
\begin{align*}
  C_{ (j,0) }( X ) & =
  \Coker( X_{ (j+1,1) } \xrightarrow{} X_{ (j,0) }  ), \\[1mm]
  C_{ (j,1) }( X ) & =
  \Coker( X_{ (j,0) } \oplus X_{ (j+1,2) } \xrightarrow{} X_{ (j,1) } ), \\[1mm]
  C_{ (j,2) }( X ) & =
  \Coker( X_{ (j,1) } \xrightarrow{} X_{ (j,2) } ).
\end{align*}

  \item  The functors $K_q$ are given by
\begin{align*}
  K_{ (j,0) }( X ) & =
  \Ker( X_{ (j,0) } \xrightarrow{} X_{ (j,1) } ), \\[1mm]
  K_{ (j,1) }( X ) & =
  \Ker( X_{ (j,1) } \xrightarrow{} X_{ (j-1,0) } \oplus X_{ (j,2) } ), \\[1mm]
  K_{ (j,2) }( X ) & =
  \Ker( X_{ (j,2) } \xrightarrow{} X_{ (j-1,1) } ).
\end{align*}

\end{enumerate}
\end{Lemma}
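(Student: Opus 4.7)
The plan is to extract both formulas directly from the defining adjunctions $(C_q, S_q)$ and $(S_q, K_q)$, using the concrete description of $S_q$: it sends $M \in {}_R\!\Mod$ to the representation $S_q(M)$ which has $M$ at vertex $q$ and $0$ at every other vertex, with all morphisms in $Q$ acting as zero between different vertices.

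For part (ii), I would start from the natural isomorphism $\Hom_\cX(S_q(M), X) \cong \Hom_\cM(M, K_q(X))$. A morphism $\eta: S_q(M) \to X$ in $\cX$ is a collection of $R$-linear maps $\eta_p: S_q(M)_p \to X_p$ compatible with the $Q$-action; since $S_q(M)_p = 0$ for $p \neq q$, the data reduces to a single $R$-linear map $\eta_q: M \to X_q$. The naturality condition applied to each arrow $\alpha: q \to p$ (with $p \neq q$) in $Q$ reads $X_\alpha \circ \eta_q = 0$, because $S_q(M)_\alpha = 0$; naturality for longer paths is then automatic (they factor through arrows), and naturality for scalar endomorphisms at each vertex is trivial. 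Consequently $\eta_q$ factors uniquely through $\Ker\bigl(X_q \to \bigoplus_\alpha X_p\bigr)$, where $\alpha$ ranges over the arrows out of $q$ in $Q$. Reading off the outgoing arrows at $(j,0)$, $(j,1)$, $(j,2)$ in $\BZ A_3$ and invoking uniqueness of the right adjoint then yields the three formulas for $K_q(X)$.

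Part (i) is dual: starting from $\Hom_\cX(X, S_q(M)) \cong \Hom_\cM(C_q(X), M)$, a morphism $\phi: X \to S_q(M)$ is determined by $\phi_q: X_q \to M$, and naturality with respect to each arrow $\alpha: p \to q$ (with $p \neq q$) demands $\phi_q \circ X_\alpha = 0$. So $\phi_q$ factors uniquely through $\Coker\bigl(\bigoplus_\alpha X_p \to X_q\bigr)$, where $\alpha$ ranges over the arrows into $q$. Listing the incoming arrows at $(j,0)$, $(j,1)$, $(j,2)$ and using uniqueness of the left adjoint produces the three $C_q$ formulas.

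The main obstacle, which is minor, is verifying that naturality for length-one arrows implies naturality for every morphism in $Q$; this reduces to observing that $Q$ is generated, as a $k$-linear category, by the identity morphisms and the arrows, so any morphism $p \to q$ with $p \neq q$ is a $k$-linear combination of compositions of arrows, and both the kernel and cokernel conditions are stable under such $k$-linear combinations and compositions. The mesh relations play no role in this argument beyond ensuring that compositions are well-defined morphisms in $Q$.
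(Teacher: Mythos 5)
Your proposal is correct and follows essentially the same route as the paper: the paper's proof simply records the concrete description of $S_q$ and notes that the stated kernel and cokernel formulas are easily verified to define right and left adjoints of $S_q$, hence coincide with $K_q$ and $C_q$ by uniqueness of adjoints. Your write-up just makes this verification explicit (a morphism into or out of $S_q(M)$ reduces to a single map at the vertex $q$ annihilated by the outgoing, respectively incoming, arrows), and your reading of the arrows of $\BZ A_3$ matches the formulas in the lemma.
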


\begin{proof}
The functor $S_q : \cM \xrightarrow{} \cX$ sends an object $M$ to an object $S_q( M )$ which has $M$ at vertex $q$ and $0$ at all the other vertices.  The formulae in the lemma are easily verified to define left and right adjoint functors to $S_q$, hence define $C_q$ and $K_q$.
\end{proof}

\begin{Lemma}
\label{lem:ZA_3_RiKj}
For $j \in \BZ$ and $X \in \cX$ we have:
\begin{align*}
  R^1K_{ (j,0) }( X ) & =
  \H( X_{ (j,0) } \xrightarrow{} X_{ (j,1) } \xrightarrow{} X_{ (j-1,0) } ), \\[1mm]
  R^1K_{ (j,1) }( X ) & =
  \H( X_{ (j,1) } \xrightarrow{} X_{ (j-1,0) } \oplus X_{ (j,2) } \xrightarrow{} X_{ (j-1,1) } ), \\[1mm]
  R^1K_{ (j,2) }( X ) & =
  \H( X_{ (j,2) } \xrightarrow{} X_{ (j-1,1) } \xrightarrow{} X_{ (j-1,2) } ).
\end{align*}
Here $\H$ denotes the homology of a three term chain complex, taken at the middle term.  The mesh relations imply that the arguments of $\H$ are indeed chain complexes.
\end{Lemma}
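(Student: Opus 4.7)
The approach mirrors that of Lemma \ref{lem:main2_functors}. By Lemma \ref{lem:self-injective_standard_isomorphisms}(ii), $R^1 K_q(X) \cong \Ext_Q^1(S\langle q\rangle, X)$, which can be computed from the first three terms of a minimal projective resolution $P_2 \xrightarrow{} P_1 \xrightarrow{} P_0 \xrightarrow{} S\langle q\rangle \xrightarrow{} 0$ in ${}_Q\Mod$, combined with the Yoneda identification $\Hom_Q(P\langle p\rangle, X) \cong X_p$ from Paragraph \ref{bfhpg:simple}(iii). The plan is to write down this resolution explicitly for each $q \in \{(j,0),(j,1),(j,2)\}$, translate via Yoneda, and read off the stated formulas.

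The projective cover is $P_0 = P\langle q\rangle$, and $P_1$ is determined by the standard fact that $\rad P\langle q\rangle / \rad^2 P\langle q\rangle$ has a $k$-basis indexed by the arrows out of $q$; thus $P_1$ equals $P\langle (j,1)\rangle$, $P\langle (j,2)\rangle \oplus P\langle (j-1,0)\rangle$, and $P\langle (j-1,1)\rangle$ in the three cases respectively, with the differential $P_1 \xrightarrow{} P_0$ induced by these outgoing arrows.

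The more delicate step is identifying $P_2$ and the map $P_2 \xrightarrow{} P_1$. I will compute $\ker(P_1 \xrightarrow{} P_0)$ directly from the description $P\langle p\rangle(r) = Q(p,r)$ and show that in each case it is generated at a single vertex, so its projective cover is a single indecomposable. For $q = (j,0)$ the bottom-edge relation $(j,0) \xrightarrow{} (j,1) \xrightarrow{} (j-1,0) = 0$ forces the image of the arrow $(j,1) \xrightarrow{} (j-1,0)$ in $P\langle (j,1)\rangle$ to lie in $\ker(P_1 \xrightarrow{} P_0)$, and a direct verification shows this element generates the kernel, giving $P_2 = P\langle (j-1,0)\rangle$; the case $q = (j,2)$ is symmetric via the top-edge relation $(j,2) \xrightarrow{} (j-1,1) \xrightarrow{} (j-1,2) = 0$. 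For $q = (j,1)$, the anticommutative mesh at $(j,1)$ produces the kernel element at vertex $(j-1,1)$, yielding $P_2 = P\langle (j-1,1)\rangle$ with map induced by the two arrows $(j,2) \xrightarrow{} (j-1,1)$ and $(j-1,0) \xrightarrow{} (j-1,1)$ taken with opposite signs reflecting the anticommutativity.

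Having identified the resolution, applying $\Hom_Q(-,X)$ converts $P_2 \xrightarrow{} P_1 \xrightarrow{} P_0$ into precisely the three-term sequences displayed in the lemma, and their middle homology is $\Ext_Q^1(S\langle q\rangle, X) = R^1 K_q(X)$. The assertion that these three-term sequences are in fact chain complexes follows automatically, since the composition $P_2 \xrightarrow{} P_1 \xrightarrow{} P_0$ encodes precisely the mesh relations at $q$ and hence vanishes after applying $\Hom_Q(-,X)$. The principal obstacle will be the explicit computation of $\ker(P_1 \xrightarrow{} P_0)$, which requires a vertex-by-vertex analysis of the supports of the relevant $P\langle p\rangle$ together with careful bookkeeping of the signs arising from the anticommutative square in the middle case $q=(j,1)$.
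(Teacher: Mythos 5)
Your proposal is correct and follows essentially the same route as the paper: identify the first three terms of the minimal projective resolution of each simple $S\langle q\rangle$ (with exactly the projectives $P\langle j-1,0\rangle \to P\langle j,1\rangle \to P\langle j,0\rangle$, $P\langle j-1,1\rangle \to P\langle j-1,0\rangle \oplus P\langle j,2\rangle \to P\langle j,1\rangle$, and $P\langle j-1,2\rangle \to P\langle j-1,1\rangle \to P\langle j,2\rangle$ you name), then apply $\Hom_Q(-,X)$ and the Yoneda identification of Paragraph \ref{bfhpg:simple}(iii) to compute $R^1K_q(X)=\Ext_Q^1(S\langle q\rangle,X)$ as the middle homology. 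The only cosmetic difference is that the paper determines the resolutions by writing out the indecomposable projectives as explicit representations, whereas you propose to verify the same syzygies by a direct kernel computation; both amount to the same check.
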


\begin{proof}
For readability, the simple object $S \langle (j,\ell) \rangle$ of ${}_{ Q }\!\Mod$ is denoted $S \langle j,\ell \rangle$.  It has $k$ at vertex $( j,\ell )$ and $0$ at every other vertex.  The indecomposable projective object $P \langle (j,\ell) \rangle$ of ${}_{ Q }\!\Mod$ is denoted $P \langle j,\ell \rangle$.  It is one of the following, where in each case, one of the vertices is identified by a superscript.
\begin{align*}
  P \langle j,0 \rangle
  & =
    \vcenter{
  \xymatrix @+0.5pc @!0 {
    & 0 \ar[dr] && 0 \ar[dr] && k \ar[dr] && 0 \ar[dr] && 0 \\
    \cdots && 0 \ar[dr] \ar[ur] && k \ar[dr] \ar^{ \id }[ur] && 0 \ar[dr] \ar[ur] && 0 \ar[dr] \ar[ur] && \cdots \\
    & 0 \ar[ur] && k^{ (j,0) } \ar^{ \id }[ur] && 0 \ar[ur] && 0 \ar[ur] && 0 \\
                        }
           } \\[1mm]
  P \langle j,1 \rangle
  & =
    \vcenter{
  \xymatrix @+0.5pc @!0 {
    & 0 \ar[dr] && 0 \ar[dr] && k \ar^{ \id }[dr] && 0 \ar[dr] && 0 \\
    \cdots && 0 \ar[dr] \ar[ur] && k^{ (j,1) } \ar_{ \id }[dr] \ar^{ \id }[ur] && k \ar[dr] \ar[ur] && 0 \ar[dr] \ar[ur] && \cdots \\
    & 0 \ar[ur] && 0 \ar[ur] && k \ar_{ -\id }[ur] && 0 \ar[ur] && 0 \\
                        }
           } \\[1mm]
  P \langle j,2 \rangle
  & =
    \vcenter{
  \xymatrix @+0.5pc @!0 {
    & 0 \ar[dr] && 0 \ar[dr] && k^{ (j,2) } \ar^{ \id }[dr] && 0 \ar[dr] && 0 \\
    \cdots && 0 \ar[dr] \ar[ur] && 0 \ar[dr] \ar[ur] && k \ar^{ \id }[dr] \ar[ur] && 0 \ar[dr] \ar[ur] && \cdots \\
    & 0 \ar[ur] && 0 \ar[ur] && 0 \ar[ur] && k \ar[ur] && 0 \\
                        }
           }           
\end{align*}
This permits to determine the minimal augmented projective resolutions of the simple objects in ${}_{ Q }\!\Mod$.  In each case, the first terms are the following.
\begin{align*}
  \cdots \xrightarrow{} P \langle j-1,0 \rangle \xrightarrow{} P \langle j,1 \rangle \xrightarrow{} P \langle j,0 \rangle & \xrightarrow{} S \langle j,0 \rangle \xrightarrow{} 0 \xrightarrow{} \cdots \\[1mm]
  \cdots \xrightarrow{} P \langle j-1,1 \rangle \xrightarrow{} P \langle j-1,0 \rangle \oplus P \langle j,2 \rangle \xrightarrow{} P \langle j,1 \rangle & \xrightarrow{} S \langle j,1 \rangle \xrightarrow{} 0 \xrightarrow{} \cdots \\[1mm]
  \cdots \xrightarrow{} P \langle j-1,2 \rangle \xrightarrow{} P \langle j-1,1 \rangle \xrightarrow{} P \langle j,2 \rangle & \xrightarrow{} S \langle j,2 \rangle \xrightarrow{} 0 \xrightarrow{} \cdots
\end{align*}
Each morphism of projective objects is induced by arrows in $Q$.  We can now compute $R^1K_{ (j,\ell) }( X ) = \Ext_Q^1( S \langle j,\ell \rangle,X )$ by using the projective resolutions and Paragraph \ref{bfhpg:simple}(iii), and this gives the formulae in the lemma.
\end{proof}

\begin{proof}[Proof of Theorem \ref{thm:main3}]
Paragraph \ref{bfhpg:quivers} means that Theorem \ref{thm:self-injective_main} applies to the setup of Theorem \ref{thm:main3}.  The formulae for $\Phi( \cA )$ and $\Psi( \cB )$ in Theorem \ref{thm:self-injective_main} can be converted into the formulae in Theorem \ref{thm:main3} by combining Definition \ref{def:Phi_Psi}, Proposition \ref{pro:self-injective_Ex}, and Lemmas \ref{lem:ZA_3_CjKj} and \ref{lem:ZA_3_RiKj}.
\end{proof}

\appendix

%
\numberwithin{equation}{section}
\renewcommand{\theequation}{\Alph{section}.\arabic{equation}}

\renewcommand{\thesubsection}{\Alph{section}.\Roman{subsection}}

\section{Compendium on functor categories}
\label{app:functor}

In this appendix, $k$, $Q$, and $R$ are as in Setup \ref{set:self-injective}:  $k$ is a field, $R$ a $k$-algebra, $Q$ a small $k$-preadditive category satisfying conditions (Fin), (Rad), and (SelfInj) of Definition \ref{def:FinRadSelfInj}.
The homomorphism functor and the radical of $Q$ will be denoted $Q( -,- )$ and $\rad_Q( -,- )$, see \cite[sec.\ A.3]{ASS}, \cite[sec.\ 3.2]{GabRoi}, and \cite[p.\ 303]{Kelly}.

The appendix explains some properties of the functor categories ${}_{ Q }\!\Mod$, $\Mod_Q$, ${}_{ Q,R }\!\Mod$, and ${}_{ Q }\!\Mod_Q$ from Paragraph \ref{bfhpg:functor_categories}, which are used extensively in Sections \ref{sec:self-injective} through \ref{sec:Seq}.  They share many properties of the module categories 
${}_{ \Lambda }\!\Mod$, $\Mod_{ \Lambda }$, ${}_{ \Lambda,R }\!\Mod$, and ${}_{ \Lambda }\!\Mod_{ \Lambda }$, where $\Lambda$ is a finite dimensional $k$-algebra.  This follows from conditions (Fin) and (Rad), which imply that $Q$ is a locally bounded spectroid in the terminology of \cite[secs.\ 3.5 and 8.3]{GabRoi}.  We can even think of $\Lambda$ as self-injective because condition (SelfInj) implies that projective, injective, and flat objects coincide in each of ${}_{ Q }\!\Mod$ and $\Mod_Q$, see Paragraph \ref{bfhpg:flat_and_injective}.
  
Note that each statement in the appendix for ${}_{ Q }\!\Mod$ has an analogue for $\Mod_Q$.

The appendix has been included because we do not have references for all the results we need on functor categories.  Some hold by \cite{GabRoi} as we shall point out along the way.  The rest follow by amending the proofs in the following references: \cite[chp.\ 1]{AusRepDim}, \cite[secs.\ 1-4]{AusRepI}, \cite[secs.\ 1 and 2]{AusRepII}, \cite[app.\ B]{JL}, \cite{OR}.

\begin{bfhpg}[\bf Hom and tensor functors]
\label{bfhpg:tensor_and_hom}
The following functors are used extensively in this paper.
\begin{enumerate}
\setlength\itemsep{4pt}

  \item  The homomorphism functor of ${}_{ Q }\!\Mod$ is
\[
  \Hom_{ Q }( -,- )
    : ({}_{ Q }\!\Mod)^{ \opp } \times {}_{ Q }\!\Mod \rightarrow 
        {}_{ k }\!\Mod.
\]
It is defined by $\Hom_Q( M,N )$ being the set of $k$-linear natural transformations $M \rightarrow N$ for $M,N \in {}_{ Q }\!\Mod$.    

\smallskip
\noindent
If $X \in {}_{ Q,R }\!\Mod$, then $X$ is a $k$-linear functor $Q \rightarrow {}_{ R }\!\Mod$, so $R$ acts on $\Hom_Q( M,X )$.  Hence $\Hom_Q$ can also be viewed as a functor
\[
  \Hom_{ Q }( -,- )
    : ({}_{ Q }\!\Mod)^{ \opp } \times {}_{ Q,R }\!\Mod \rightarrow 
        {}_{ R }\!\Mod.
\]

  \item  There are functors
\begin{align*}
  - \underset{Q}{\otimes} -
    & : \Mod_{ Q } \times \: {}_{ Q }\!\Mod \rightarrow {}_{ k }\!\Mod, \\[1mm]
  - \underset{Q}{\otimes} -
    & : \Mod_{ Q } \times \: {}_{ Q,R }\!\Mod \rightarrow {}_{ R }\!\Mod, \\[1mm]
  - \underset{Q}{\otimes} -
    & : {}_{ Q }\!\Mod_{ Q } \times \: {}_{ Q,R }\!\Mod \rightarrow {}_{ Q,R }\!\Mod,
\end{align*}
see \cite[p.\ 93]{OR}.  They are right exact in each variable, and the last of them satisfies
\begin{equation}
\label{equ:self-injective_identity_bimodule}
  Q( -,- ) \underset{ Q }{ \otimes } X \cong X
\end{equation}
naturally in $X \in {}_{ Q,R }\!\Mod$.  This makes sense because $Q( -,- )$ is an object of ${}_{ Q }\!\Mod_Q$.

  \item  There is a functor
\[
  \Hom_k( -,- )
    : (\Mod_{ Q })^{ \opp } \times {}_{ R }\!\Mod \rightarrow 
        {}_{ Q,R }\!\Mod
\]
defined by
\[
  \Hom_k( N,B )( q ) = \Hom_k\big( N( q ),B \big),
\]
where $\Hom_k$ on the right hand side is $\Hom$ of $k$-vector spaces.  It is exact in both variables.

  \item  There is a functor
\[
  - \underset{k}{\otimes} -
    : {}_{ Q }\!\Mod \times {}_{ R }\!\Mod \rightarrow 
        {}_{ Q,R }\!\Mod
\]
defined by
\[
  ( M \underset{k}{\otimes} B )( q )
  = M( q ) \underset{k}{\otimes} B,
\]
where $\underset{k}{\otimes}$ on the right hand side is tensor of $k$-vector spaces.  It is exact in both variables.

  \item  There is a functor
\[
  - \underset{k}{\otimes} -
    : \Mod_Q \times {}_{ Q }\!\Mod \rightarrow 
        {}_{ Q }\!\Mod_Q
\]
defined by
\[
  ( M \underset{k}{\otimes} N )( q',q'' )
  = M( q' ) \underset{k}{\otimes} N( q'' ),
\]
where $\underset{k}{\otimes}$ on the right hand side is tensor of $k$-vector spaces.  It is exact in both variables.  

  \item  We can view $\dual( - ) = \Hom_k( -,k )$ as a functor ${}_{ Q }\!\Mod \xrightarrow{ \dual } \Mod_Q$.  

\end{enumerate}
\end{bfhpg}

\begin{bfhpg}[\bf Standard isomorphisms]
\label{bfhpg:standard}
The functors from Paragraph \ref{bfhpg:tensor_and_hom} permit the following standard isomorphisms, among others.
\begin{enumerate}
\setlength\itemsep{4pt}

  \item  There is an adjunction isomorphism in ${}_{ k }\!\Mod$,
\[
  \Hom_{ Q,R } ( M \underset{k}{\otimes} B,X )
  \xrightarrow{ \cong }
  \Hom_R \big( B,\Hom_Q( M,X ) \big),
\]
natural in $M \in {}_{ Q }\!\Mod$, $B \in {}_{ R }\!\Mod$, $X \in {}_{ Q,R }\!\Mod$.

  \item  There is an adjunction isomorphism in ${}_{ k }\!\Mod$,
\[
  \Hom_R ( M \underset{Q}{\otimes} X,B )
  \xrightarrow{ \cong }
  \Hom_{ Q,R } \big( X,\Hom_k( M,B ) \big),
\]
natural in $M \in \Mod_Q$, $X \in {}_{ Q,R }\!\Mod$, $B \in {}_{ R }\!\Mod$.

  \item  There is an associativity isomorphism in ${}_{ R }\!\Mod$,
\[
  ( M \underset{Q}{\otimes} N ) \underset{k}{\otimes} B
  \xrightarrow{ \cong }
  M \underset{Q}{\otimes} ( N \underset{k}{\otimes} B ),
\]
natural in $M \in \Mod_Q$, $N \in {}_{ Q }\!\Mod$, $B \in {}_{ R }\!\Mod$, where $\underset{k}{\otimes}$ on the left hand side is tensor of $k$-vector spaces.

  \item  There is an associativity isomorphism in ${}_{ Q,R }\!\Mod$, 
\[
  ( M \underset{k}{\otimes} N ) \underset{Q}{\otimes} X
  \xrightarrow{ \cong }
  N \underset{k}{\otimes} ( M \underset{Q}{\otimes} X ),
\]
natural in $M \in \Mod_Q$, $N \in {}_{ Q }\!\Mod$, $X \in {}_{ Q,R }\!\Mod$.

  \item  There is a morphism in ${}_{ R }\!\Mod$,
\[
  \Hom_Q( M,N ) \underset{k}{\otimes} B
  \xrightarrow{}
  \Hom_Q( M,N \underset{k}{\otimes} B ),
\]
natural in $M,N \in {}_{ Q }\!\Mod$, $B \in {}_{ R }\!\Mod$.  It is an isomorphism if $M$ has finite length.  Note that $\underset{k}{\otimes}$ on the left hand side is tensor of $k$-vector spaces.

  \item  There is a morphism in ${}_{ Q,R }\!\Mod$,
\[
  M \underset{ k }{ \otimes } B
  \rightarrow
  \Hom_k ( \dual\!M,B ),
\]
natural in $M \in {}_{ Q }\!\Mod$, $B \in {}_{ R }\!\Mod$.  It is an isomorphism if $M$ has finite length. 

\end{enumerate}
\end{bfhpg}

\begin{bfhpg}[\bf Products and coproducts]
\label{bfhpg:product}
We will explain products and coproducts in ${}_{ Q }\!\Mod$.  What we say applies equally to $\Mod_Q$ and ${}_{ Q,R }\!\Mod$.

Let $\{ M_{ \alpha } \}$ be a family of objects of ${}_{ Q }\!\Mod$.  The product of the $M_{ \alpha }$ in ${}_{ Q }\!\Mod$ is given by
\[
  \Big( \prod_{ \alpha } M_{ \alpha } \Big)( - )
  = \prod_{ \alpha } M_{ \alpha }( - ),
\]
where the second $\prod$ is in ${}_{ k }\!\Mod$.  There is a similar formula for $\coprod$.  This implies that ${}_{ Q }\!\Mod$ inherits 
the following properties from ${}_{ k }\!\Mod$: It is complete and cocomplete, and products, coproducts, and filtered colimits preserve exact sequences.

Each of the tensor product functors from Paragraph \ref{bfhpg:tensor_and_hom} preserves coproducts in each variable.
\end{bfhpg}

\begin{bfhpg}[\bf Projective, injective, and simple objects]
\label{bfhpg:simple}
Each of the categories ${}_{ Q }\!\Mod$, $\Mod_Q$, ${}_{ Q,R }\!\Mod$, and ${}_{ Q }\!\Mod_Q$ has enough projective objects and enough injective objects.  We list some additional properties.
\begin{enumerate}
\setlength\itemsep{4pt}

  \item  By \cite[sec.\ 3.7]{GabRoi} we have the following:  For each $q \in Q$ there is an indecomposable projective object 
\[
  P \langle q \rangle = Q( q,- )
\]
in ${}_{ Q }\!\Mod$.  By conditions (Fin) and (Rad) of Definition \ref{def:FinRadSelfInj}, it has finite length and there is a unique maximal subobject $\fr P\langle q \rangle \subsetneq P \langle q \rangle$ given by $\fr P\langle q \rangle = \rad_Q( q,- )$.  The quotient
\[
  S \langle q \rangle
  = P \langle q \rangle / \fr P\langle q \rangle
\]
is a simple object in ${}_{ Q }\!\Mod$, which satisfies
\[
  S \langle q \rangle( p ) \cong 
  \left\{
    \begin{array}{cl}
      k & \mbox{ for } p=q, \\[1mm]
      0 & \mbox{ otherwise. }
    \end{array}
  \right.
\]
The simple objects of ${}_{ Q }\!\Mod$ are precisely the $S \langle q \rangle$ for $q \in Q$.  The simple objects of $\Mod_Q$ are precisely the duals $\dual\!S \langle q \rangle$ for $q \in Q$.  

  \item  By \cite[p.\ 85, exa.\ 2]{GabRoi} we have the following:  Each $M \in {}_{ Q }\!\Mod$ has an augmented projective resolution
\[
  \cdots
  \xrightarrow{ \partial_3 } P_2
  \xrightarrow{ \partial_2 } P_1
  \xrightarrow{ \partial_1 } P_0
  \xrightarrow{ \partial_0 } M
  \xrightarrow{} 0
  \xrightarrow{} \cdots,
\]
which can be constructed by choosing an epimorphism $P_0 \stackrel{ \partial_0 }{ \twoheadrightarrow } M$ with $P_0$ projective, then, when $\partial_{ i-1 }$ has been defined, choosing an epimorphism $P_i \twoheadrightarrow \Ker \partial_{ i-1 }$ and defining $\partial_i$ to be the composition $P_i \twoheadrightarrow \Ker \partial_{ i-1 } \hookrightarrow P_{ i-1 }$.  

\smallskip
\noindent
If $M$ has finite length, then condition (Fin) of Definition \ref{def:FinRadSelfInj} implies that each $P_i$ can be chosen as a coproduct of finitely many objects of the form $P \langle q \rangle$, and then each $P_i$ and each $\Ker \partial_i$ has finite length.
Moreover, by choosing each of the epimorphisms $P_0 \stackrel{ \partial_0 }{ \twoheadrightarrow } M$ and $P_i \twoheadrightarrow \Ker \partial_{ i-1 }$ as a projective cover, we can even make the resolution {\em minimal}, that is, if $i \geqslant 1$ then $\partial_i$ is in the radical of ${}_{ Q }\!\Mod$.  This implies that the functors $\dual\!S \langle q \rangle \underset{ Q }{ \otimes } -$ and $\Hom_Q( S \langle q \rangle,- )$ vanish on $\partial_i$.

  \item  A morphism $p \xrightarrow{ \pi } q$ in $Q$ induces a natural transformation $Q( q,- ) \rightarrow Q( p,- )$, that is, a morphism $P \langle q \rangle \rightarrow P \langle p \rangle$.  By Yoneda's Lemma, this in turn induces a commutative square
\[
\vcenter{
\xymatrix@-1.1pc
{
  \Hom_Q( P \langle p \rangle,X ) \ar[rr] \ar^{ \mbox{\rotatebox{90}{$\sim$}} }[dd] && \Hom_Q( P \langle q \rangle,X ) \ar[dd]^{ \mbox{\rotatebox{90}{$\sim$}} } \\
  \\
  X( p ) \ar_{ X( \pi ) }[rr] && X( q )\lefteqn{,}
}
        }
\]
natural in $X \in {}_{ Q,R }\!\Mod$, where the vertical arrows are isomorphisms.

  \item  If $X$ is a projective object of ${}_{ Q,R }\!\Mod$, then $X$ is projective when viewed as an object of ${}_{ Q }\!\Mod$.  

  \item  $M,N \in {}_{ Q }\!\Mod$ have finite length $\Rightarrow$ $\dim_k \Hom_Q( M,N ) < \infty$.

\end{enumerate}
\end{bfhpg}

\begin{bfhpg}[\bf $\Ext$ and $\Tor$ functors]
\label{bfhpg:Tor_and_Ext}
The functors $\Hom_Q$ and $\underset{ Q }{ \otimes }$ of Paragraph \ref{bfhpg:tensor_and_hom} have right and left derived functors,
\begin{align*}
  \Ext_{ Q }^i( -,- ) & :
    ({}_{ Q }\!\Mod)^{ \opp } \times {}_{ Q }\!\Mod 
    \rightarrow {}_{ k }\!\Mod, \\[1mm]
  \Tor^Q_i( - , - ) & :
    \Mod_{ Q } \times \: {}_{ Q }\!\Mod 
    \rightarrow {}_{ k }\!\Mod
\end{align*}
for $i \geqslant 0$.  Like $\Hom_Q$ and $\underset{ Q }{ \otimes }$ they can also be viewed as functors
\begin{align*}
  \Ext_{ Q }^i( -,- ) & :
  ({}_{ Q }\!\Mod)^{ \opp } \times {}_{ Q,R }\!\Mod 
  \rightarrow {}_{ R }\!\Mod, \\[1mm]
  \Tor^Q_i( -,- ) & :
  \Mod_{ Q } \times \: {}_{ Q,R }\!\Mod 
  \rightarrow {}_{ R }\!\Mod.
\end{align*}
We list some additional properties.
\begin{enumerate}
\setlength\itemsep{4pt}

  \item  Since products preserve exact sequences, there are isomorphisms in ${}_{ R }\!\Mod$,
\[
  \Ext_Q^i( N,\prod_{ \alpha }M_{ \alpha } )
  \xrightarrow{ \cong }
  \prod_{ \alpha } \Ext_Q^i( N,M_{ \alpha } ),
\]
natural in $N \in {}_{ Q }\!\Mod$ and $M_{ \alpha } \in {}_{ Q,R }\!\Mod$.  

  \item  The morphism in Paragraph \ref{bfhpg:standard}(v) induces standard morphisms in ${}_{ R }\!\Mod$,
\[
  \Ext_Q^i( M,N ) \underset{ k }{ \otimes } B
  \xrightarrow{}
  \Ext_Q^i( M,N \underset{ k }{ \otimes } B ),
\]
natural in $M,N \in {}_{ Q }\!\Mod$, $B \in {}_{ R }\!\Mod$.  They are isomorphisms if $M$ has finite length.

  \item  The isomorphism in Paragraph \ref{bfhpg:standard}(iii) induces standard isomorphisms in ${}_{ R }\!\Mod$,
\[
  \Tor^Q_i( M,N ) \underset{ k }{ \otimes } B
  \xrightarrow{ \cong }
  \Tor^Q_i( M,N \underset{ k }{ \otimes } B ),
\]
natural in $M \in \Mod_Q$, $N \in {}_{ Q }\!\Mod$, $B \in {}_{ R }\!\Mod$.

\end{enumerate}
\end{bfhpg}

\begin{bfhpg}[\bf Criteria for injectivity and flatness]
\label{bfhpg:flat_and_injective}
Condition (Fin) of Definition \ref{def:FinRadSelfInj} implies that $M \in {}_{ Q }\!\Mod$ satisfies
\begin{equation}
\label{equ:criterion_for_injectivity}
  \mbox{$M$ is injective $\;\Leftrightarrow\;$ If $q \in Q$ then $\Ext_Q^1( S \langle q \rangle,M ) = 0$.}
\end{equation}
Similarly, $M$ is {\em flat} if the functor $- \underset{ Q }{ \otimes } M$ is exact, and
\begin{equation}
\label{equ:criterion_for_flatness}
  \mbox{$M$ is flat $\;\Leftrightarrow\;$ If $q \in Q$ then $\Tor^Q_1( \dual\!S \langle q \rangle,M ) = 0$.}
\end{equation}
Conditions (Fin) and (SelfInj) of Definition \ref{def:FinRadSelfInj} imply that $M \in {}_{ Q }\!\Mod$ satisfies
\begin{equation}
\label{equ:flat_v_injective}
  \mbox{$M$ is projective $\;\Leftrightarrow\;$ $M$ is flat $\;\Leftrightarrow\;$ $M$ is injective.}
\end{equation}
\end{bfhpg}

\begin{bfhpg}[\bf Inverse limits]
\label{bfhpg:invlim}
We will explain inverse limits in ${}_{ Q }\!\Mod$.  What we say applies equally to $\Mod_Q$ and ${}_{ Q,R }\!\Mod$.  

Since products exist and preserve exact sequences, the results on (derived) inverse limits in \cite[sec.\ 3.5]{W} apply.  In particular, if there is an inverse system
\begin{equation}
\label{equ:appendix_inverse_system}
  \cdots \xrightarrow{} M^2 \xrightarrow{ \mu^2 } M^1 \xrightarrow{ \mu^1 } M^0,
\end{equation}
then there is an exact sequence
\[
  0
  \xrightarrow{} \invlim\, M^i
  \xrightarrow{} \prod_i M^i
  \xrightarrow{ \id - \operatorname{ shift } } \prod_i M^i
  \xrightarrow{} R^1\invlim\, M^i
  \xrightarrow{} 0,
\]
where $\id - \operatorname{ shift }$ is the difference between the identity morphism and the shift morphism induced by the $\mu^i$.

The inverse system is said to satisfy the {\em Mittag-Leffler condition} if, for each $i \geqslant 0$, the images of the maps $M^{ \ell } \rightarrow M^i$ for $\ell \geqslant i$ satisfy the descending chain condition.  In this case we have $R^1\invlim\, M^i = 0$.  This holds in particular if each morphism in \eqref{equ:appendix_inverse_system} is an epimorphism.

If there is a short exact sequence 
\[
\vcenter{
  \xymatrix @-0.98pc {
    \cdots \ar@{->>}[rr] && M'^2 \ar@{->>}[rr] \ar@{^(->}[dd] && M'^1 \ar@{->>}[rr] \ar@{^(->}[dd] && M'^0 \ar@{^(->}[dd] \\
    \\
    \cdots \ar[rr] && M^2 \ar[rr] \ar@{->>}[dd] && M^1 \ar[rr] \ar@{->>}[dd] && M^0 \ar@{->>}[dd] \\
    \\
    \cdots \ar[rr] && M''^2 \ar[rr] && M''^1 \ar[rr] && M''^0 \\
            }
        }
\]
of inverse systems, then there is an induced long exact sequence
\[
  0
  \xrightarrow{}
  \invlim\, M'_i
  \xrightarrow{}
  \invlim\, M_i
  \xrightarrow{}
  \invlim\, M''_i
  \xrightarrow{}
  R^1\invlim\, M'_i
  \xrightarrow{}
  R^1\invlim\, M_i
  \xrightarrow{}
  R^1\invlim\, M''_i
  \xrightarrow{} 0.
\]
If each morphism in the $M'$-system is an epimorphism, then $R^1\invlim\, M'_i = 0$ and there is a short exact sequence
\[
  0
  \xrightarrow{}
  \invlim\, M'_i
  \xrightarrow{}
  \invlim\, M_i
  \xrightarrow{}
  \invlim\, M''_i
  \xrightarrow{}
  0.
\]

Some of the results on inverse limits in \cite[sec.\ 6]{GT} also apply.  In particular, the Lukas Lemma says that if $N$ is fixed, then in order to conclude $\invlim\, M^i \in N^{ \perp }$, it is enough to verify the following for the inverse system \eqref{equ:appendix_inverse_system}, see \cite[lem.\ 6.37]{GT}.
\begin{enumerate}
\setlength\itemsep{4pt}

  \item  $\mu^i$ is an epimorphism for $i \geqslant 1$.
  
  \item  $M^0 \in N^{ \perp }$.
  
  \item  $\Ker \mu^i \in N^{ \perp }$ for $i \geqslant 1$.  

\end{enumerate}
\end{bfhpg}

\begin{bfhpg}[\bf The radical filtration]
\label{bfhpg:rad}
If $i \geqslant 0$ then the $i$'th power of the radical, $\rad_Q^i( -,- )$, is an object of ${}_{ Q }\!\Mod_Q$.  Because of condition (Rad) of Definition \ref{def:FinRadSelfInj}, there is a finite filtration in ${}_{ Q }\!\Mod_Q$, 
\begin{equation}
\label{equ:self-injective_radical_filtration}
  0 = \rad_Q^N \subsetneq \cdots \subsetneq \rad_Q^1 \subsetneq \rad_Q^0 = Q( -,- ),
\end{equation}
where $N \geqslant 0$ is chosen minimal such that $\rad_Q^N = 0$.  Each quotient $\rad_Q^i/\rad_Q^{ i-1 }$ is annihilated on both sides by $\rad_Q$, and this implies
\begin{equation}
\label{equ:self-injective_radical_quotients}
  \rad_Q^i/\rad_Q^{ i-1 }
  \cong
  \coprod_{ p,q \in Q }
    ( \dual\!S \langle p \rangle \underset{ k }{ \otimes } S \langle q \rangle )^{ n_i( p,q ) }
\end{equation}
for certain integers $n_i( p,q )$.  
\end{bfhpg}

\medskip
\noindent
{\bf Acknowledgement.}
We thank an anonymous referee for reading the paper carefully and making a number of useful suggestions.

We thank Jim Gillespie, Osamu Iyama, Berhard Keller, Sondre Kvamme, and Hiroyuki Minamoto for a number of illuminating comments, and Bernhard Keller for pointing out references \cite{GabRoi} and \cite{LP}.

This work was supported by EPSRC grant EP/P016014/1 ``Higher Dimensional Homological Algebra'' and LMS Scheme 4 Grant 41664.

\end{document}